\newcommand{\agot}{\mathfrak{a}}
\newcommand{\wneg}{\mathbf{W}}
\newtheorem{theorem}{Theorem}[section]% meant for sectionwise numbers
\newtheorem{proposition}[theorem]{Proposition}% 
\newtheorem{example}[theorem]{Example}%
\newtheorem{remark}[theorem]{Remark}%
\newtheorem{definition}[theorem]{Definition}%
\newtheorem{lemma}[theorem]{Lemma}
\newtheorem{corollary}[theorem]{Corollary}
\begin{document}

\title[Galois Connections and Preradicals in Abelian Categories]{Galois Connections and Preradicals in Abelian Categories}

%%=============================================================%%
%% GivenName	-> \fnm{Joergen W.}
%% Particle	-> \spfx{van der} -> surname prefix
%% FamilyName	-> \sur{Ploeg}
%% Suffix	-> \sfx{IV}
%% \author*[1,2]{\fnm{Joergen W.} \spfx{van der} \sur{Ploeg} 
%%  \sfx{IV}}\email{iauthor@gmail.com}
%%=============================================================%%

\author*[1]{\fnm{Rogelio} \sur{Fern\'andez-Alonso}}\email{rfg@xanum.uam.mx}
\equalcont{All authors contributed equally to this work.}

\author[2]{\fnm{Janeth} \sur{Maga\~na}}\email{jamz@azc.uam.mx}
\equalcont{All authors contributed equally to this work.}

\author[1]{\fnm{Martha Lizbeth Shaid} \sur{Sandoval-Miranda}}\email{marlisha@xanum.uam.mx}
\equalcont{All authors contributed equally to this work.}

\author[3]{\fnm{Valente} \sur{Santiago-Vargas}}\email{valente.santiago@ciencias.unam.mx}
\equalcont{All authors contributed equally to this work.}

\affil*[1]{\orgdiv{
Departamento de Matem\'aticas}, \orgname{Universidad Aut\'onoma Metropolitana - Iztapalapa}, \orgaddress{\street{Av. San Rafael Atlixco 186, Col. Vicentina Iztapalapa}, \city{Ciudad de M\'exico}, \postcode{09340}, \state{CDMX}, \country{M\'exido}
}}

\affil[2]{\orgdiv{Departamento de Ciencias B\'asicas CBI}, \orgname{Universidad Aut\'onoma Metropolitana - Azcapotzalco}, \orgaddress{\street{Av. San Pablo No. 180, Col. Reynosa Tamaulipas}, \city{Ciudad de M\'exico}, \postcode{02200}, \state{CDMX}, \country{M\'exico}}}

\affil[3]{\orgdiv{Departamento de Matem\'aticas}, \orgname{Facultad de Ciencias, Universidad Nacional Aut\'onoma de M\'exico}, \orgaddress{\street{Circuito Exterior, Ciudad Universitaria}, \city{Ciudad de M\'exico}, \postcode{04510}, \state{CDMX}, \country{M\'exico}}}

\abstract{In this paper we define operations of preradicals of any abelian category. We define idempotent preradicals and radicals. We prove that every adjoint pair between abelian categories induces a Galois connection between the corresponding ordered collections of preradicals. If the abelian categories are bicomplete, we construct alpha and omega preradicals and study their respective preservation under the Galois connection. If a bicomplete abelian category is in addition locally small, then the corresponding collection of preradicals is a complete lattice. The Galois connection induced by an adjoint pair between locally small bicomplete abelian categories preserves, respectively, idempotent preradicals and radicals.}

\keywords{Galois connections, adjoint pairs, preradicals}

\pacs[MSC Classification]{06A15, 18A40, 18E40, 18E05}

\maketitle
\section{Introduction}

In \cite{BJKN, BKN}, Bican, Kepka, N\~emec and Jambor established that every equivalence between categories of modules $R\mathrm{-Mod}$ and $S\mathrm{-Mod}$ induces an isomorphism between $R\mathrm{-pr}$ and
$S\mathrm{-pr}$, the corresponding lattices of preradicals. In \cite{FernandezMagana2015} this result is generalized to the fact that every adjoint pair between two categories of modules induces a Galois connection between the corresponding lattices of preradicals. In this paper, we step further in the generalization, proving in Section 4 our main result, that every adjoint pair $(F,G)$  between locally small abelian categories $\mathcal{A}$ and $\mathcal{B}$ induces a Galois connection $(\varphi,\psi)$ between the corresponding collections $Pr(\mathcal{A})$ and $Pr(\mathcal{B})$ of preradicals (Theorem \ref{thmgaloisconexion1}). As a consequence, we have the result described at the beginning of this introduction in the context of abelian categories: every equivalence induces an order isomorphism between the collections of preradicals. To achieve the main theorem and some properties of this Galois connection, we define in Section 3 both operations of preradicals, product and coproduct, in abelian categories, and we introduce idempotent preradicals and radicals. We also study preradicals in the opposite category $\mathcal{A}^{op}$ and define the duality assignment $\Delta$ between both collections of preradicals, which is an order anti-isomorphism that exchanges both operations and both types of preradicals, idempotent and radicals. This assignment will be very useful to prove dual statements along the paper. In Section 5 we study this Galois connection from the point of view of the opposite categories. In Section 6 we define and study alpha preradicals and omega preradicals, which are already known in the context of categories of modules, now in the context of bicomplete abelian categories. And we prove that, in this context, the Galois connection induced by an adjoint pair preserves both types of preradicals. In Section 7 we prove that if $\mathcal{A}$ is bicomplete and locally small $Pr(\mathcal{A})$ is a complete lattice. This is achieved by constructing the join and the meet of an arbitrary collection of preradicals. It is proven that the collection of idempotent preradicals is closed under arbitrary joins, and the collection of radicals is closed under arbitrary meets. Every preradical can be described as a join of alpha preradicals, and every radical can be described as a meet of radicals. Also idempotent preradicals and radicals are characterized in terms of alpha and omega preradicals, respectively. This allows us to prove another important result: in the Galois connection $(\varphi,\psi)$ induced by an adjoint pair, $\varphi$ preserves idempotent preradicals and $\psi$ preserves radicals (Theorem \ref{galradidem}). Finally, in Section 8 we give two examples, where the lattice of idempotent radicals is described completely. In the first one we consider the category of left $\Lambda$-modules, where $\Lambda$ is a path algebra. In the second one, it is considered the category of torsion abelian groups.

 \section[preliminaries]{Preliminaries}\label{sec2}

\subsection[]{Galois connections}

In this section we present some basic concepts and results about Galois connections on posets. These can be extended to partially ordered classes and conglomerates. For more details, see \cite{Erne}.

Let $(P,\leq)$ and $(Q,\preceq)$ be posets. A Galois connection between posets $(P,\leq)$ and $(Q,\preceq)$ consists of two maps $f:P\longrightarrow Q$ and $g:Q\longrightarrow P$ such that for all $p\in P$ and $q\in Q$:
$$p \leq  g(q) \Longleftrightarrow  f(p)\preceq q.$$

A Galois connection will be denoted by the quadruple $\langle P, f, g,  Q\rangle$, or if it is clear, by the pair of maps $\langle f, g\rangle.$ The maps $f$ and $g$ are called, respectively, the $\textbf{coadjoint part}$ and the $\textbf{adjoint part}$. The following properties hold:
\begin{enumerate}
\item $f$ and $g$ preserve order.

\item $g\circ f$ is inflationary, i.e., for each $p\in P$, $(g\circ f)(p) \geq  p$.

\item $f\circ g$ is deflationary, i.e., for each $q\in Q$, $(f\circ g)(q)\preceq q$.

\item $f$ and $g$ are quasi-inverse, that is, $f\circ g\circ f=f$ and $g\circ f\circ g= g.$

\item If $P$ and $Q$ are complete lattices with least elements $0_{P}$ and $0_{Q}$, respectively, and greatest elements $1_{P}$ and $1_{Q}$, respectively, then $f$ preserves arbitrary joins, $g$ preserves
arbitrary meets, $f(0_{P})=0_{Q}$ and $g(1_{Q})=1_{P}$.

\item For each $p\in P$ , $(g\circ f)(p)=p \Longleftrightarrow  p\in  g(Q)$, thus $(g\circ f)(P)=g(Q)$.

\item  For each $q\in Q$, $(f\circ g)(q)=q \Longleftrightarrow q\in f(P)$, thus $(f\circ g)(Q)=f(P)$.

\end{enumerate}

In fact, posets $P$ and $Q$, and maps $f:P\longrightarrow Q$ and $g:Q\longrightarrow P$ constitute a Galois connection if, and only if, properties $1-3$ hold.
If $(P,\leq )$ is a poset, a map $h:P\longrightarrow P$ is a $\textbf{closure}$ ($\textbf{interior}$) $\textbf{operator}$ if
it preserves order, it is idempotent and inflationary (deflationary). A closure (interior) system of $P$ is a subset $Q$ of $P$ such that for each $p\in P$ there is a least (greatest) element $q\in Q$
such that $p\leq q$ ($p\geq q$). The element $q$ is called the closure (interior) of $p$. Any closure
(interior) system corresponds naturally to a closure (interior) operator, and conversely. If $h$ is a closure (interior) operator, the elements of $\mathrm{Im}(h)$ are called the $\textbf{closed}$ ($\textbf{open}$) elements of $P$. Thus properties $1-3$ imply that if $\langle P, f, g,  Q\rangle$ is a Galois connection then $g\circ f$ is a closure operator in $P$ and $f\circ g$ is a interior operator in $Q$. Therefore $g(Q)$ is the set of closed elements in $P$ and $f(P)$ is the set of open elements in $Q$. This Galois connection induces a poset isomorphism between $g(Q)$ and $f(P).$

\subsection[]{Subobjects, functors and adjoint pairs}
Let  $\mathcal{C}$ be a category and $X\in \mathcal{C}$,  the $\textbf{monomorphism category}$ associated to $X$, is the category $M(X)$ whose objects are monomorphisms in $\mathcal{C}$ with codomain $X$. Given two objects $u_{X'}:X'\rightarrow X$ and $u_{X''}:X''\rightarrow X$ in $M(X)$,  a morphism from $u_{X'}$ to $u_{X''}$ is a morphism $\alpha:X'\rightarrow X''$ in $\mathcal{C}$, such that $u_{X''}\circ \alpha=u_{X'}$. In this case we will write $u_{X'}\leq u_{X''}$. This defines a preorder on $M(X)$, which induces an equivalence relation $\simeq$, i.e., $u_{X'}\simeq u_{X''}$ if and only if $u_{X'}\leq u_{X''}$ and $u_{X''}\leq u_{X'}$. A $\textbf{subobject}$ of $X$ is an equivalence class of a monomorphism $u_{X'}:X'\rightarrow X$. A category $\mathcal{C}$ is said to be $\textbf{locally small}$ if the class of subobjects of any given object is actually a set. The preorder $\leq$ on $M(X)$ induces a partial order, which we denote with the same symbol, on the class of subobjects of $X$. For simplicity, we will refer to a subobject of $X$ by writing $u_{X'}:X'\longrightarrow X$,  understanding that we are in fact considering the equivalence class of $u_{X'}$. Therefore, we write $u_{X'}\leq u_{X''}$ when the corresponding subobjects are related by the partial order induced by $\leq$. Dually, we can define the epimorphism category associated to an object of $\mathcal{C}$, and a quotient object as an equivalence class of an epimorphism.\\

We recall that a category $\mathcal{C}$ is said to be $\textbf{small}$ if the class of objects is a set. Let $\mathcal{A}$ be a small category and $\mathcal{B}$ an arbitrary category. We denote by $\mathrm{Fun}(\mathcal{A},\mathcal{B})$ the collection of all covariant functors $F:\mathcal{A}\longrightarrow \mathcal{B}$. It is well-known that in this case $\mathrm{Fun}(\mathcal{A},\mathcal{B})$ is a category whose objects are the  covariant functors $F:\mathcal{A}\longrightarrow \mathcal{B}$  and a morphism  between two functors $F,G:\mathcal{A}\longrightarrow \mathcal{B}$ is just a natural transformation $\eta:F\longrightarrow G.$ If $\mathcal{A}$ is not a small category, then $\mathrm{Fun}(\mathcal{A},\mathcal{B})$ is an \textit{extended} category, in the sense that for any pair of such functors, the collection of all natural transformations between them is not necessarily a set. Throughout this paper when we refer to a category we mean a locally small category (not necessarily small).\\

We also recall two type of natural transformations which result from the composition between functors and natural transformations. In fact, they are result of the $\textbf{star composition}$ of any natural transformation and the natural identity on any functor, which for simplicity is denoted with the same symbol as the functor. Let $F,G:\mathcal{A}\longrightarrow \mathcal{B}$ be functors and $\eta:F\longrightarrow G$ a natural transformation. 
\begin{enumerate}
\item If $E:\mathcal{C}\longrightarrow \mathcal{A}$ is a funtor, we have a natural transformation $\eta E:F\circ E\longrightarrow G\circ E$ defined as follows: $[\eta E]_{C}:=\eta_{E(C)}:F(E(C))\longrightarrow G(E(C))$ for all $C\in \mathcal{C}$.

\item If $H:\mathcal{B}\longrightarrow \mathcal{C}$ is a functor, we have a natural transformation $H\eta:H\circ F\longrightarrow H\circ G$ defined as follows: $[H\eta]_{A}:=H(\eta_{A}):H(F(A))\longrightarrow H(G(A))$ for all $A\in \mathcal{A}$.
\end{enumerate}
For more details about these natural transformations see \cite[II.5]{MacLane1978} in p. 42.\\
Consider functors $F:\mathcal{A}\rightarrow\mathcal{B}$ and $G:\mathcal{B}\rightarrow\mathcal{A}$. We say that $F$ is $\textbf{left adjoint}$ to 
$G$ or that $G$ is $\textbf{right adjoint}$ to $F$, and  that $(F,G)$ is an adjoint pair, if there is a natural isomorphism
$$\varphi =\Big\{\varphi_{A,B}:\mathrm{Hom}_{\mathcal{B}}(FA,B)\rightarrow \mathrm{Hom}_{\mathcal {A}}(A,GB)\Big\}_{A\in\mathcal{A},B\in\mathcal{B}}$$
between the bifunctors $\mathrm{Hom}_{\mathcal{B}}(F(-),-)$ and $\mathrm{Hom}_{\mathcal{A}}(-,G(-))$.\\
Being $F$ left adjoint to $G$ is equivalent to the existence of a natural transformation $\eta:1_{\mathcal{A}}\longrightarrow GF$, called the $\textbf{unit}$, and a natural transformation $\varepsilon: FG\longrightarrow 1_{\mathcal{B}}$, called the $\textbf{counit}$, such that
 the so called $\textbf{triangular identities}$ hold:
 $$(G\varepsilon)\circ (\eta G)=1_{G},\quad \quad (\varepsilon F)\circ (F\eta)=1_{F}$$

\section[product and coproduct of preradicals]{Product and coproduct of preradicals in abelian categories}
%{\color{blue}[Idea de Rogelio]}
Throughout this paper we will work with abelian categories. For more details the reader can see \cite{MitBook} or \cite[Chapter IV]{Stentrom}. Let $\mathcal{A}$ be an abelian category. Then $\mathrm{Fun}(\mathcal{A},\mathcal{A})$ is an (extended) abelian category. A $\textbf{preradical}$ of $\mathcal{A}$ is a subobject of the identity functor $1_{\mathcal{A}}:\mathcal{A}\longrightarrow \mathcal{A}$ in $\mathrm{Fun}(\mathcal{A},\mathcal{A})$, i.e., it is the equivalence class of a monomorphism $\xymatrix{\tau: T\  \ar@{^(->}[r] & 1_{\mathcal{A}}}$ (which is thus a natural transformation) in $\mathrm{Fun}(\mathcal{A},\mathcal{A})$. Notice that in this case,  $\xymatrix{\tau_{A}: T(A)\ \ar@{^(->}[r] & A}$ is a monomorphism in $\mathcal{A}$ for all $A\in \mathcal{A}$. For simplicity, as it is explained in the previous section, when we refer in this paper to a preradical of $\mathcal{A}$ we will just write $\xymatrix{\tau: T\  \ar@{^(->}[r] & 1_{\mathcal{A}}}$ meaning the equivalence class of $\tau.$ Sometimes we may refer to any representative of a preradical $\tau$. Given an abelian category $\mathcal{A}$, we will denote by $Pr(\mathcal{A})$ the collection (not necessatily a set) of all preradicals of $\mathcal{A}$.\\

Recall (see \cite{FernandezRaggi}) that a preradical $T$ in a ring $R$ is a subfunctor of the identity functor on the category $R$-Mod of left $R$-modules. That is, we have the inclusion $\xymatrix{\iota: T\  \ar@{^(->}[r] & 1_{R\text{-}\mathrm{Mod}}}$ as a natural transformation. The collection of all preradicals in $R$ is denoted by $R$-pr. A reasonable question is if both collections of preradicals, $R$-pr and $Pr(R\text{-}\mathrm{Mod})$ coincide, considering the abelian category $R$-Mod.  \\

\begin{proposition}\label{preradiso}
    For any ring $R$, the ordered collections $R$-pr and $Pr(R\text{-}\mathrm{Mod})$ are isomorphic.
\end{proposition}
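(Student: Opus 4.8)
The plan is to exhibit two mutually inverse, order-preserving assignments
$$\Phi\colon R\text{-pr}\longrightarrow Pr(R\text{-}\mathrm{Mod}),\qquad \Psi\colon Pr(R\text{-}\mathrm{Mod})\longrightarrow R\text{-pr}.$$
The point of the statement is that the two definitions differ only formally: a classical preradical is an honest subfunctor of $1_{R\text{-}\mathrm{Mod}}$, so that $T(M)$ is a genuine submodule of $M$, whereas an element of $Pr(R\text{-}\mathrm{Mod})$ is an equivalence class of monomorphisms $\tau\colon T\hookrightarrow 1_{R\text{-}\mathrm{Mod}}$ in the functor category. In $R$-Mod every monomorphism is equivalent, as a subobject, to the inclusion of its image, and this is what makes the two notions coincide. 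To define $\Phi$, given a subfunctor $T$ with inclusion $\iota\colon T\hookrightarrow 1_{R\text{-}\mathrm{Mod}}$, each component $\iota_{M}$ is a monomorphism in $R$-Mod; since monomorphisms in $\mathrm{Fun}(R\text{-}\mathrm{Mod},R\text{-}\mathrm{Mod})$ are computed pointwise, $\iota$ is a monomorphism there and determines a subobject $[\iota]$ of $1_{R\text{-}\mathrm{Mod}}$, and I set $\Phi(T)=[\iota]$. To define $\Psi$, given a subobject represented by $\tau\colon T\hookrightarrow 1_{R\text{-}\mathrm{Mod}}$, I set $\Psi([\tau])(M):=\mathrm{Im}(\tau_{M})\subseteq M$, the image submodule.

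Next I would check that $\Psi$ is well defined. The assignment $M\mapsto \mathrm{Im}(\tau_{M})$ is a subfunctor: for $f\colon M\to N$, naturality of $\tau$ gives $f\circ\tau_{M}=\tau_{N}\circ T(f)$, hence $f(\mathrm{Im}(\tau_{M}))=\tau_{N}\big(T(f)(T(M))\big)\subseteq \tau_{N}(T(N))=\mathrm{Im}(\tau_{N})$, so $f$ restricts to a map $\Psi([\tau])(M)\to\Psi([\tau])(N)$, and functoriality is then immediate. Moreover $\Psi$ is independent of the chosen representative: if $\tau\simeq\tau'$ then, for each $M$, the monomorphisms $\tau_{M}$ and $\tau'_{M}$ represent the same subobject of $M$ and therefore have equal image, so $\Psi([\tau])=\Psi([\tau'])$.

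Then I would verify that $\Phi$ and $\Psi$ are mutually inverse. The equality $\Psi\circ\Phi=\mathrm{id}$ is immediate, since for a subfunctor $T$ the component $\iota_{M}$ is the literal inclusion, whence $\mathrm{Im}(\iota_{M})=T(M)$. For $\Phi\circ\Psi=\mathrm{id}$ I would use the epi--mono factorization in the abelian category $R$-Mod: each $\tau_{M}$ factors as $T(M)\twoheadrightarrow \mathrm{Im}(\tau_{M})\hookrightarrow M$, and since $\tau_{M}$ is a monomorphism the epimorphism $T(M)\to\mathrm{Im}(\tau_{M})$ is an isomorphism. These isomorphisms assemble into an isomorphism of functors $T\xrightarrow{\ \sim\ }\Psi([\tau])$ compatible with the inclusions into $1_{R\text{-}\mathrm{Mod}}$, so $\tau\simeq j$, where $j$ is the inclusion of the image subfunctor; hence $[\tau]=[j]=\Phi(\Psi([\tau]))$.

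Finally, for order preservation: in $R$-pr one has $T\leq T'$ iff $T(M)\subseteq T'(M)$ for all $M$, while in $Pr(R\text{-}\mathrm{Mod})$ one has $[\tau]\leq[\tau']$ iff $\tau$ factors through $\tau'$. Using the image description, such a factorization exists exactly when $\mathrm{Im}(\tau_{M})\subseteq \mathrm{Im}(\tau'_{M})$ for every $M$ (the pointwise factorizations through the monomorphism $\tau'_{M}$ are unique, and a short diagram chase shows they are automatically natural), so $\Phi$ and $\Psi$ carry one order onto the other. I expect the only genuinely substantive steps to be the well-definedness of $\Psi$ -- namely that the pointwise image is a subfunctor and is representative-independent -- together with the factorization argument for $\Phi\circ\Psi=\mathrm{id}$; the remaining verifications are formal.
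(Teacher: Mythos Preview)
Your proof is correct and follows essentially the same approach as the paper: both send a classical preradical $T$ to the equivalence class of its inclusion $\iota_T$, and both recover a classical preradical from an abstract one $\tau$ by taking pointwise images $M\mapsto\mathrm{Im}(\tau_M)$, using that $\tau_M$ is a monomorphism so that the corestriction to the image is an isomorphism. Your presentation is slightly more systematic (you explicitly build the inverse map $\Psi$ and verify both compositions and both directions of order preservation), whereas the paper argues injectivity and surjectivity of the single map $S\mapsto\iota_S$ separately, but the content is the same.
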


\begin{proof}
    Let $T,S\in R$-pr, and consider the natural inclusions $\xymatrix{\iota_T: T\  \ar@{^(->}[r] & 1_{R\text{-}\mathrm{Mod}}}$ and $\xymatrix{\iota_S: S\  \ar@{^(->}[r] & 1_{R\text{-}\mathrm{Mod}}}$. These inclusions represent subobjects, i.e., preradicals, in the abelian category $R$-Mod, which we denote in the same manner, $\iota_T, \iota_S\in Pr(R\text{-}\mathrm{Mod})$. Suppose that $\iota_S\leq\iota_T$. Then there exists a natural transformation $f:S\to T$ such that $\iota_T\circ f=\iota_S$. Therefore, for all $M\in R$-Mod, if $x\in S(M)$ then $x=f_M(x)\in T(M)$. Thus, $S(M)\leq T(M)$, and we have $S\leq T$. We conclude that if $\iota_S, \iota_T$ represent the same preradical in $Pr(R\text{-}\mathrm{Mod})$ then $S=T$.\\

    On the other hand, if $\xymatrix{\sigma: S\  \ar@{^(->}[r] & 1_{R\text{-}\mathrm{Mod}}}$ is any preradical in $Pr(R\text{-}\mathrm{Mod})$, we define the preradical $\sigma': \text{Im }\sigma\to 1_{R\text{-}\mathrm{Mod}}$ as follows: for any $M\in R\text{-Mod}$, $\sigma'_M:S'M\hookrightarrow M$ is the inclusion, where $S'M:=\text{Im }\sigma_M$. Then $S'$ is a preradical in $R$ and for each $M\in R\text{-Mod}$ we have the epimorphism $\eta_M:SM\to S'M$ which is the correstriction of $\sigma_M$ to its image, such that $\sigma_M=\sigma'_M\circ\eta_M$. Since $\sigma_M$ is a monomorphism then $\eta_M$ is also a monomorphism, and thus an isomorphism. Therefore $\sigma\cong\sigma'$, i.e., both represent the same preradical in $Pr(R\text{-}\mathrm{Mod})$.\\

    Therefore we can define an assignment $\varphi:R\text{-}pr\rightarrow Pr(R\text{-}\mathrm{Mod})$ such that $\varphi: S\mapsto \iota_S$, which, by the previous paragraphs, preserves order and is bijective. 
    
\end{proof}

As it is known, in any abelian category $\mathcal{A}$, each morphism $f:A\longrightarrow B$ has a kernel and a cokernel, which are unique, up to isomorphism, satisfying the corresponding property. In fact, we may refer to the $\textbf{subobject}$ $\mathrm{Ker}(f)$ of $A$, which is the equivalence class, and consists of all kernels of $f$. We may denote one of its representatives as a monomorphism $\mathrm{Kr}(f)\longrightarrow A$. We may also refer to the $\textbf{quotient object}$ $\mathrm{Coker}(f)$ of $B$, which consists of all cokernels of $f$. Similarly, we may denote one of its representatives as an epimorphism $B\longrightarrow\mathrm{Ckr}(f)$. Here $\mathrm{Kr}(f)$ and $\mathrm{Ckr}(f)$ denote objects of $\mathcal{A}$. Conversely, recall that any monomorphism of an abelian category $\mathcal{A}$ is a kernel, namely, a kernel of any of its cokernels, so if $u$ denotes a subobject then we may write $u=\mathrm{Ker}(\mathrm{Coker}(u))$. Similarly, if $v$ is any quotient object, we may write $v=\mathrm{Coker}(\mathrm{Ker}(v))$. In particular, being a subobject of the identity functor, any preradical $\sigma$ is such that $\sigma=\mathrm{Ker}(\mathrm{Coker}(\sigma))$. In fact, $\sigma$ determines a $\textbf{SQ-sequence}$ (for subobject-quotient object sequence) in $\mathcal{A}$:

\centerline{$\xymatrix{ S\ar@{^(->}[r]^{\sigma} & 1_{\mathcal{A}}\ar@{->>}[r]^{\sigma^*} & S^*}$}

\noindent which we may define as such that (1) $\sigma$ is a subobject, (2) $\sigma^*$ is a quotient object, (3) $\sigma^*= \mathrm{Coker}(\sigma)$, and (4) $\sigma=\mathrm{Ker}(\sigma^*)$. Of course we may also consider this as a short exact sequence in $\mathrm{Fun}(\mathcal{A},\mathcal{A})$, interpreting each one of the morphisms as representative of its equivalence class. Notice that, in this exact sequence, $\sigma_1$ and $\sigma_2$ are both representatives of the same subobject if, and only if, $\sigma^*_1$ and $\sigma^*_2$ are both representatives of the same quotient object.\\

Now we define both operations in $Pr(\mathcal{A})$, as well as idempotent preradicals and radicals. First notice that for any two preradicals $\xymatrix{\tau:T\ar@{^(->}[r] & 1_{\mathcal{A}}}$ and $\xymatrix{\sigma:S\ar@{^(->}[r] & 1_{\mathcal{A}}}$ of $\mathcal{A}$, we have the following commutative diagram in $\mathrm{Fun}(\mathcal{A},\mathcal{A})$:

\begin{equation}\label{SQdiagram}
\begin{aligned}
\xymatrix{ & 0\ar[d] & 0\ar[d] & 0\ar[d]\\
& TS\ar[d]_{\tau S}\ar[r]^{T\sigma} & T\ar[d]^{\tau}\ar[r]^{T\sigma^*}  & TS^*\ar[d]^{\tau S^*} &\\
0\ar[r] & S\ar[r]^{\sigma}\ar[d]_{\tau^* S} & 1_{\mathcal{A}}\ar[r]^{\sigma^*}\ar[d]^{\tau^*} &  S^*\ar[r]\ar[d]^{\tau^* S^*} & 0\\
& T^*S\ar[d]\ar[r]_{T^*\sigma} & T^*\ar[d]\ar[r]_{T^*\sigma^*} & T^*S^*\ar[d] &\\
 & 0 & 0 & 0}
 \end{aligned}
\end{equation}

The upper squares are commutative by the naturality of any representative of $\tau$; the lower squares commute by the naturality of any representative of $\tau^*$. The columns of the diagram are short exact sequences corresponding to the SQ-sequence determined by $\tau$. The middle row is the short exact sequence corresponding to the SQ-sequence determined by $\sigma$.
\vspace{.2in}

\begin{definition}\label{product}
Let  $\xymatrix{\tau: T\ar@{^(->}[r] & 1_{\mathcal{A}}}$ and $\xymatrix{\sigma: S\ar@{^(->}[r] & 1_{\mathcal{A}}}$ be preradicals. Considering the left upper square of diagram \eqref{SQdiagram}, we define the $\textbf{product}$ of $\tau$ and $\sigma$, denoted by $\tau\cdot \sigma$, as the equivalence class of $\sigma \circ \tau S=\tau\circ T\sigma$.
\end{definition}
\vspace{.2in}

Notice that this product is well defined, because if $\sigma, \sigma'$ are representatives of the same preradical, and $\tau, \tau'$ are representatives of another preradical, i.e., if $\sigma\cong\sigma'$ and $\tau\cong \tau'$, then $\sigma \circ \tau S \cong \sigma' \circ \tau' S $, i.e., this composition represents the same preradical $\tau\cdot \sigma$. In particular, for any preradical $\xymatrix{\tau: T\ar@{^(->}[r] & 1_{\mathcal{A}}}$, the commutative diagram involving a representative of $\tau$ that defines the product $\tau\cdot \tau$ is:

\begin{equation}\label{idem}
\begin{aligned}
\xymatrix{T^{2}\ar[r]^{\tau T}\ar[d]_{T\tau} & T\ar[d]^{\tau}\\
T\ar[r]_{\tau} & 1_{\mathcal{A}}}
\end{aligned}
\end{equation}

Notice that, since the representative of $\tau$ is a monomorphism, we have that $T\tau=\tau T.$
\vspace{.2in}

\begin{definition}\label{defiidempoten}
Let $\xymatrix{\tau: T\ar@{^(->}[r] & 1_{\mathcal{A}}}$ be a preradical. We say that $\tau$ is $\textbf{idempotent}$ if $\tau\cdot \tau = \tau$.
\end{definition}
\vspace{.2in}

\begin{remark}
For any preradical $\xymatrix{\tau: T\ar@{^(->}[r] & 1_{\mathcal{A}}}$, $\tau$ is idempotent if, and only if, $T\tau=\tau T:T^{2}\longrightarrow T$ is an isomorphism (see diagram \eqref{idem}). 
\end{remark}
\vspace{.2in}
There is some similarity between Definition \ref{product} and the definition of product of preradicals on modules. in the same manner, considering the definition of coproduct of preradicals on modules, we give the following definition.
\vspace{.2in}

\begin{definition}\label{coproduct}
Let $\xymatrix{\tau:T\ar@{^(->}[r] & 1_{\mathcal{A}}}$ and $\xymatrix{\sigma:S\ar@{^(->}[r] & 1_{\mathcal{A}}}$ be preradicals. We define the $\textbf{coproduct}$ of $\sigma$ and $\tau$, denoted by $(\sigma : \tau)$, as the preradical whose representative is part of the pullback diagram in $\mathrm{Fun}(\mathcal{A},\mathcal{A})$:

\centerline{$\xymatrix{(S:T)\ar@{^(->}[r]^(.6){  (\sigma:\tau)}\ar[d]^{} & 1_{\mathcal{A}}\ar@{->>}[d]^{\sigma^*} & \\
TS^*\ar@{^(->}[r]^{\tau S^*} & S^*}$}
\end{definition}
\vspace{.2in}

Again, this coproduct is well defined, because if $\sigma_1\cong\sigma_2$ and $\tau_1\cong\tau_2$ then the monomorphism that completes the pullback represents the same preradical. We shall give an alternative definition, using the SQ-sequence determined by any preradical in an abelian category. We recall first the following categorical property.
\vspace{.2in}

\begin{lemma}\label{lemkerpull}
Consider the following diagram in a category with zero object
$$\xymatrix{A'\ar[r]^{\beta} & A\ar[d]^{\alpha_{1}} & \\
B'\ar[r]^{\alpha_{2}} & B\ar[r]^{\alpha_{3}} & B''}$$
where $\alpha_{2}$ is a kernel of $\alpha_{3}$. Then the above diagram can be completed to a pullback if, and only if, $\beta$ is a kernel of $\alpha_{3}\alpha_{1}$. 
\end{lemma}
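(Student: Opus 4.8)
=== PROOF PROPOSAL ===

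\noindent\textbf{Proof proposal.} The plan is to prove both implications of the equivalence separately, exploiting the universal property of the pullback in one direction and the universal property of the kernel in the other. Throughout I work in the given category with zero object, and I write $\gamma := \alpha_3\alpha_1$ for the composite along the top-right path, so that the claim is precisely: the square on $A', A, B', B$ completes to a pullback $\iff$ $\beta$ is a kernel of $\gamma$.

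\noindent First I would treat the direction ($\Leftarrow$): assume $\beta$ is a kernel of $\gamma = \alpha_3\alpha_1$ and show the square is a pullback. Since $\alpha_2$ is a kernel of $\alpha_3$, the composite $\alpha_3\alpha_2 = 0$. Given any object $X$ with maps $f : X \to A$ and $g : X \to B'$ satisfying $\alpha_1 f = \alpha_2 g$, I must produce a unique $h : X \to A'$ with $\beta h = f$ (and the second triangle $\,?\,h = g$, where the left vertical map is whatever morphism $A' \to B'$ makes the square commute; one first checks such a vertical map exists because $\alpha_3\alpha_1\beta = \gamma\beta = 0$ forces $\alpha_1\beta$ to factor through $\ker\alpha_3 = \alpha_2$). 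To get $h$, compute $\gamma f = \alpha_3\alpha_1 f = \alpha_3\alpha_2 g = 0$, so $f$ factors uniquely through the kernel $\beta$ of $\gamma$, yielding the desired $h$ with $\beta h = f$. Uniqueness of $h$ is immediate since $\beta$ is monic (kernels are monic). The compatibility $\,?\,h = g$ on the $B'$-side then follows because $\alpha_2$ is monic and $\alpha_2(\,?\,h) = \alpha_1\beta h = \alpha_1 f = \alpha_2 g$.

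\noindent For the converse ($\Rightarrow$), assume the square is a pullback and show $\beta = \ker\gamma$. First $\gamma\beta = \alpha_3\alpha_1\beta = \alpha_3\alpha_2(\text{left vertical}) = 0$, using commutativity of the pullback square and $\alpha_3\alpha_2 = 0$. For the universal property of the kernel, take any $t : X \to A$ with $\gamma t = 0$, i.e.\ $\alpha_3(\alpha_1 t) = 0$; then $\alpha_1 t$ factors through $\ker\alpha_3 = \alpha_2$, say $\alpha_1 t = \alpha_2 s$ for a unique $s : X \to B'$. The pair $(t, s)$ is now a cone over the pullback diagram, so by the universal property there is a unique $h : X \to A'$ with $\beta h = t$. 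This is exactly the factorization required to identify $\beta$ as a kernel of $\gamma$, and uniqueness transfers from the pullback's uniqueness together with monicity of $\beta$.

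\noindent I expect the main subtlety — not a deep obstacle, but the point demanding care — to be the bookkeeping of the unnamed left vertical morphism $A' \to B'$: the statement as drawn does not label it, yet the pullback square requires it, and it is precisely the map induced by factoring $\alpha_1\beta$ through $\alpha_2 = \ker\alpha_3$. Making its existence and uniqueness explicit (via the universal property of $\ker\alpha_3$ and the monicity of $\alpha_2$) is what glues the two halves together, and it is the step where one repeatedly uses that kernels are monomorphisms to cancel $\alpha_2$ and $\beta$ on the left. Everything else is a routine diagram chase through the two universal properties.
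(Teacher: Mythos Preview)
Your argument is correct and complete: both directions are handled by the standard diagram chase through the universal properties of the kernel and the pullback, and you correctly isolate the one point requiring care, namely the existence and uniqueness of the left vertical map $A'\to B'$ obtained by factoring $\alpha_1\beta$ through $\alpha_2=\ker\alpha_3$.

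The paper does not give its own proof at all: it simply cites \cite[Proposition 13.1, p.~15]{MitBook}. So your proposal is not so much a different route as a fully written-out proof where the paper defers to a reference. What you gain is a self-contained argument; what the paper gains is brevity. Mitchell's proof proceeds along exactly the same lines as yours, so there is no substantive mathematical difference.
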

\begin{proof}
See \cite[Proposition 13.1, p.15]{MitBook}.
\end{proof}
\vspace{.2in}

\begin{proposition}\label{otradescop}
Let $\xymatrix{\tau:T\ar@{^(->}[r] & 1_{\mathcal{A}}}$ and $\xymatrix{\sigma:S\ar@{^(->}[r] & 1_{\mathcal{A}}}$ be preradicals of $\mathcal{A}$. Considering the lower right square of diagram \eqref{SQdiagram}, we have: $$(\sigma : \tau)=\mathrm{Ker}(\xymatrix{1_{\mathcal{A}}\ar[r]^{\sigma^*} & S^*\ar[r]^{\tau^* S^*} & T^*S^*)}$$

\end{proposition}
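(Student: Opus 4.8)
The plan is to prove the identity $(\sigma:\tau)=\mathrm{Ker}(\tau^*S^*\circ\sigma^*)$ by applying Lemma \ref{lemkerpull} to a concrete instance of the diagram appearing in its statement, extracted from diagram \eqref{SQdiagram}. The key observation is that the definition of the coproduct (Definition \ref{coproduct}) already presents $(\sigma:\tau)$ as the morphism completing a pullback, so what remains is to recognize that same pullback square inside the lower-right portion of \eqref{SQdiagram} and then invoke the lemma to translate ``pullback'' into ``kernel of the composite.''

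First I would fix the setup so that the pullback of Definition \ref{coproduct} matches the hypotheses of Lemma \ref{lemkerpull}. In the lemma, $\alpha_2$ must be a kernel of $\alpha_3$. I would set $\alpha_3:=\tau^*S^*:S^*\longrightarrow T^*S^*$ and identify $\alpha_2:=\tau S^*:TS^*\longrightarrow S^*$, noting that $\tau S^*$ is precisely a kernel of $\tau^*S^*$. This is where I would have to justify that the star-composed morphisms retain the SQ-sequence structure: from the SQ-sequence $\xymatrix{T\ar@{^(->}[r]^{\tau} & 1_{\mathcal{A}}\ar@{->>}[r]^{\tau^*} & T^*}$ one obtains, after applying the exact functor $(-)S^*$ (equivalently, by the exactness of the rightmost column of \eqref{SQdiagram}), the short exact sequence $\xymatrix{TS^*\ar@{^(->}[r]^{\tau S^*} & S^*\ar@{->>}[r]^{\tau^*S^*} & T^*S^*}$, so indeed $\tau S^*=\mathrm{Ker}(\tau^*S^*)$. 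With $\alpha_1:=\sigma^*:1_{\mathcal{A}}\longrightarrow S^*$ and $\beta:=(\sigma:\tau)$ playing the role of the top arrow $A'\to A$, the pullback square from Definition \ref{coproduct} is exactly the diagram of the lemma completed to a pullback.

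Next I would simply apply Lemma \ref{lemkerpull} in the direction ``pullback $\Rightarrow$ kernel.'' Since the square is a pullback and $\alpha_2=\tau S^*$ is a kernel of $\alpha_3=\tau^*S^*$, the lemma yields that $\beta=(\sigma:\tau)$ is a kernel of $\alpha_3\alpha_1=\tau^*S^*\circ\sigma^*$, which is the desired conclusion $(\sigma:\tau)=\mathrm{Ker}(\tau^*S^*\circ\sigma^*)$. I would emphasize that all of this takes place in $\mathrm{Fun}(\mathcal{A},\mathcal{A})$, which is an abelian (extended) category and in particular has a zero object, so the lemma is applicable.

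The main obstacle is purely the bookkeeping of the star compositions and the verification that $\tau S^*$ is genuinely a kernel of $\tau^*S^*$ as morphisms of functors. This amounts to checking that the rightmost column of \eqref{SQdiagram} is a short exact sequence, which the text has already asserted in its commentary on the diagram; invoking that commentary lets me sidestep any pointwise computation. A secondary point worth a sentence is well-definedness: the equality should be read at the level of subobjects (equivalence classes), and since both the pullback construction and the kernel are defined only up to isomorphism, the identification is canonical and independent of the chosen representatives of $\tau$ and $\sigma$, consistent with the remark following Definition \ref{coproduct}.
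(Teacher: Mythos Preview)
Your proposal is correct and follows exactly the same approach as the paper: apply Lemma \ref{lemkerpull} to the pullback square of Definition \ref{coproduct}, using that $\tau S^*$ is a kernel of $\tau^*S^*$ (from the rightmost column of \eqref{SQdiagram}). The paper's proof is the single sentence ``It follows applying Lemma \ref{lemkerpull} to Definition \ref{coproduct},'' and your write-up simply unpacks that sentence.
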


\begin{proof}
It follows applying Lemma \ref{lemkerpull} to Definition \ref{coproduct}.
\end{proof}

In particular, if $\tau=\sigma$, the lower right square of diagram \eqref{SQdiagram} looks like:

\begin{equation}\label{rad}
\begin{aligned}
\xymatrix{
0\ar[r] & S\ar[r]^{\sigma} & 1_{\mathcal{A}}\ar[r]^{\sigma^*}\ar[d]^{\sigma^*} &  S^*\ar[r]\ar[d]^{\sigma^* S^*} & 0\\
& & S^*\ar[r]^{S^*\sigma^*} & S^*S^* &}
\end{aligned}
\end{equation}

Notice that, since the representative of $\sigma^*$ is an epimorphism, then $S^*\sigma^*=\sigma^*S^*$.
 \vspace{.2in}
\begin{definition}
Let  $\xymatrix{\sigma:S\ar@{^(->}[r] & 1_{\mathcal{A}}}$ be a preradical of $\mathcal{A}$. We say that $\sigma$ is a $\textbf{radical}$ if  $(\sigma:\sigma)=\sigma$.
 \end{definition}
 \vspace{.2in}
 
\begin{remark}\label{isirad}
For any preradical $\xymatrix{\sigma: S\ar@{^(->}[r] & 1_{\mathcal{A}}}$, due to Proposition \ref{otradescop}, $\sigma$ is a radical if, and only if, $\sigma=\mathrm{Ker}(\xymatrix{1_{\mathcal{A}}\ar[r]^{\sigma^*} & S^*\ar[r]^{\sigma^* S^*} & S^*S^*)}$, which is equivalent to the condition that $\sigma^*$ and $S^*\sigma^*\circ\sigma^*$, being both epimorphisms, represent the same quotient object, i.e.,
considering  diagram \eqref{rad}, $S^*\sigma^*=\sigma^*S^*:S^*\longrightarrow S^*S^*$ is an isomorphism. 
\end{remark}

For the abelian category $\mathcal{A}$, let $\mathcal{A}^{op}$ be the corresponding opposite category. We can define an assignment $\Delta: Pr(\mathcal{A})\rightarrow Pr(\mathcal{A}^{op})$, such that $\Delta(\tau):=(\tau^*)^{op}$. Notice that indeed, if $\tau:T\to 1_{\mathcal{A}}$ then $\tau^*:1_{\mathcal{A}}\to T^*$ and $(\tau^*)^{op}:T^{op}\to 1_{\mathcal{A}^{op}}$. Here $T^{op}:\mathcal{A}^{op}\to \mathcal{A}^{op}$ is the functor such that for any object  $A$ in $\mathcal{A}^{op}$ we have $T^{op}(A)=T(A)$ and for any morphism $f^{op}$ in $\mathcal{A}^{op}$ we have
$T^{op}(f^{op})=T(f)^{op}$. The assignment $\Delta$ is a one-to-one correspondence, with inverse assignment $\Delta': Pr(\mathcal{A}^{op})\rightarrow Pr(\mathcal{A})$ defined similarly. Notice also that, for any $\tau,\sigma\in Pr(\mathcal{A})$ we have $\tau\leq\sigma$ if, and only if $\tau^*\leq\sigma^*$ if, and only if $(\sigma^*)^{op}\leq(\tau^*)^{op}$. Therefore $\Delta$ is an anti-isomorphism of ordered structures. We call $\Delta$ the $\textbf{duality assignment}$ related to the category $\mathcal{A}$, and when necessary we will denote it by $\Delta_{\mathcal A}$. Furthermore, this assignment relates both operations of preradicals in $Pr(\mathcal{A})$ and $Pr(\mathcal{A}^{op})$.  
\vspace{.2in}

\begin{proposition}\label{delta}
Let $\mathcal{A}$ be an abelian category and let $\Delta: Pr(\mathcal{A})\rightarrow Pr(\mathcal{A}^{op})$ be the assignment defined above. For any $\tau,\sigma\in Pr(\mathcal{A})$ we have:
\begin{enumerate}
    \item [(a)] $\Delta(\sigma : \tau)=\Delta(\tau)\cdot\Delta(\sigma)$
    \item [(b)] $\Delta(\tau\cdot\sigma)=(\Delta(\sigma) : \Delta(\tau))$
\end{enumerate}
\end{proposition}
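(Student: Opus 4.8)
The plan is to prove both statements by directly unwinding the definitions of $\Delta$, the product, and the coproduct, and then invoking Proposition~\ref{otradescop} together with Remark~\ref{isirad}'s style of reasoning in the opposite category. The key observation is that $\Delta$ is defined by $\Delta(\tau) = (\tau^*)^{op}$, so applying $\Delta$ systematically converts subobjects into quotient objects and reverses arrows; in particular, if $\tau$ has SQ-sequence $T \hookrightarrow 1_{\mathcal{A}} \twoheadrightarrow T^*$, then $\Delta(\tau)$ has SQ-sequence in $\mathcal{A}^{op}$ whose subobject part is $(\tau^*)^{op}$ and whose quotient part is $\tau^{op}$. Thus the starred and unstarred roles swap under $\Delta$, which is exactly why the product and coproduct get exchanged.

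For part (a), I would start from Proposition~\ref{otradescop}, which gives $(\sigma:\tau) = \mathrm{Ker}\big(1_{\mathcal{A}} \xrightarrow{\sigma^*} S^* \xrightarrow{\tau^* S^*} T^* S^*\big)$. Applying $\Delta$ means taking the cokernel and passing to $\mathcal{A}^{op}$, so $\Delta(\sigma:\tau) = ((\sigma:\tau)^*)^{op}$, and $(\sigma:\tau)^*$ is the cokernel of that kernel, i.e.\ the quotient object represented by the composite $\tau^* S^* \circ \sigma^*$ (up to the usual identifications). Reading this composite backwards in $\mathcal{A}^{op}$, it becomes a product: the composite $(\tau^* S^*)^{op} \circ (\sigma^*)^{op}$ is precisely the diagonal of the square defining the product $\Delta(\tau) \cdot \Delta(\sigma)$, as in Definition~\ref{product} applied in $\mathcal{A}^{op}$. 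The point is that the operator $T^{op}$ satisfies $(T^*)^{op} = (T^{op})^*$-type compatibilities, so that the star-composed natural transformations $\tau^* S^*$ and $T \sigma$ correspond correctly under $\Delta$. Part (b) is then the formal dual of (a): since $\Delta$ is a bijection with inverse $\Delta'$ of the same form, applying the already-proven identity (a) in the category $\mathcal{A}^{op}$ and translating back via $\Delta'$ yields (b) with no additional work; alternatively one runs the same unwinding starting from Definition~\ref{product} for $\tau \cdot \sigma$.

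The main obstacle I anticipate is bookkeeping the star-composition notation correctly under $\Delta$: one must verify that $\Delta$ sends $\tau S$ to $\sigma'^{op} \circ (\dots)$ matching the right horizontal map in the product/coproduct squares, i.e.\ that the functor-precomposition and functor-postcomposition operations $(-)S$, $T(-)$ behave compatibly with taking cokernels and passing to the opposite category. Concretely, I would need the identities relating $(\tau^*)$ evaluated at the object $S^*$ with the map appearing in the SQ-sequence of $\Delta(\tau)$, and check that the pullback defining the coproduct in $\mathcal{A}^{op}$ dualizes to the pushout/cokernel description of the product in $\mathcal{A}$. Once the correspondence of the four corners of diagram~\eqref{SQdiagram} under $\Delta$ is pinned down — namely that $\Delta$ reflects the diagram about its anti-diagonal, sending the upper-left product corner $TS$ to the lower-right coproduct corner and vice versa — both identities fall out, since the entire diagram~\eqref{SQdiagram} for $\tau,\sigma$ in $\mathcal{A}$ is carried to the analogous diagram for $\Delta(\sigma), \Delta(\tau)$ in $\mathcal{A}^{op}$ with rows and columns interchanged.
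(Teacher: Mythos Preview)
Your proposal is correct and follows essentially the same route as the paper: the paper writes out the opposite of diagram~\eqref{SQdiagram} in $\mathrm{Fun}(\mathcal{A}^{op},\mathcal{A}^{op})$ and then reads off both identities by combining Definition~\ref{product} with Proposition~\ref{otradescop}, exactly as you outline. The only minor difference is that the paper computes (b) directly rather than deducing it from (a) via $\Delta'$, though your duality shortcut works too.
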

\begin{proof}
    As we said above, for any two preradicals $\xymatrix{\tau:T\ \ar@{^(->}[r] & 1_{\mathcal{A}}}$ and $\xymatrix{\sigma:S\ \ar@{^(->}[r] & 1_{\mathcal{A}}}$ of $\mathcal{A}$, we have diagram (\ref{SQdiagram}) in $\mathrm{Fun}(\mathcal{A},\mathcal{A})$. The corresponding diagram in $\mathrm{Fun}(\mathcal{A}^{op},\mathcal{A}^{op})$ is:
\begin{equation}\label{SQopdiagram}
\begin{aligned}
\xymatrix{ & 0\ar[d] & 0\ar[d] & 0\ar[d]\\
& T^*S^*\ \ \ar[d]_{(\tau^* S^*)^{op}}\ar[r]^{(T^*\sigma^*)^{op}} & T^*\ar[d]^{\Delta(\tau)}\ar[r]^{(T^*\sigma)^{op}}  & T^*S\ar[d]^{(\tau^* S)^{op}} &\\
0\ar[r] & S^*\ar[r]^{\Delta(\sigma)}\ar[d]_{(\tau S^*)^{op}} & 1_{\mathcal{A}^{op}}\ar[r]^{\sigma^{op}}\ar[d]^{\tau^{op}} &  S\ar[r]\ar[d]^{(\tau S)^{op}} & 0\\
& TS^*\ar[d]\ar[r]_{(T\sigma^*)^{op}} & T\ar[d]\ar[r]_{(T\sigma)^{op}} & TS\ar[d] &\\
 & 0 & 0 & 0}
 \end{aligned}
\end{equation}

By Definition \ref{product} we have: 
\begin{align*}
\Delta(\tau)\Delta(\sigma)=\Delta(\sigma)\circ(\tau^*S^*)^{op}&=(\sigma^*\circ(\tau^*S^*))^{op}=\Delta(\sigma :\tau) 
\end{align*}
The last equality holds by Proposition \ref{otradescop} and by definition of $\Delta$.\\

On the other hand, by Proposition \ref{otradescop} we have:
\begin{align*}
    (\Delta(\sigma) : \Delta(\tau))&=\text{Ker}(\Delta(\sigma) : \Delta(\tau))^*\\
    &=\text{Ker}( (T\sigma)^{op}\circ\tau^{op})\\
 &=\text{Ker}(\tau\cdot\sigma)^{op}=((\tau\cdot\sigma)^*)^{op}\\
 &=\Delta(\tau\cdot\sigma)
\end{align*}
The third equality holds by Definition \ref{product}, and the last equality by definition of $\Delta$. 
\end{proof}
\vspace{.2in}

\begin{corollary}\label{radidem}
Let $\mathcal{A}$ be an abelian category and let $\tau\in Pr(\mathcal{A})$. Then:
\begin{enumerate}
    \item[(a)] $\tau$ is a radical if, and only if, $\Delta(\tau)$ is idempotent in $Pr(\mathcal{A}^{op})$
    \item[(b)] $\tau$ is idempotent if, and only if, $\Delta(\tau)$ is a radical in $Pr(\mathcal{A}^{op})$
\end{enumerate}
\end{corollary}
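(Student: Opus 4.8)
The plan is to derive Corollary~\ref{radidem} directly from Proposition~\ref{delta} by specializing the two formulas to the case $\tau=\sigma$, and then translating the defining equations of ``radical'' and ``idempotent'' through the order-anti-isomorphism $\Delta$. The key observation is that both notions are defined by an equality of preradicals ($(\sigma:\sigma)=\sigma$ for a radical, $\tau\cdot\tau=\tau$ for idempotent), and since $\Delta$ is a bijection, such an equality holds if and only if the equality of the images under $\Delta$ holds.

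For part~(a), I would start from a preradical $\tau\in Pr(\mathcal{A})$ and apply Proposition~\ref{delta}(a) with $\sigma=\tau$. This gives
\begin{equation*}
\Delta(\tau:\tau)=\Delta(\tau)\cdot\Delta(\tau).
\end{equation*}
Now suppose $\tau$ is a radical, i.e.\ $(\tau:\tau)=\tau$. Applying $\Delta$ to both sides and using the displayed identity yields $\Delta(\tau)\cdot\Delta(\tau)=\Delta(\tau)$, which is precisely the statement that $\Delta(\tau)$ is idempotent in $Pr(\mathcal{A}^{op})$. Conversely, if $\Delta(\tau)$ is idempotent, then $\Delta(\tau:\tau)=\Delta(\tau)$, and since $\Delta$ is injective we recover $(\tau:\tau)=\tau$, so $\tau$ is a radical. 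Part~(b) is entirely dual: I would apply Proposition~\ref{delta}(b) with $\sigma=\tau$ to obtain $\Delta(\tau\cdot\tau)=(\Delta(\tau):\Delta(\tau))$, and then run the same equivalence, using that $\tau$ is idempotent iff $\tau\cdot\tau=\tau$ iff $(\Delta(\tau):\Delta(\tau))=\Delta(\tau)$ iff $\Delta(\tau)$ is a radical.

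I do not expect any genuine obstacle here, since the result is a formal consequence of the operation-exchanging property already established in Proposition~\ref{delta} together with the injectivity of $\Delta$ (recorded in the paragraph preceding that proposition, where $\Delta$ is shown to be a one-to-one order-anti-isomorphism). The only point requiring a little care is to invoke injectivity of $\Delta$ explicitly in the ``if'' directions, so that the equality of the $\Delta$-images can be pulled back to an equality in $Pr(\mathcal{A})$; this is what makes each implication reversible rather than one-sided.
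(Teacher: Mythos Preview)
Your proposal is correct and matches the paper's intended approach: the corollary is stated immediately after Proposition~\ref{delta} with no written proof (just a \qed), precisely because it follows by specializing Proposition~\ref{delta} to $\sigma=\tau$ and using that $\Delta$ is a bijection. Your only addition is making explicit the use of injectivity of $\Delta$ for the converse directions, which is fine.
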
 \qed
\vspace{.2in}

For any abelian category $\mathcal{A}$, we will denote by $Idem(\mathcal{A})$ the collection of all idempotent preradicals of $\mathcal{A}$, by $Rad(\mathcal{A})$, the collection of all radicals  of $\mathcal{A}$, and by $Radidem(\mathcal{A})$ the collection of all idempotent radicals of $\mathcal{A}$. \\

\begin{corollary}\label{antisos}
For any abelian category  $\mathcal{A}$, the anti-isomorphism $\Delta$ induces poset anti-isomorphisms:  
$$\Delta': Idem(\mathcal{A})\longrightarrow Rad(\mathcal{A}^{op})$$
$$\Delta'': Rad(\mathcal{A})\longrightarrow Idem(\mathcal{A}^{op})$$
$$\Delta''': Radidem(\mathcal{A})\longrightarrow Radidem(\mathcal{A}^{op})$$
\end{corollary}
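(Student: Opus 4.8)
The plan is to derive Corollary \ref{antisos} directly from the already-established facts about the duality assignment $\Delta$, namely that $\Delta$ is an order anti-isomorphism (proven in the discussion preceding Proposition \ref{delta}) together with the type-exchange property of Corollary \ref{radidem}. Since $\Delta: Pr(\mathcal{A})\to Pr(\mathcal{A}^{op})$ is already a bijection that reverses order, the only work is to check that it carries each of the three distinguished subcollections onto its dual counterpart, and that it does so surjectively; the anti-isomorphism property of each restriction is then inherited from $\Delta$.

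First I would establish the two single-type maps $\Delta'$ and $\Delta''$. For $\Delta'$, by Corollary \ref{radidem}(a) a preradical $\tau\in Pr(\mathcal{A})$ is a radical if and only if $\Delta(\tau)$ is idempotent in $Pr(\mathcal{A}^{op})$. Hence $\Delta$ restricts to a well-defined injection $Rad(\mathcal{A})\to Idem(\mathcal{A}^{op})$, which is the map I would name $\Delta''$; symmetrically, Corollary \ref{radidem}(b) gives that $\tau$ is idempotent iff $\Delta(\tau)$ is a radical, yielding the restriction $Idem(\mathcal{A})\to Rad(\mathcal{A}^{op})$, which is $\Delta'$. (I note the statement labels them so that $\Delta'$ goes from idempotents to radicals and $\Delta''$ from radicals to idempotents, so I would invoke part (b) for $\Delta'$ and part (a) for $\Delta''$.) For surjectivity of $\Delta'$: given any radical $\rho\in Rad(\mathcal{A}^{op})$, its preimage $\tau:=\Delta^{-1}(\rho)=\Delta'(\rho)$ lives in $Pr(\mathcal{A})$, and applying the same biconditional in the reverse direction (using that $\Delta'$ exchanges types on $\mathcal{A}^{op}$, i.e.\ the analogue of Corollary \ref{radidem} for $\mathcal{A}^{op}$) shows $\tau$ is idempotent, so $\rho$ is in the image. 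The same argument handles $\Delta''$.

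For the third map $\Delta'''$, I would simply intersect the two conditions. Since $Radidem(\mathcal{A})=Idem(\mathcal{A})\cap Rad(\mathcal{A})$, and a preradical $\tau$ is both idempotent and a radical precisely when $\Delta(\tau)$ is, by Corollary \ref{radidem}, both a radical and idempotent, we get $\Delta(Radidem(\mathcal{A}))\subseteq Radidem(\mathcal{A}^{op})$, with the reverse containment following from the inverse assignment $\Delta^{-1}$ in the same way. Thus $\Delta$ restricts to a bijection $\Delta''': Radidem(\mathcal{A})\to Radidem(\mathcal{A}^{op})$. In each of the three cases, the restricted map inherits order-reversal from $\Delta$ on the nose: if $\tau\leq\sigma$ in the relevant subcollection then $\tau\leq\sigma$ in $Pr(\mathcal{A})$, so $\Delta(\sigma)\leq\Delta(\tau)$ in $Pr(\mathcal{A}^{op})$, and the reverse implication holds because $\Delta$ is an anti-isomorphism with the same property for $\Delta^{-1}$; hence each restriction is an order anti-isomorphism onto its image.

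I expect essentially no genuine obstacle here, since all the substance is packaged in Corollary \ref{radidem} and in the prior verification that $\Delta$ is an order anti-isomorphism. The only point requiring mild care is the surjectivity of the three restrictions, which is the one place where I must apply the type-exchange biconditional to the inverse assignment $\Delta^{-1}=\Delta'$ acting on $Pr(\mathcal{A}^{op})$ rather than to $\Delta$ itself; this is legitimate because Corollary \ref{radidem} holds for every abelian category, in particular for $\mathcal{A}^{op}$ with its own duality assignment $\Delta_{\mathcal{A}^{op}}$, whose composite with $\Delta_{\mathcal{A}}$ is the identity. Once that symmetry is noted, the corollary is immediate, and the proof can legitimately be closed with \qed as the authors have done.
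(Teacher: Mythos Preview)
Your proposal is correct and is precisely the argument the paper intends: the corollary is stated with only a \qed, indicating it follows immediately from the order anti-isomorphism $\Delta$ together with Corollary \ref{radidem}, which is exactly the combination you invoke. Your care about surjectivity via the inverse assignment is appropriate but already built into the fact that $\Delta$ is a bijection and Corollary \ref{radidem} is a biconditional.
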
\qed

\section[Galois connections]{Galois connections between preradicals in abelian categories}

Recall that, in every abelian category, each morphism $f:A\longrightarrow B$ has a factorization
$f=up$ where $p$ is an epimorphism and $u$ is a monomorphism. In fact, in any such a factorization, $p$ is a representative of the quotient object $\mathrm{Coker}(\mathrm{Ker}(f))$ and  $u$ is a representative of the subobject $\mathrm{Ker}(\mathrm{Coker}(f))$, which is by definition the $\textbf{image}$ of $f$, denoted by $\mathrm{Im}(f)$. The following result will be useful and it is easy to prove. Notice that in part $(a)$ we are denoting by $\leq$ the partial order on the subobjects of $C$, and in part $(b)$ we use the same symbol to denote the subobjects of $A$.\\

\begin{lemma}\label{lemimker}
Let $\mathcal{C}$ be an abelian category.  Let $f:A\longrightarrow B$ and $g:B\longrightarrow C$ morphisms in $\mathcal{C}$. Then the following properties hold:
\begin{enumerate}
\item [(a)] $\mathrm{Im}(gf)\leq \mathrm{Im}(g)$.
\item [(b)] $\mathrm{Ker}(f)\leq \mathrm{Ker}(gf)$.
\end{enumerate}
\end{lemma}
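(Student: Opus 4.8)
The plan is to prove each inequality directly from the extremal characterizations of the image and the kernel as subobjects, using only the epi--mono factorization recalled immediately before the statement. Both parts follow the same pattern: exhibit a factorization that witnesses the required inequality, and then invoke the appropriate universal property, reading everything at the level of subobjects (equivalence classes) rather than chosen representatives.

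For part (a), I would begin with the canonical factorization $g = u\circ p$ of the second morphism, where $p:B\twoheadrightarrow I$ is an epimorphism and $u:I\hookrightarrow C$ is a monomorphism representing the subobject $\mathrm{Im}(g)$ of $C$. Composing on the right with $f$ gives $gf = u\circ(pf)$, so $gf$ factors through the monomorphism $u$. Since $\mathrm{Im}(gf)=\mathrm{Ker}(\mathrm{Coker}(gf))$ is, by construction, the smallest subobject of $C$ through which $gf$ factors, this factorization forces $\mathrm{Im}(gf)\leq\mathrm{Im}(g)$ in the order on subobjects of $C$.

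For part (b), I would take a representative monomorphism $k:\mathrm{Kr}(f)\to A$ of the subobject $\mathrm{Ker}(f)$, so that $fk=0$ by definition of the kernel. Then $(gf)k = g(fk)=0$, so the monomorphism $k$ is annihilated by $gf$. By the universal property of $\mathrm{Ker}(gf)$, namely that every morphism into $A$ killed by $gf$ factors uniquely through a representative of $\mathrm{Ker}(gf)$, the subobject $\mathrm{Ker}(f)$ factors through $\mathrm{Ker}(gf)$, which is exactly the assertion $\mathrm{Ker}(f)\leq\mathrm{Ker}(gf)$ in the order on subobjects of $A$.

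I do not expect any genuine obstacle: each argument is a one-line factorization followed by a universal property, and the only point requiring attention is to phrase the extremality of the image and of the kernel in terms of the partial order on subobjects, so that the conclusions are statements about equivalence classes and not about the particular representatives chosen. One could alternatively observe that the two parts are formally dual, since passing to $\mathcal{C}^{op}$ interchanges kernels and cokernels and turns the image into the coimage, and deduce (b) from (a); but the direct arguments above are the shortest route.
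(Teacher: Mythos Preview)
Your proposal is correct; both parts are handled by exactly the standard factorization-plus-universal-property argument one would expect. The paper itself does not supply a proof of this lemma (it is stated with a bare \qed\ as ``easy to prove''), so there is nothing to compare against beyond noting that your argument is the natural one the authors presumably had in mind.
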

\qed
\vspace{.2in}

In what follows, we consider two abelian categories $\mathcal{A}$ and $\mathcal{B}$. Then we have the abelian categories  $\mathrm{Fun}(\mathcal{A},\mathcal{A})$  and  $\mathrm{Fun}(\mathcal{B},\mathcal{B})$. As we said before, there is a partial order
$\leq$ on the collection $Pr(\mathcal{A})$ of all preradicals of $\mathcal{A}$. We will denote with the same symbol the partial order on $Pr(\mathcal{B})$.
\vspace{.2in}

\begin{theorem}\label{thmgaloisconexion1}
Each adjoint pair $(F,G):\mathcal{A}\longrightarrow \mathcal{B}$ between abelian categories induces a Galois connection $(\varphi,\psi):Pr(\mathcal{A})\longrightarrow Pr(\mathcal{B})$.
\end{theorem}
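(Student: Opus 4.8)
The plan is to build both maps directly from the adjunction data, the unit $\eta:1_{\mathcal{A}}\to GF$ and counit $\varepsilon:FG\to 1_{\mathcal{B}}$, and then to verify only properties $1$--$3$ from Section 2, since the remark there guarantees these suffice for a Galois connection. The guiding idea is that the coadjoint $\varphi$ pushes subobjects forward along $F$ (right exact, as a left adjoint) while the adjoint $\psi$ pulls them back along $G$ (left exact). Concretely, for a preradical $\tau:T\hookrightarrow 1_{\mathcal{A}}$ and each $B\in\mathcal{B}$ I set
$$\varphi(\tau)(B):=\mathrm{Im}\big(F(T(GB))\xrightarrow{F(\tau_{GB})}F(GB)\xrightarrow{\varepsilon_B}B\big),$$
and for a preradical $\rho:R\hookrightarrow 1_{\mathcal{B}}$ and each $A\in\mathcal{A}$ I define $\psi(\rho)(A)$ as the pullback of the monomorphism $G(\rho_{FA}):G(R(FA))\hookrightarrow G(FA)$ (a monomorphism since $G$ is left exact) along $\eta_A:A\to G(FA)$; by Lemma \ref{lemkerpull} this equals $\mathrm{Ker}(G(\rho^*_{FA})\circ\eta_A)$, where $\rho^*=\mathrm{Coker}(\rho)$.

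First I would check that these assignments are independent of the chosen representatives and actually define preradicals, i.e. subfunctors of the identity. For $\varphi(\tau)$ and a morphism $g:B\to B'$, naturality of $\tau$ and of $\varepsilon$ give $g\circ\varepsilon_B\circ F(\tau_{GB})=\varepsilon_{B'}\circ F(\tau_{GB'})\circ F(T(Gg))$; taking images and applying Lemma \ref{lemimker}(a) shows $g$ carries $\varphi(\tau)(B)$ into $\varphi(\tau)(B')$, which is exactly the subfunctor condition. Dually, for $\psi(\rho)$ and $f:A\to A'$, naturality of $\eta$ and of $\rho^*$ together with Lemma \ref{lemimker}(b) give $\psi(\rho)(A)\leq\mathrm{Ker}(G(\rho^*_{FA'})\circ\eta_{A'}\circ f)$, whence $f$ carries $\psi(\rho)(A)$ into $\psi(\rho)(A')$. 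The same two lemmas yield order preservation (property $1$): a factorization $\tau_{GB}=\tau'_{GB}\circ\iota$ forces $\varphi(\tau)(B)\leq\varphi(\tau')(B)$ by \ref{lemimker}(a), while $\rho\leq\rho'$ produces an epimorphism $R(FA)^*\twoheadrightarrow R'(FA)^*$ factoring $(\rho')^*_{FA}$ through $\rho^*_{FA}$, giving $\psi(\rho)(A)\leq\psi(\rho')(A)$ by \ref{lemimker}(b).

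The heart of the argument is properties $2$ and $3$. For inflation, $\tau\leq\psi(\varphi(\tau))$, I would use the triangular identity $\varepsilon_{FA}\circ F(\eta_A)=1_{FA}$: naturality of $\tau$ along $\eta_A$ gives $F(\tau_A)=\varepsilon_{FA}\circ F(\tau_{GFA})\circ F(T(\eta_A))$, so Lemma \ref{lemimker}(a) yields $\mathrm{Im}(F(\tau_A))\leq\varphi(\tau)(FA)$; applying $G$, precomposing with $\eta_A$, and using naturality of $\eta$ shows the composite $T(A)\hookrightarrow A\xrightarrow{\eta_A}GFA\to G(\varphi(\tau)(FA)^*)$ vanishes, i.e. $T(A)\leq\psi(\varphi(\tau))(A)$. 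For deflation, $\varphi(\psi(\rho))\leq\rho$, writing $\sigma:=\psi(\rho)$, the key is the vanishing $\rho^*_{FGB}\circ F(\sigma_{GB})=0$: its adjoint transpose is $G(\rho^*_{FGB})\circ\eta_{GB}\circ\sigma_{GB}$, which is zero by the very definition of $\sigma_{GB}$ as the kernel of $G(\rho^*_{FGB})\circ\eta_{GB}$, so injectivity of the adjunction bijection forces the morphism itself to vanish; then naturality of $\rho^*$ along $\varepsilon_B$ rewrites $\rho^*_B\circ\varepsilon_B$ through $\rho^*_{FGB}$, giving $\rho^*_B\circ\varepsilon_B\circ F(\sigma_{GB})=0$ and hence $\varphi(\sigma)(B)\leq R(B)$.

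I expect the deflationary step to be the main obstacle, precisely because it cannot be settled by the image/kernel lemmas alone: one must transport the defining kernel-vanishing of $\psi(\rho)$ across the adjunction isomorphism (equivalently, invoke the second triangular identity through the formula recovering a morphism from its transpose) in order to convert it into a statement about $F(\sigma_{GB})$ inside $\mathcal{B}$. Once properties $1$--$3$ are established, the remark in Section 2 immediately upgrades $(\varphi,\psi)$ to a Galois connection, with $\psi\circ\varphi$ a closure operator on $Pr(\mathcal{A})$ and $\varphi\circ\psi$ an interior operator on $Pr(\mathcal{B})$.
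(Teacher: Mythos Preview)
Your proposal is correct and follows essentially the same approach as the paper: the definitions of $\varphi$ and $\psi$ coincide (your pullback description of $\psi$ is identified with the paper's kernel description via Lemma~\ref{lemkerpull}), and both verifications of inflation and deflation rest on naturality of $\tau,\eta,\varepsilon,\rho^*$ together with the triangular identities and Lemma~\ref{lemimker}. The only cosmetic differences are that the paper works globally in $\mathrm{Fun}(\mathcal{B},\mathcal{B})$ (so naturality of $\varphi(\tau)$ and $\psi(\rho)$ is automatic rather than checked objectwise), and for deflation the paper spells out the triangular identity $(\varepsilon F)\circ(F\eta)=1_F$ in diagram~(\ref{phipsi}) where you invoke injectivity of the adjunction bijection---these are equivalent formulations of the same step.
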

\begin{proof}
Consider the unit $\eta:1_{\mathcal{A}}\longrightarrow GF$, and counit $\varepsilon: FG\longrightarrow 1_{\mathcal{B}}$ of the adjoint pair $(F,G)$. 
Let $\xymatrix{\tau: T\ar@{^(->}[r] & 1_{\mathcal{A}}}$ be in $Pr(\mathcal{A})$. We define the preradical:

\centerline{$\varphi(\tau):=\mathrm{Im}\Big(\xymatrix{FTG\ar[r]^{F\tau G} & FG\ar[r]^{\epsilon} & 1_{\mathcal{B}}}\Big)$}
\noindent where for any representative of the preradical $\tau$, $F\tau G:FTG\longrightarrow FG$ is the natural transformation given by 
$(F\tau G)_{B}:=F(\tau_{G(B)})$ for each $B\in \mathcal{A}$.\\

Now we show that $\varphi$ is well defined. Let $\tau_1,\tau_2$ be two representatives of $\tau$, i.e., $\tau_1\simeq\tau_2$. In particular, $\tau_{1}\leq \tau_{2}$, so there exists $\mu:T_{1}\longrightarrow T_{2}$ such that
$\tau_{1}=\tau_{2}\mu$. By applying the functors $F$ and $G$, respectively, to the left side and to the right side of the latter equality, we get the following commutative diagram:

\centerline{$
\xymatrix{FT_{1}G\ar[dr]^{F\tau_{1}G}\ar[d]_{F\mu G} & \\
F T_{2}G\ar[r]_{F\tau_{2}G} &  FG\ar[r]^{\epsilon} & 1_{\mathcal{B}} }$}

That is, $\epsilon\circ (F\tau_{1}G)=\Big(\epsilon\circ (F\tau_{2}G)\Big)\circ (F\mu G)$.
By Lemma \ref{lemimker}(a), we have $\varphi(\tau_{1})\leq \varphi(\tau_{2})$. Similarly, $\tau_2\leq\tau_1$ implies that $\varphi(\tau_2)\leq\varphi(\tau_1)$. Thus, $\tau_{1}\simeq \tau_{2}$ implies that $\varphi(\tau_{1})\simeq\varphi(\tau_{2})$. This also shows that $\varphi$ preserves order.\\

Notice that we have the following commutative diagram in $\mathrm{Fun}(\mathcal{B},\mathcal{B})$:
\begin{equation}\label{epimonophi}
\begin{aligned}
\xymatrix{ & S\ar@{^(->}[dr]^{\varphi(\tau)} &\\
FTG\ar[r]^{F\tau G}\ar@{->>}[ur] & FG\ar[r]^{\epsilon} & 1_{\mathcal{B}}}
\end{aligned}
\end{equation}

Now let $\xymatrix{\sigma: S\ar@{^(->}[r] & 1_{\mathcal{B}}}$ be in $Pr(\mathcal{B})$. We have, for each representative of $\sigma$, the natural transformation $G\sigma F:GSF\longrightarrow GF$ given by $(G\sigma F)_{A}:=G(\sigma_{F(A)})$ for all $A\in \mathcal{A}$. Let $\pi_\sigma$ be its cokernel in $\mathrm{Fun}(\mathcal{A},\mathcal{A})$:

\centerline{$\pi_{\sigma}:GF\longrightarrow \mathrm{Ckr}(G\sigma F)$}

And we define the preradical:

\centerline{$\psi(\sigma):=\mathrm{Ker}\Big(\xymatrix{1_{\mathcal{A}}\ar[r]^{\eta} & GF\ar[r]^(.4){\pi_{\sigma}} &  \mathrm{Ckr}(G\sigma F)}\Big)$}

\vspace{.2in}
Notice that it may also be described as:

\centerline{$\psi(\sigma):=\mathrm{Ker}\Big(\xymatrix{1_{\mathcal{A}}\ar[r]^{\eta} & GF\ar[r]^(.4){G\sigma^*F} &  GS^* F)}\Big)$}
\vspace{.2in}

To show that $\psi$ is well defined, let $\sigma_1,\sigma_2$ be two representatives of $\sigma$, i.e., $\sigma_1\simeq\sigma_2$. In particular, $\sigma_{1}\leq \sigma_{2}$, so there exists $\nu:S_{1}\longrightarrow S_{2}$ such that
$\sigma_{1}=\sigma_{2}\nu.$ Applying the functors $G$ and $F$, respectively, to the left side and to the right side of the latter equality, we get the following commutative diagram:

\centerline{
$\xymatrix{GS_{1}F\ar[dr]^{G\sigma_{1}F}\ar[d]_{G\nu F} & \\
GS_{2}F\ar[r]_{G\sigma_{2}F} &  GF}$}

Then, there exists $\gamma: \mathrm{Ckr}(G\sigma_{1}F)\longrightarrow \mathrm{Ckr}(G\sigma_{2}F)$ such that the following diagram commutes:

\centerline{$\xymatrix{GS_{1}F\ar[r]^{G\sigma_{1}F}\ar[d]_{G\nu F} &  GF\ar@{=}[d]\ar[r]^(.3){\pi_{\sigma_{1}}} & \mathrm{Ckr}(G\sigma_{1}F)\ar[r]\ar[d]^{\gamma} & 0 \\
GS_{2}F\ar[r]_{G\sigma_{2}F} &  GF\ar[r]^(.3){\pi_{\sigma_{2}}} & \mathrm{Ckr}(G\sigma_{2}F)\ar[r] & 0 }$}

Therefore, $\gamma\circ (\pi_{\sigma_{1}}\circ\eta)=\pi_{\sigma_{2}}\circ\eta$, and by Lemma \ref{lemimker}(b), we have $\psi(\sigma_{1})\leq \psi(\sigma_{2})$. Similarly, $\sigma_2\leq\sigma_1$ implies that $\psi(\sigma_2)\leq\psi(\sigma_1)$. Thus, $\sigma_{1}\simeq \sigma_{2}$ implies that $\psi(\sigma_{1})\simeq\psi(\sigma_{2})$. This also shows that $\psi$ preserves order.\\

Once defined the assignments $\varphi$ and $\psi$, we will prove that $(\varphi,\psi)$ is a Galois connection.

\begin{enumerate}

\item [(a)] We claim that  $\tau\leq \psi(\varphi(\tau))$ for each
$\xymatrix{\tau: T\ar@{^(->}[r] & 1_{\mathcal{A}}}$. By construction $\varphi(\tau)=\mathrm{Im}\Big(\xymatrix{FTG\ar[r]^{F\tau G} & FG\ar[r]^{\varepsilon} & 1_{\mathcal{B}}}\Big)$. Again, consider commutative diagram (\ref{epimonophi}). Then we have $G\varphi(\tau) F:GSF\longrightarrow GF$ and its cokernel $\pi_{\varphi(\tau)}:GF\longrightarrow \mathrm{Ckr}(G\varphi(\tau) F)$.
Then, by definition: 

\centerline{$\psi(\varphi(\tau))=\mathrm{Ker}\Big(\xymatrix{1_{\mathcal{A}}\ar[r]^{\eta} & GF\ar[r]^(.3){\pi_{\varphi(\tau)}} &  \mathrm{Ckr}(G\varphi(\tau) F)}\Big)$}

\vspace{.2in}
Now we claim that the following composition is zero:

\centerline{$\xymatrix{T\ar[r]^{\tau} & 1_{\mathcal{A}}\ar[r]^{\eta} & GF\ar[r]^(.3){\pi_{\varphi(\tau)}} &  \mathrm{Ckr}(G\varphi(\tau) F)}$}

\vspace{.2in}
Indeed, by applying $G$ to diagram (\ref{epimonophi}), and this diagram to the functor $F$, we get the commutative diagram:

\centerline{$\xymatrix{ &&  GSF\ar[drr]^{G\varphi(\tau)F} &\\
GFTGF\ar[rr]^{GF\tau GF}\ar[urr] &&  GFGF\ar[rr]^{G\varepsilon F} && GF\ar[r]^(.3){\pi_{\varphi(\tau)}} &  \mathrm{Ckr}(G\varphi(\tau) F)}$}

\vspace{.2in}
Since $\pi_{\varphi(\tau)}=\mathrm{Coker}(G\varphi(\tau)F)$ we have that $\pi_{\varphi(\tau)} \circ (G\varphi(\tau)F)=0$  and therefore $\pi_{\varphi(\tau)}\circ G\varepsilon F\circ GF\tau GF=0$.\\

We add this fact to the following commutative diagram:

\begin{equation}\label{psiphi}
\begin{aligned}
\xymatrix{T\ar[d]_{\tau}\ar[rr]^{T \eta}\ar@{}[drr]|{I}
& &  TGF\ar[d]_{\tau GF}\ar[rr]^{\eta TGF}\ar@{}[drr]|{II} &  & GFTGF\ar[d]_{GF\tau GF}\ar@/^3pc/[ddd]^{0} \\
1_{\mathcal{A}}\ar[rr]_{\eta} & & GF\ar[rr]_{\eta GF}\ar[drr]_{1} &  & GFGF\ar[d]_{G\varepsilon F}\ar@{}[dll]|(.3){III}\\
& & & & GF\ar[d]_{\pi_{\varphi(\tau)}}\\
& & & & \mathrm{Ckr}(G\varphi(\tau) F) }
\end{aligned}
\end{equation}

\noindent where square $I$ commutes because of the naturality of $\tau$, square $II$ commutes because of the naturality of $\eta$, and triangle $III$ commutes by the triangular identities of the adjoint pair $(F,G)$.
From this diagram, we have that $\pi_{\varphi(\tau)}\circ \eta \circ\tau=0.$ Since $\psi(\varphi(\tau))=\mathrm{Ker}(\pi_{\varphi(\tau)} \eta)$, we conclude that  $\tau\leq \psi(\varphi(\tau))$.\\

\item [(b)] Now we claim that $\varphi(\psi (\sigma))\leq \sigma$ for all $\xymatrix{\sigma: S\ar@{^(->}[r] & 1_{\mathcal{B}}}$. For any such $\sigma$ we have the natural transformation $G\sigma F:GSF\longrightarrow GF$ and we consider its cokernel $\pi_{\sigma}:GF\longrightarrow \mathrm{Ckr}(G\sigma F).$
By definition, we have:\\  \centerline{$\psi(\sigma)=\mathrm{Ker}\Big(\xymatrix{1_{\mathcal{A}}\ar[r]^{\eta} & GF\ar[r]^(.4){G\sigma^*F} &  GS^* F}\Big)$, and}

\centerline{$\varphi(\psi(\sigma))=\mathrm{Im}\Big(\xymatrix{FTG\ar[rr]^{F\psi(\sigma) G} & & FG\ar[r]^{\varepsilon} & 1_{\mathcal{B}}}\Big)$}
\vspace{.1in}
Therefore $G\sigma^* F\circ\eta\circ\psi(\sigma)=0$, and we have $FG\sigma^* FG\circ F\eta G\circ F\psi(\sigma) G=0$. We use this fact in the following diagram:

\begin{equation}\label{phipsi}
\begin{aligned}
\xymatrix{FTG\ar[d]^{F\psi(\sigma) G}\ar@/_3pc/[ddd]_{0} & &\\
FG\ar[d]^{F\eta G}\ar[drr]^{1} & &\\
FGFG\ar[d]^{FG \sigma ^*FG}\ar[rr]^{\varepsilon FG}\ar@{}[drr]|{II}\ar@{}[urr]|(.3){I} & & FG\ar[d]_{\sigma ^* F G}\ar[rr]^{\varepsilon}\ar@{}[drr]|{III}  & & 1_{\mathcal{B}}\ar[d]_{\sigma ^*}\\
FGS^*FG\ar[rr]_{\varepsilon S^*FG} & & S^*FG\ar[rr]_{S^*\varepsilon} & & S^*}
\end{aligned}
\end{equation}

\noindent where triangle $I$ commutes by the triangular identities, square $II$ commutes by the naturality of $\varepsilon$, and square $III$ commutes by the naturality of $\sigma$. 
Therefore $\sigma^*\circ\varepsilon\circ (F\psi(\sigma)G)=0$ and this implies that $\mathrm{Coker}\Big(\varepsilon\circ (F\psi(\sigma)G)\Big)\leq \sigma^*$. We conclude that $\varphi(\psi(\sigma))\leq \sigma$.

\end{enumerate}  
\end{proof}

We say that $(\varphi,\psi)$ is the Galois connection \textit{induced} by the adjoint pair $(F,G)$. In particular, we obtain the result presented in \cite{BJKN, BKN} for categories of modules. Namely, every equivalence between abelian categories induces an isomorphism between the corresponding collections of preradicals.
\vspace{.2in}

\begin{corollary}\label{equiviso}
Each equivalence $F:\mathcal{A}\longrightarrow\mathcal{B}$ between abelian categories induces an isomorphism $\varphi:Pr(\mathcal{A})\longrightarrow Pr(\mathcal{B})$.
\end{corollary}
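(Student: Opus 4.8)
The plan is to obtain this isomorphism as the special case of Theorem \ref{thmgaloisconexion1} in which the adjoint pair is an adjoint equivalence, and then to upgrade the resulting Galois connection to an order isomorphism by showing that both composites are identities. First I would promote $F$ to an adjoint equivalence: since $F$ is an equivalence, it admits a quasi-inverse $G$ together with natural \emph{isomorphisms} $\eta:1_{\mathcal{A}}\to GF$ and $\varepsilon:FG\to 1_{\mathcal{B}}$ making $(F,G)$ an adjoint pair, and moreover $F$ and $G$ are exact. Theorem \ref{thmgaloisconexion1} then produces the Galois connection $(\varphi,\psi):Pr(\mathcal{A})\to Pr(\mathcal{B})$. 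By properties $(2)$ and $(3)$ of the preliminaries we already have $\psi\circ\varphi\geq 1_{Pr(\mathcal{A})}$ and $\varphi\circ\psi\leq 1_{Pr(\mathcal{B})}$, and a Galois connection is an order isomorphism with $\psi=\varphi^{-1}$ exactly when these are equalities. So the entire problem reduces to the two reverse inequalities $\psi\varphi(\tau)\leq\tau$ and $\sigma\leq\varphi\psi(\sigma)$.

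The key simplification is that the image and cokernel appearing in the definitions of $\varphi$ and $\psi$ become inessential once $\eta,\varepsilon$ are invertible. Since $F$ is exact, $F\tau G$ is a monomorphism, hence $\varepsilon\circ F\tau G$ is already a monomorphism and $\varphi(\tau)$ is represented by $\varepsilon\circ F\tau G\colon FTG\hookrightarrow 1_{\mathcal{B}}$, with no nontrivial image to extract. Dually, exactness of $F$ and $G$ gives $\mathrm{Ker}(G\sigma^*F)=G\sigma F$ as a subobject of $GF$ (because $\mathrm{Ker}(\sigma^*)=\sigma$ and exact functors preserve this kernel--image factorization), and since $\eta$ is an isomorphism one has $\mathrm{Ker}(G\sigma^*F\circ\eta)=\eta^{-1}(\mathrm{Ker}(G\sigma^*F))$; therefore $\psi(\sigma)$ is represented by the monomorphism $\eta^{-1}\circ G\sigma F\colon GSF\hookrightarrow 1_{\mathcal{A}}$.

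Next I would exploit the symmetry of an adjoint equivalence. The reversed pair $(G,F)$ is again an adjoint equivalence, with $G$ left adjoint to $F$, unit $\varepsilon^{-1}:1_{\mathcal{B}}\to FG$ and counit $\eta^{-1}:GF\to 1_{\mathcal{A}}$, so Theorem \ref{thmgaloisconexion1} applied to $(G,F)$ yields a second Galois connection $(\varphi',\psi'):Pr(\mathcal{B})\to Pr(\mathcal{A})$. Feeding the counit $\eta^{-1}$ into the coadjoint formula gives $\varphi'(\sigma)=\mathrm{Im}(\eta^{-1}\circ G\sigma F)=\eta^{-1}\circ G\sigma F$, which by the previous paragraph is exactly $\psi(\sigma)$; the analogous computation using $\varepsilon^{-1}$ gives $\psi'(\tau)=\varepsilon\circ F\tau G=\varphi(\tau)$. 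Thus $\varphi'=\psi$ and $\psi'=\varphi$. The inflationary and deflationary properties of this \emph{second} connection read $\psi'\circ\varphi'\geq 1_{Pr(\mathcal{B})}$ and $\varphi'\circ\psi'\leq 1_{Pr(\mathcal{A})}$, that is $\varphi\circ\psi\geq 1_{Pr(\mathcal{B})}$ and $\psi\circ\varphi\leq 1_{Pr(\mathcal{A})}$. Combined with the inequalities from the first connection these force $\psi\circ\varphi=1_{Pr(\mathcal{A})}$ and $\varphi\circ\psi=1_{Pr(\mathcal{B})}$, so $\varphi$ is an order isomorphism with inverse $\psi$.

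The step I expect to be the main obstacle is the identification $\varphi'=\psi$ (and its dual $\psi'=\varphi$), which rests on two technical points: verifying carefully, through the star-composition bookkeeping, that $\mathrm{Ker}(G\sigma^*F)=\mathrm{Im}(G\sigma F)$ as subobjects of $GF$, and confirming that the reversed adjunction $(G,F)$ genuinely has $\eta^{-1}$ and $\varepsilon^{-1}$ as its counit and unit, so that the triangular identities hold for it. A more computational alternative, avoiding the reversed connection entirely, is to establish $\psi\varphi(\tau)\leq\tau$ directly from diagram \eqref{psiphi}: when $\eta$ is invertible the outer rectangle identifies $\psi\varphi(\tau)$ with $\eta^{-1}$ of $\mathrm{Im}(G\varphi(\tau)F)$, and the triangular identities collapse this back onto $\tau$. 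Since tracking the functorial indices there involves the same bookkeeping, I prefer the symmetric route.
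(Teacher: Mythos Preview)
Your proof is correct and takes a genuinely different route from the paper's. Both arguments start from Theorem \ref{thmgaloisconexion1} and exploit the fact that, for an adjoint equivalence, $\varepsilon\circ F\tau G$ is already monic so $\varphi(\tau)$ admits this simple representative. From there the paper proceeds by a direct diagram chase: it identifies $\pi_{\varphi(\tau)}$ with $\mathrm{Coker}(G\varepsilon F\circ GF\tau GF)$, takes an arbitrary $\delta$ with $\pi_{\varphi(\tau)}\circ\eta\circ\delta=0$, and uses the Factor Theorem together with the invertibility of $\eta TGF\circ T\eta$ inside diagram \eqref{psiphi} to force $\delta\leq\tau$; the argument for $\varphi\psi(\sigma)=\sigma$ is the dual chase through diagram \eqref{phipsi}. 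Your approach instead applies Theorem \ref{thmgaloisconexion1} a second time to the reversed equivalence $(G,F)$, checks that the resulting pair $(\varphi',\psi')$ coincides with $(\psi,\varphi)$, and lets the inflationary/deflationary inequalities of the two Galois connections cancel against each other. The paper's method is more hands-on and stays entirely within the machinery already built for the theorem; yours is shorter and more conceptual, trading the explicit factorization arguments for the bookkeeping of verifying that $(G,F,\varepsilon^{-1},\eta^{-1})$ satisfies the triangular identities and that $\mathrm{Ker}(G\sigma^*F)=G\sigma F$, both of which are routine once $F,G$ are exact.
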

\begin{proof}
 Let us suppose that $F:\mathcal{A}\longrightarrow\mathcal{B}$ is an equivalence. We can assume that there is a functor $G:\mathcal{B}\longrightarrow\mathcal{A}$ such that $(F,G)$ is an adjoint pair, and the unit  $\eta:1_{\mathcal{A}}\longrightarrow GF$, and counit $\varepsilon: FG\longrightarrow 1_{\mathcal{B}}$ are natural isomorphisms. By Theorem \ref{thmgaloisconexion1}, we can consider the Galois connection $(\varphi,\psi)$ induced by the pair $(F,G)$. Let $\tau\in Pr(\mathcal{A})$. Then $\tau\leq \psi(\varphi(\tau))$. On the other hand, notice that, since $\varepsilon$ is an isomorphism and $F$ is an equivalence, $\varepsilon\circ F\tau G$ is a monomorphism in $\mathcal{B}$; in fact it is equivalent to the preradical $\varphi(\tau)$. Therefore $G\varepsilon F\circ GF\tau GF$ is a monomorphism in $\mathcal{A}$, and in fact $\pi_{\varphi(\tau)}=\mathrm{Coker}(G\varepsilon F\circ GF\tau GF)$. Let $\xymatrix{\delta: D\ar@{^(->}[r] & 1_{\mathcal{B}}}$ be any preradical such that $\pi_{\varphi(\tau)}\circ \eta \circ\delta=0$. By the Factor Theorem, there exists $\gamma:D\longrightarrow GFTGF$ such that $ \eta \circ\delta=G\varepsilon F \circ GF\tau GF\circ\gamma$. Since $\eta$ is an isomorphism, then $\eta TGF\circ T\eta$ is an isomorphism. Let us denote $\beta=(\eta TGF\circ T\eta)^{-1}$. Then, by diagram (\ref{psiphi}), $\eta\circ\tau\circ\beta\circ\gamma=G\varepsilon F\circ GF\tau GF\circ(\eta TGF\circ T\eta)\circ\beta\circ\gamma=G\varepsilon F\circ GF\tau GF\circ\gamma=\eta\circ\delta$. Therefore $\tau\circ\beta\circ\gamma=\delta$, i.e., $\delta\leq\tau$. We conclude that $\psi(\varphi(\tau))=\tau$.\\
 
 Now let $\sigma\in Pr(\mathcal{A})$. Again by Theorem \ref{thmgaloisconexion1} we have $\varphi(\psi(\sigma))\leq \sigma$. Since $\eta$ is an isomorphism, and $F,G$ are equivalences, we have that $FG\sigma^*FG\circ F\eta G$ is an epimorphism in $\mathcal{B}$. By definition of $\psi(\sigma)$ and applying the equivalence $F$, we have $F\psi(\sigma) G=\mathrm{Ker}(FG\sigma^*FG\circ F\eta G)$. Let $\delta':1_{\mathcal{A}}\longrightarrow D'$ be any morphism in $\mathrm{Fun}(\mathcal{A}, \mathcal{A})$ such that $\delta'\circ \varepsilon\circ F\psi(\sigma)G=0$. By the Factor Theorem, there exists $\gamma':FGS^*FG\longrightarrow D'$ such that $\delta'\circ\varepsilon=\gamma'\circ FG\sigma^* FG\circ F\eta G$. Since $\varepsilon$ is an isomorphism, then $S^*\varepsilon\circ\varepsilon S^*FG$ is an isomorphism. Let $\beta'=(S^*\varepsilon\circ\varepsilon S^*FG)^{-1}$. Then, by diagram (\ref{phipsi}), $\gamma'\circ\beta'\circ\sigma^*\circ\varepsilon=\gamma'\circ\beta'\circ (S^*\varepsilon\circ\varepsilon S^*FG) \circ FG\sigma^* FG \circ F\eta G=\gamma'\circ FG\sigma^* FG\circ F\eta G =\delta'\circ\varepsilon$. Therefore $\gamma'\circ\beta'\circ\sigma^*=\delta'$, i.e. $\sigma^*\leq\delta'$. This means that $\sigma^*\leq\mathrm{Coker}\Big(\varepsilon\circ (F\psi(\sigma)G)\Big)$. We conclude that $\varphi(\psi(\sigma))=\sigma$. 
\end{proof}

\section{Galois connections in opposite categories}

Let us consider an adjoint pair $(F,G):\mathcal{A}\longrightarrow \mathcal{B}$ between abelian categories, with unit $\eta:1_{\mathcal{A}}\longrightarrow GF$, and counit $\varepsilon: FG\longrightarrow 1_{\mathcal{B}}$. Then we have also an adjoint pair $(G^{op},F^{op}):\mathcal{B}^{op}\longrightarrow \mathcal{A}^{op}$, where $F^{op}:\mathcal{A}^{op}\longrightarrow \mathcal{B}^{op}$ is the functor such that for any object  $A$ in $\mathcal{A}^{op}$ we have $F^{op}(A)=F(A)$ and for any morphism $f^{op}$ in $\mathcal{A}^{op}$ we have
$F^{op}(f^{op})=F(f)^{op}$. The functor $G^{op}$ is defined similarly. The unit of this adjoint pair is $\varepsilon':1_{\mathcal{B}^{op}}\longrightarrow F^{op}G^{op}$, and the counit is $\eta': G^{op}F^{op}\longrightarrow 1_{\mathcal{A}^{op}}$, where for each object $B$ in $\mathcal{B}^{op}$ we have that $\varepsilon'_B:B\to FG(B)$ is the opposite morphism, in $\mathcal{B}^{op}$, of $\varepsilon_B$, i.e., $\varepsilon'_B=\varepsilon_B^{op}$. The counit $\eta'$ is defined similarly. It is easy to verify that $\varepsilon'$ and $\eta'$ satisfy the triangle identities, from the corresponding triangle identities of $\eta$ and $\varepsilon$.\\

As we have seen in the previous section, the adjoint pair $(F,G)$ induces a Galois connection $(\varphi,\psi):Pr(\mathcal{A})\longrightarrow Pr(\mathcal{B})$. Similarly, the adjoint pair $(G^{op}, F^{op})$ induces a Galois connection $(\overline{\varphi},\overline{\psi}):Pr(\mathcal{B}^{op})\longrightarrow Pr(\mathcal{A}^{op})$. In this section we study the relation between these two Galois connections.\\

Recall that, for each abelian category $\mathcal{A}$ we define the duality assignment $\Delta_{\mathcal{A}}: Pr(\mathcal{A})\rightarrow Pr(\mathcal{A}^{op})$, such that $\Delta_{\mathcal{A}} :\tau \mapsto (\tau^*)^{op}$.
\vspace{.2in}

\begin{proposition}\label{Galoiscoprerad}
Let $(F,G):\mathcal{A}\longrightarrow \mathcal{B}$ be an adjoint pair between abelian categories, and let $(G^{op},F^{op}):\mathcal{B}^{op}\longrightarrow \mathcal{A}^{op}$ be the corresponding adjoint pair between the opposite categories. Consider the Galois conections induced by these adjoint pairs:
$$(\varphi,\psi):Pr(\mathcal{A})\longrightarrow Pr(\mathcal{B}),\quad\quad\quad  (\overline{\varphi},\overline{\psi}):Pr(\mathcal{B}^{op})\longrightarrow Pr(\mathcal{A}^{op})$$

Then the following diagrams commute:

\begin{equation*}\
\begin{aligned}
\xymatrix{Pr(\mathcal{A})\ar[r]^{\varphi}\ar[d]_{\Delta_{\mathcal{A}}} & Pr(\mathcal{B})\ar[d]^{\Delta_{\mathcal{B}}} & & Pr(\mathcal{A})\ar[d]_{\Delta_{\mathcal{A}}} & Pr(\mathcal{B})\ar[d]^{\Delta_{\mathcal{B}}}\ar[l]_{\psi}\\
Pr(\mathcal{A}^{op})\ar[r]_{\overline{\psi}} & Pr(\mathcal{B}^{op}) & & Pr(\mathcal{A}^{op}) & Pr(\mathcal{B}^{op})\ar[l]^{\overline{\varphi}}}
\end{aligned}
\end{equation*}
In other words,
\begin{enumerate}
    \item[(a)] For each $\tau\in Pr(\mathcal{A}),\  \Delta_{\mathcal{B}}(\varphi(\tau))=\overline{\psi}(\Delta_{\mathcal{A}}(\tau))$ 
    \item[(b)] For each $\sigma\in Pr(\mathcal{B}),\  \Delta_{\mathcal{A}}(\psi(\sigma))=\overline{\varphi}(\Delta_{\mathcal{B}}(\sigma))$ 
\end{enumerate}

\end{proposition}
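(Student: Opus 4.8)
The plan is to unwind the explicit descriptions of $\overline{\varphi}$ and $\overline{\psi}$ coming from Theorem \ref{thmgaloisconexion1} applied to the opposite adjoint pair $(G^{op},F^{op})$, and then to reduce both identities to (i) the way the duality assignment $\Delta$ interchanges the two terms $\tau$ and $\tau^*$ of an SQ-sequence, and (ii) the standard dualities $\mathrm{Ker}(h^{op})=(\mathrm{Coker}\,h)^{op}$ and $\mathrm{Im}(h^{op})=(\mathrm{Coim}\,h)^{op}$ in an abelian category. Since $(G^{op},F^{op})$ has left adjoint $G^{op}$, right adjoint $F^{op}$, unit $\varepsilon'$ and counit $\eta'$, the formulas for the induced connection read $\overline{\varphi}(\rho)=\mathrm{Im}(\eta'\circ G^{op}\rho F^{op})$ for $\rho\in Pr(\mathcal{B}^{op})$, and $\overline{\psi}(\lambda)=\mathrm{Ker}(F^{op}\lambda^{*} G^{op}\circ\varepsilon')$ for $\lambda\in Pr(\mathcal{A}^{op})$. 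First I would record how the basic data dualize: $\varepsilon'=\varepsilon^{op}$, $\eta'=\eta^{op}$, the star compositions satisfy $(F\tau G)^{op}=F^{op}\tau^{op}G^{op}$ and $(G\sigma F)^{op}=G^{op}\sigma^{op}F^{op}$ (checked componentwise), and, from the SQ-sequence $T\xrightarrow{\tau}1_{\mathcal{A}}\xrightarrow{\tau^{*}}T^{*}$, its opposite is exactly the SQ-sequence of $\Delta_{\mathcal{A}}(\tau)=(\tau^{*})^{op}$, whence $(\Delta_{\mathcal{A}}(\tau))^{*}=\tau^{op}$.

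Then I would prove (a) by a direct computation. Substituting $\lambda=\Delta_{\mathcal{A}}(\tau)$ into the formula for $\overline{\psi}$ and using $(\Delta_{\mathcal{A}}(\tau))^{*}=\tau^{op}$ gives $\overline{\psi}(\Delta_{\mathcal{A}}(\tau))=\mathrm{Ker}(F^{op}\tau^{op}G^{op}\circ\varepsilon')$. By the dualization facts this inner morphism is precisely $(\varepsilon\circ F\tau G)^{op}$, so $\overline{\psi}(\Delta_{\mathcal{A}}(\tau))=\mathrm{Ker}\big((\varepsilon\circ F\tau G)^{op}\big)=\big(\mathrm{Coker}(\varepsilon\circ F\tau G)\big)^{op}$. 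Finally, because a cokernel depends only on the image (mono part) of a morphism and $\varphi(\tau)$ is by definition that image, one has $\mathrm{Coker}(\varepsilon\circ F\tau G)=\mathrm{Coker}(\varphi(\tau))=\varphi(\tau)^{*}$; hence $\overline{\psi}(\Delta_{\mathcal{A}}(\tau))=(\varphi(\tau)^{*})^{op}=\Delta_{\mathcal{B}}(\varphi(\tau))$, which is exactly (a).

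For (b) there are two routes, and I would use the shorter one. Because the $\Delta$'s are order anti-isomorphisms, the transported maps $\Delta_{\mathcal{B}}\circ\varphi\circ\Delta_{\mathcal{A}}^{-1}$ and $\Delta_{\mathcal{A}}\circ\psi\circ\Delta_{\mathcal{B}}^{-1}$ form a Galois connection $Pr(\mathcal{B}^{op})\rightleftarrows Pr(\mathcal{A}^{op})$ with the same variance as $(\overline{\varphi},\overline{\psi})$, and its adjoint part is exactly the map appearing in (a). Since a Galois connection is determined by either component (the coadjoint being recovered from the adjoint via $f(p)=\min\{q:p\leq g(q)\}$), statement (a), which identifies this adjoint part with $\overline{\psi}$, forces the coadjoint parts to coincide as well; this is exactly (b). Alternatively one may prove (b) by the computation dual to the one above, substituting $\rho=\Delta_{\mathcal{B}}(\sigma)$ into $\overline{\varphi}$, using $(\Delta_{\mathcal{B}}(\sigma))^{*}=\sigma^{op}$ together with $\mathrm{Im}(h^{op})=(\mathrm{Coim}\,h)^{op}$ and the identity $\psi(\sigma)^{*}=\mathrm{Coim}(G\sigma^{*}F\circ\eta)$.

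The main obstacle is bookkeeping rather than conceptual: one must keep straight that in the opposite pair the roles of $F$ and $G$, and of unit and counit, are swapped, so that $\overline{\varphi}$ is built from $\eta'$ (a counit) and $\overline{\psi}$ from $\varepsilon'$ (a unit); and one must apply the kernel/cokernel and image/coimage dualities with the correct variance, since an image computed in $\mathcal{A}^{op}$ corresponds to a quotient object of the original codomain in $\mathcal{A}$. Getting these variances right is precisely what makes the identity come out as $\Delta_{\mathcal{B}}\circ\varphi=\overline{\psi}\circ\Delta_{\mathcal{A}}$ (the coadjoint matching the adjoint of the opposite connection), rather than the naive $\Delta_{\mathcal{B}}\circ\varphi=\overline{\varphi}\circ\Delta_{\mathcal{A}}$.
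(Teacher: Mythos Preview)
Your argument is correct. For part (a) you carry out essentially the same unwinding as the paper: both of you pass from $\varphi(\tau)=\mathrm{Im}(\varepsilon\circ F\tau G)$ to $\varphi(\tau)^{*}=\mathrm{Coker}(\varepsilon\circ F\tau G)$, dualize to $\mathrm{Ker}\big((\varepsilon\circ F\tau G)^{op}\big)$ in $\mathrm{Fun}(\mathcal{B}^{op},\mathcal{B}^{op})$, and identify this with $\overline{\psi}$ evaluated at $(\tau^{*})^{op}=\Delta_{\mathcal{A}}(\tau)$ using $(\Delta_{\mathcal{A}}(\tau))^{*}=\tau^{op}$. You merely run the chain from the $\overline{\psi}$ side, the paper from the $\Delta_{\mathcal{B}}(\varphi(\tau))$ side; the content is the same.

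For part (b) your primary route is genuinely different from the paper's. The paper repeats the dual computation explicitly: it rewrites $(\psi(\sigma))^{op}$ as a cokernel in $\mathrm{Fun}(\mathcal{A}^{op},\mathcal{A}^{op})$, recognizes this as $\overline{\varphi}(\Delta_{\mathcal{B}}(\sigma))^{*}$, and then reads off the desired equality from the resulting SQ-sequence. You instead observe that conjugating $(\varphi,\psi)$ by the anti-isomorphisms $\Delta_{\mathcal{A}},\Delta_{\mathcal{B}}$ yields another Galois connection $Pr(\mathcal{B}^{op})\rightleftarrows Pr(\mathcal{A}^{op})$ whose adjoint part is, by (a), equal to $\overline{\psi}$; uniqueness of the coadjoint in a Galois connection then forces the coadjoint parts to agree, which is (b). This is shorter and more conceptual, and it makes transparent why (a) and (b) are equivalent once the $\Delta$'s are known to be order anti-isomorphisms. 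The paper's direct computation, on the other hand, is self-contained and mirrors (a), so it does not rely on the uniqueness principle for adjoints (which, to be fair, is elementary and follows from the very definition of a Galois connection). Your alternative sketch via $\mathrm{Im}(h^{op})=(\mathrm{Coim}\,h)^{op}$ and $\psi(\sigma)^{*}=\mathrm{Coim}(G\sigma^{*}F\circ\eta)$ is exactly the paper's route in slightly different language.
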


\begin{proof}
$(a)$ Let $\tau\in Pr(\mathcal{A})$ and consider the corresponding SQ-sequence:

\centerline{$\xymatrix{T\ar@{^(->}[r]^{\tau} & 1_{\mathcal{A}}\ar@{->>}[r]^{\tau^*} & T^*}.$}

Then, by definition, $\varphi(\tau)=\mathrm{Im}\Big(\xymatrix{FTG\ar[r]^{F\tau G} & FG\ar[r]^{\varepsilon} & 1_{\mathcal{B}}}\Big)$, thus $$\varphi(\tau)^*=\mathrm{Coker}\Big(\xymatrix{FTG\ar[r]^{F\tau G} & FG\ar[r]^{\varepsilon} & 1_{\mathcal{B}}}\Big)$$ 
Therefore, in $\mathrm{Fun}(\mathcal{B}^{op},\mathcal{B}^{op})$, we have 
\begin{align*}
    \Delta_{\mathcal{B}}(\varphi(\tau))&=(\varphi(\tau)^*)^{op}\\ &=\mathrm{Ker}\Big(\xymatrix{1_{\mathcal{B}^{op}}\ar[r]^{\varepsilon'} & F^{op}G^{op}\ar[rr]^(.5){F^{op}\tau^{op}G^{op}} & &  F^{op}T^{op}G^{op}}\Big)\\
    &=\overline{\psi}((\tau^*)^{op})=\overline{\psi}(\Delta_{\mathcal{A}}(\tau))
\end{align*}
The third equality results considering that in $\mathcal{A}^{op}$ we have $\tau^{op}=((\tau^*)^{op})^*$, and by definition of $\overline{\psi}$ as part of the Galois connection induced by the adjoint pair $(G^{op},F^{op})$.\\

$(b)$ Let $\sigma\in Pr(\mathcal{B})$ and consider the corresponding SQ-sequence in $\mathcal{B}$:

\centerline{$\xymatrix{S\ar@{^(->}[r]^{\sigma} & 1_{\mathcal{B}}\ar@{->>}[r]^{\sigma^*} & S^*}$}

Then, by definition, $$\psi(\sigma)=\mathrm{Ker}\Big(\xymatrix{1_{\mathcal{A}}\ar[r]^{\eta} & GF\ar[r]^(.4){G\sigma^*F} &  GS^* F)}\Big)$$ thus in $\mathrm{Fun}(\mathcal{A}^{op},\mathcal{A}^{op})$, we have 
\begin{align*}
(\psi(\sigma))^{op}&=\mathrm{Coker}\Big(\xymatrix{G^{op}(S^*)^{op}F^{op}\ar[rrr]^{G^{op}(\sigma^*)^{op}F^{op}} & & & G^{op}F^{op}\ar[r]^{\eta'} & 1_{\mathcal{A}^{op}}}\Big)\\
&= \overline{\varphi}((\sigma^*)^{op})^*=\overline{\varphi}(\Delta_{\mathcal{B}}(\sigma))^*
\end{align*}

The last equality results by definition of $\overline{\varphi}$ as part of the Galois connection induced by the adjoint pair $(G^{op},F^{op})$, and considering that $(\sigma^*)^{op}$ is a preradical in $Pr(\mathcal{B}^{op})$.\\

Therefore, we have a SQ-sequence in $\mathcal{A}^{op}$:

\centerline{$\xymatrix{\ T\ \ar[r]^{\overline{\varphi}(\Delta_{\mathcal{B}}(\sigma))} & \ \ 1_{\mathcal{A}^{op}}\ \ar@{->>}[r]^{(\psi(\sigma))^{op}} & \ T^*\ }$}
\vspace{.2in}
which induces a corresponding SQ-sequence in $\mathcal{A}$:

\centerline{$\xymatrix{\ (T^*)^{op}\ \ar[r]^{\psi(\sigma)} & \ 1_{\mathcal{A}}\ \ \ar@{->>}[r]^{\overline{\varphi}(\Delta_{\mathcal{B}}(\sigma))^{op}} & \ T^{op}\ }$}
\vspace{.2in}
Therefore $\psi(\sigma)^*=\overline{\varphi}(\Delta_{\mathcal{B}}(\sigma))^{op}$, so that $\Delta_{\mathcal{A}}(\psi(\sigma))=\overline{\varphi}(\Delta_{\mathcal{B}}(\sigma)).$

\end{proof}

\section{Alpha and omega preradicals}
As in the case of preradicals for a ring $R$, we can define alpha and omega preradicals in abelian categories, which have a special importance for all preradicals. In particular, these two classes of preradicals are connected respectively under the morphisms that are part of the Galois connection induced by an adjoint pair.\\

From now on, we assume that the abelian category $\mathcal{A}$ is also complete and cocomplete, i.e., bicomplete. Let $N,M$ be objects in $\mathcal{A}$.  We define a functor $\mathbb{A}_{M}^{N}\colon\mathcal{A}\longrightarrow \mathcal{A}$
\noindent as follows:
\begin{enumerate}[(a)]
\item  For any object $L\in \mathcal{A}$ we define $\mathbb{A}_{M}^{N}(L):=N^{(\mathrm{Hom}_{\mathcal{A}}(M,L))}$, the coproduct of copies of $N$ indexed by the set $\mathrm{Hom}_{\mathcal{A}}(M,L)$.

\item For any morphism $\lambda:L\longrightarrow L'$ we set:
$$\mathbb{A}_{M}^{N}(\lambda):N^{(\mathrm{Hom}_{\mathcal{A}}(M,L))}\longrightarrow N^{(\mathrm{Hom}_{\mathcal{A}}(M,L'))}$$
where $\mathbb{A}_{M}^{N}(\lambda)$ is the unique morphism such that for every $f\in \mathrm{Hom}_{\mathcal{A}}(M,L)$ the following diagram commutes:
\begin{equation}\label{defANM}
\begin{aligned}
\xymatrix{N^{(\mathrm{Hom}_{\mathcal{A}}(M,L))}\ar[rr]^{\mathbb{A}_{M}^{N}(\lambda)} & & N^{(\mathrm{Hom}_{\mathcal{A}}(M,L'))}\\
& N\ar[ul]^{i_{f}}\ar[ur]_{i'_{\lambda\circ f}}}
\end{aligned}
\end{equation}
where $i_{f}$ is the canonical inclusion for $f\in \mathrm{Hom}_{\mathcal{A}}(M,L)$ and $i'_{\lambda\circ f}$ is the canonical inclusion for  $\lambda\circ f\in \mathrm{Hom}_{\mathcal{A}}(M,L')$.

\end{enumerate}
\vspace{.2in}

In other words, this functor is obtained as the composition $\mathbb{A}_{M}^{N}=N^{(-)}\circ \mathrm{Hom}_{\mathcal{A}}(M,-)$, where
$\mathrm{Hom}_{\mathcal{A}}(M,-):\mathcal{A}\longrightarrow \mathrm{Set}$ is the covariant Hom functor
and $N^{(-)}:\mathrm{Set}\longrightarrow \mathcal{A}$ is the covariant functor defined by the coproduct and the universal property of the coproduct in $\mathcal{A}$. We need a natural transformation to define the corresponding alpha preradical.\\

\begin{proposition}\label{defagot}
Let $h:N\longrightarrow M$ be any morphism in $\mathcal{A}$. For each $L\in \mathcal{A}$ we define the morphism $[\agot_{h}]_{L}$ as the unique morphism such that for all $f\in \mathrm{Hom}_{\mathcal{A}}(M,L)$ the following diagram commutes:

\begin{equation}\label{agotL}
\begin{aligned}
\xymatrix{N^{(\mathrm{Hom}_{\mathcal{A}}(M,L))}\ar[rr]^{[\agot_{h}]_{L}} & & L\\
& N\ar[ul]^{i_{f}}\ar[ur]_{f\circ h}}
\end{aligned}
\end{equation}

\noindent Then $\agot_{h}:\mathbb{A}_{M}^{N}\longrightarrow 1_{\mathcal{A}}$ is a natural transformation.
\end{proposition}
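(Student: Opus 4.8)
The plan is to verify the naturality square directly, exploiting at each object the universal property of the coproduct that defines $\mathbb{A}_{M}^{N}$. First I would record that each $[\agot_{h}]_{L}$ is genuinely well-defined: since $\mathcal{A}$ is cocomplete, the coproduct $N^{(\mathrm{Hom}_{\mathcal{A}}(M,L))}$ exists, and the family of morphisms $\{f\circ h:N\to L\}_{f\in\mathrm{Hom}_{\mathcal{A}}(M,L)}$ determines a unique morphism out of it whose composite with the canonical inclusion $i_{f}$ is $f\circ h$ for every $f$; this is precisely the content of diagram \eqref{agotL}.

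To establish that $\agot_{h}$ is natural, I must show that for every morphism $\lambda:L\longrightarrow L'$ in $\mathcal{A}$ the identity
$$\lambda\circ[\agot_{h}]_{L}=[\agot_{h}]_{L'}\circ\mathbb{A}_{M}^{N}(\lambda)$$
holds. Both sides are morphisms whose domain is the coproduct $N^{(\mathrm{Hom}_{\mathcal{A}}(M,L))}$, so by the universal property of the coproduct it suffices to check that they agree after precomposition with each canonical inclusion $i_{f}$, where $f$ ranges over $\mathrm{Hom}_{\mathcal{A}}(M,L)$.

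The argument then reduces to chasing the two defining diagrams. On the left, diagram \eqref{agotL} gives $[\agot_{h}]_{L}\circ i_{f}=f\circ h$, so $\lambda\circ[\agot_{h}]_{L}\circ i_{f}=(\lambda\circ f)\circ h$. On the right, diagram \eqref{defANM} gives $\mathbb{A}_{M}^{N}(\lambda)\circ i_{f}=i'_{\lambda\circ f}$, and then applying diagram \eqref{agotL} at $L'$ to the morphism $\lambda\circ f\in\mathrm{Hom}_{\mathcal{A}}(M,L')$ yields $[\agot_{h}]_{L'}\circ i'_{\lambda\circ f}=(\lambda\circ f)\circ h$. Since both composites equal $(\lambda\circ f)\circ h$ for every $f$, the universal property forces the two morphisms to coincide, which is exactly the naturality of $\agot_{h}$.

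I do not expect any genuine obstacle: the proof is a routine diagram chase, and the only point requiring care is bookkeeping — keeping the two distinct families of coproduct inclusions straight (the $i_{f}$ into $\mathbb{A}_{M}^{N}(L)$ versus the $i'_{\lambda\circ f}$ into $\mathbb{A}_{M}^{N}(L')$) and invoking the universal property at the correct object at each step.
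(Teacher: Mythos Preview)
Your proposal is correct and follows essentially the same approach as the paper: both arguments verify the naturality square by precomposing each side with the canonical inclusions $i_f$ and observing that the result is $(\lambda\circ f)\circ h$ in either case, then invoking the uniqueness in the universal property of the coproduct. Your write-up is in fact slightly more explicit than the paper's about the intermediate step $\mathbb{A}_{M}^{N}(\lambda)\circ i_{f}=i'_{\lambda\circ f}$, but the logic is identical.
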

\begin{proof}
    
let us consider a morphism $\lambda:L\longrightarrow L'$. By definition of $\mathbb{A}_{M}^{N}$, we have diagram (\ref{defANM}). Also we have diagram (\ref{agotL}) and the corresponding diagram:

\begin{equation}\label{agotL'}
\begin{aligned}
\xymatrix{N^{(\mathrm{Hom}_{\mathcal{A}}(M,L'))}\ar[rr]^{[\agot_{h}]_{L'}} & & L'\\
& N\ar[ul]^{i_{\lambda\circ f}}\ar[ur]_{\lambda\circ f\circ h}}
\end{aligned}
\end{equation}

From these three diagrams we conclude that both morphisms $\lambda\circ [\agot_{h}]_{L}$ and $[\agot_{h}]_{L'}\circ\mathbb{A}_{M}^{N}(\lambda)$ complete for all $f\in\mathrm{Hom}(M,L)$ the commutative diagram:

\centerline{$\xymatrix{N^{(\mathrm{Hom}(M,L))}\ar@{-->}[rr]  & & L'\\
& N\ar[ul]^{i_{f}} \ar[ur]_{\lambda\circ f\circ h} & }$}

Therefore, by uniqueness in the universal property of the coproduct, we conclude that the following diagram commutes:

\centerline{$\xymatrix{
\mathbb{A}_{M}^{N}(L)\ar[d]_{\mathbb{A}_{M}^{N}(\lambda)}\ar[r]^{\ [\agot_{h}]_{L} } & L\ar[d]^{\lambda} \\
\mathbb{A}_{M}^{N}(L')\ar[r]_{\ [\agot_{h}]_{L'} } & L' } $}

\noindent which means that $\agot_{h}:\mathbb{A}_{M}^{N}\longrightarrow 1_{\mathcal{A}}$ is a natural transformation.

\end{proof}

\begin{definition}\label{definicionalpha}
Let $h:N\longrightarrow M$ be a morphism in $\mathcal{A}$. We define the {\bf alpha preradical} associated to $h$ as $\xymatrix{\alpha_{h}:=\mathrm{Im}(\agot_{h}):A_h\ar@{^(->}[r] & 1_{\mathcal{A}}}$ in $\mathcal{A}$.
 \end{definition}

 Thus we have for such morphism $h$ the following epi-mono factorization:

\begin{equation}\label{agotepimono}
\begin{aligned}
\xymatrix{\mathbb{A}_{M}^{N}\ar[rr]^{\agot_{h}}\ar@{->>}[dr]_{p_h} & & 1_{\mathcal{A}}\\
& A_h \ar@{^(->}[ur]_{\alpha_h}}
 \end{aligned}
 \end{equation}

 The following result will be useful for the next section.
\vspace{.2in}
 \begin{lemma}\label{alfaiso}
Let $h:N\longrightarrow M$ be an isomorphism in $\mathcal{A}$. Then $\alpha_h=\alpha_{1_M}$.
 \end{lemma}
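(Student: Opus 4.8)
The plan is to exhibit a natural isomorphism $\Phi\colon\mathbb{A}_M^N\longrightarrow\mathbb{A}_M^M$ induced by $h$ and to show that $\agot_h$ factors as $\agot_{1_M}\circ\Phi$. Since $\Phi$ will be an isomorphism, the two images $\alpha_h=\mathrm{Im}(\agot_h)$ and $\alpha_{1_M}=\mathrm{Im}(\agot_{1_M})$ must then coincide as subobjects of $1_{\mathcal{A}}$ in $\mathrm{Fun}(\mathcal{A},\mathcal{A})$.

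First I would define, for each $L\in\mathcal{A}$, the morphism $\Phi_L\colon N^{(\mathrm{Hom}_{\mathcal{A}}(M,L))}\longrightarrow M^{(\mathrm{Hom}_{\mathcal{A}}(M,L))}$ as the unique morphism satisfying $\Phi_L\circ i_f=\iota_f\circ h$ for every $f\in\mathrm{Hom}_{\mathcal{A}}(M,L)$, where $i_f$ and $\iota_f$ denote the canonical inclusions into the coproducts $\mathbb{A}_M^N(L)$ and $\mathbb{A}_M^M(L)$ respectively. In other words, $\Phi_L$ is the coproduct of copies of $h$ taken along the common index set $\mathrm{Hom}_{\mathcal{A}}(M,L)$; its naturality in $L$ follows by comparing the two composites $\mathbb{A}_M^M(\lambda)\circ\Phi_L$ and $\Phi_{L'}\circ\mathbb{A}_M^N(\lambda)$ on each summand using diagram \eqref{defANM} and invoking the uniqueness in the universal property of the coproduct, exactly as in the proof of Proposition \ref{defagot}. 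Because $h$ is an isomorphism, the coproduct of copies of $h^{-1}$ furnishes a two-sided inverse of each $\Phi_L$, so $\Phi$ is a natural isomorphism.

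Next I would verify the factorization $\agot_h=\agot_{1_M}\circ\Phi$ one component at a time. By the defining diagram \eqref{agotL} applied to $1_M$ we have $[\agot_{1_M}]_L\circ\iota_f=f\circ 1_M=f$, hence $[\agot_{1_M}]_L\circ\Phi_L\circ i_f=[\agot_{1_M}]_L\circ\iota_f\circ h=f\circ h$. On the other hand, \eqref{agotL} gives $[\agot_h]_L\circ i_f=f\circ h$. Since $[\agot_{1_M}]_L\circ\Phi_L$ and $[\agot_h]_L$ agree after precomposition with every canonical inclusion $i_f$, the universal property of the coproduct forces $[\agot_{1_M}]_L\circ\Phi_L=[\agot_h]_L$ for all $L$, i.e. $\agot_h=\agot_{1_M}\circ\Phi$ as natural transformations.

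Finally I would conclude. As $\Phi$ is an isomorphism it is in particular an epimorphism, so Lemma \ref{lemimker}(a) applied to $\agot_{1_M}\circ\Phi$ yields $\mathrm{Im}(\agot_h)\leq\mathrm{Im}(\agot_{1_M})$; applying the same lemma to $\agot_{1_M}=\agot_h\circ\Phi^{-1}$ gives the reverse inequality. Therefore $\alpha_h=\mathrm{Im}(\agot_h)=\mathrm{Im}(\agot_{1_M})=\alpha_{1_M}$ as subobjects of $1_{\mathcal{A}}$, which is the claim. I expect the only delicate point to be the bookkeeping with the canonical inclusions in defining $\Phi$ and checking its naturality; once $\agot_h=\agot_{1_M}\circ\Phi$ is in hand, the equality of images is immediate and robust, and indeed the same argument would work whenever $h$ is replaced by any morphism inducing an epimorphism $\Phi$, with $h$ being an isomorphism making the inverse inequality transparent as well.
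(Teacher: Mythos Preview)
Your proof is correct but takes a genuinely different route from the paper's. The paper argues pointwise via cokernels: for each $L\in\mathcal{A}$ and each morphism $\delta:L\to K$, it observes that $\delta\circ[\agot_h]_L=0$ if and only if $\delta\circ f\circ h=0$ for all $f:M\to L$, which (since $h$ is an isomorphism) is equivalent to $\delta\circ f=0$ for all such $f$, i.e.\ to $\delta\circ[\agot_{1_M}]_L=0$. Hence $[\agot_h]_L$ and $[\agot_{1_M}]_L$ share the same cokernel, and therefore the same image, for every $L$. Your approach is instead constructive: you build the natural isomorphism $\Phi:\mathbb{A}_M^N\to\mathbb{A}_M^M$ as the coproduct of copies of $h$ and verify the factorization $\agot_h=\agot_{1_M}\circ\Phi$ componentwise, whence equality of images follows. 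The paper's argument is slightly leaner, avoiding the naturality check for $\Phi$; your argument is more structural and, as you correctly note at the end, immediately yields the one-sided inequality $\alpha_h\leq\alpha_{1_M}$ whenever $h$ is merely a (split) epimorphism, with the isomorphism hypothesis used only to reverse it.
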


 \begin{proof}
 For any isomorphism $h:N\longrightarrow M$, let $L\in\mathcal{A}$ and let $\delta:L\longrightarrow K$ be any morphism. We claim that $\delta\circ [\agot_{h}]_{L}=0$ if, and only if, $\delta\circ [\agot_{1_M}]_{L}=0$. So let us consider, for each morphism $f:M\rightarrow L$, the commutative triangle (\ref{agotL}) and the following corresponding commutative triangle for $1_M$:

\centerline{$
\xymatrix{M^{(\mathrm{Hom}_{\mathcal{A}}(M,L))}\ar[rr]^{[\agot_{1_M}]_{L}} & & L\\
& M\ar[ul]^{i_{f}}\ar[ur]_{f}}$}

Since $h$ is an isomorphism, we have that $\delta\circ f\circ h=0$ if, and only if, $\delta\circ f=0$. This implies that $\delta\circ [\agot_{h}]_{L}=0$ if, and only if, $\delta\circ [\agot_{1_M}]_{L}=0$, and therefore $\mathrm{Coker}([\agot_{h}]_{L})=\mathrm{Coker}([\agot_{1_M}]_L)$, and since this equality holds for each $L\in\mathcal{A}$, we conclude that $\alpha_h=\mathrm{Im}([\agot_{h}])=\mathrm{Im}([\agot_{1_M}])=\alpha_{1_M}$.

 \end{proof}
 
 We shall prove that if $(\varphi, \psi)$ is the Galois conection induced by an adjoint pair, alpha preradicals are mapped to alpha preradicals under $\varphi$. First we need some previous results.
\vspace{.2in}

\begin{lemma}\label{alphaXi}
Let $(F,G):\mathcal{A}\longrightarrow \mathcal{B}$ be an adjoint pair between bicomplete abelian categories.
Let $h:N\longrightarrow M$ be a morphism in $\mathcal{A}$. Then, we have that
$\varphi(\alpha_{h})=\mathrm{Im}\Big(\xymatrix{F\mathbb{A}_{M}^{N}G\ar[r]^{F\agot_{h}G} & FG\ar[r]^{\epsilon} & 1_{\mathcal{B}}}\Big)$.
\end{lemma}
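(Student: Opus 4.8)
The plan is to exploit the epi--mono factorization of $\agot_h$ recorded in diagram \eqref{agotepimono} together with the definition of $\varphi$ from Theorem \ref{thmgaloisconexion1}. Applying that definition to the preradical $\alpha_h$ gives $\varphi(\alpha_h)=\mathrm{Im}\big(\varepsilon\circ F\alpha_h G\big)$, where $\varepsilon\circ F\alpha_h G\colon FA_hG\to FG\to 1_{\mathcal{B}}$. On the other hand, diagram \eqref{agotepimono} gives $\agot_h=\alpha_h\circ p_h$ with $p_h$ an epimorphism in $\mathrm{Fun}(\mathcal{A},\mathcal{A})$, and since the star composition $F(-)G$ is functorial, it preserves this factorization, so $F\agot_h G=F\alpha_h G\circ Fp_hG$. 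Thus it suffices to show that precomposing $\varepsilon\circ F\alpha_h G$ by $Fp_hG$ does not change its image, i.e., that $\mathrm{Im}(\varepsilon\circ F\alpha_h G\circ Fp_hG)=\mathrm{Im}(\varepsilon\circ F\alpha_h G)$.

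First I would verify that $Fp_hG$ is an epimorphism in the (extended) abelian category $\mathrm{Fun}(\mathcal{B},\mathcal{B})$. Being an epimorphism of functors is a pointwise condition, so $(p_h)_A$ is an epimorphism in $\mathcal{A}$ for every $A\in\mathcal{A}$; in particular $(p_h)_{G(B)}$ is an epimorphism for every $B\in\mathcal{B}$. Since $F$ is a left adjoint it preserves cokernels and hence epimorphisms, so each component $(Fp_hG)_B=F\big((p_h)_{G(B)}\big)$ is an epimorphism in $\mathcal{B}$, whence $Fp_hG$ is an epimorphism.

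Finally, I would record the elementary refinement of Lemma \ref{lemimker}(a): in an abelian category, if $e$ is an epimorphism and $g$ an arbitrary morphism, then $\mathrm{Im}(g\circ e)=\mathrm{Im}(g)$. Indeed, taking the epi--mono factorization $g=u\circ q$ with $u=\mathrm{Im}(g)$, the composite $g\circ e=u\circ(q\circ e)$ is again an epi--mono factorization because $q\circ e$ is a composition of epimorphisms. Applying this with $e=Fp_hG$ and $g=\varepsilon\circ F\alpha_h G$ inside $\mathrm{Fun}(\mathcal{B},\mathcal{B})$ yields $\mathrm{Im}(\varepsilon\circ F\agot_h G)=\mathrm{Im}(\varepsilon\circ F\alpha_h G)=\varphi(\alpha_h)$, as desired. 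The only genuinely non-formal step is the preservation of epimorphisms by the left adjoint $F$; everything else is bookkeeping with the star composition and the defining epi--mono factorization of the image.
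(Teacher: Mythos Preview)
Your argument is correct and essentially identical to the paper's proof: both start from $\varphi(\alpha_h)=\mathrm{Im}(\varepsilon\circ F\alpha_hG)$, invoke the factorization $\agot_h=\alpha_h\circ p_h$, use that the left adjoint $F$ preserves epimorphisms, and conclude that precomposition by the epimorphism $Fp_hG$ leaves the image unchanged. The only cosmetic difference is that the paper carries out the computation pointwise at each $B\in\mathcal{B}$, whereas you phrase it globally in $\mathrm{Fun}(\mathcal{B},\mathcal{B})$.
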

\begin{proof}
By definition of $\varphi$ we have $\varphi(\alpha_{h})=\mathrm{Im}\Big(\xymatrix{FA_{h}G\ar[r]^{F\alpha_{h}G} & FG\ar[r]^{\varepsilon} & 1_{\mathcal{B}}}\Big)$.

Since $F$ is left adjoint to $G$, it preserves epimorphisms. Therefore, for each $B\in \mathcal{B}$, $F([p_{h}]_{G(B)})$ is an epimorphism, and we have: 

{\vspace{-15pt}
\begin{align*}
&\varphi(\alpha_{h})(B)=\\
&= \mathrm{Im}\Big(\xymatrix{(FA_{h}G)(B)\ar[rr]^{F([\alpha_{h}]_{G(B)})} & & FG(B)\ar[r]^{\varepsilon_{B}} & B} \Big)\\
 &=\mathrm{Im}\Big(\xymatrix{(F\mathbb{A}_{M}^{N}G)(B)\ar[rr]^{F([p_{h}]_{G(B)})}  & & 
 (FA_hG)(B)\ar[rr]^{F([\alpha_{h}]_{G(B)})} & &
 FG(B)\ar[r]^{\varepsilon_{B}} & B} \Big)\\
&= \mathrm{Im}\Big(\xymatrix{(F\mathbb{A}_{M}^{N}G)(B)\ar[rr]^{F([\agot_{h}]_{G(B)})} & &  FG(B)\ar[r]^{\varepsilon_{B}} & B} \Big).
\end{align*}}

Therefore, $\varphi(\alpha_{h})=\mathrm{Im}\Big(\xymatrix{F\mathbb{A}_{M}^{N}G\ \ar[r]^{F\agot_{h}G} & FG\ar[r]^{\epsilon} & 1_{\mathcal{B}}}\Big)$.
\end{proof} 
\vspace{.2in}

\begin{lemma}\label{unisocop}
Let $(F,G):\mathcal{A}\longrightarrow \mathcal{B}$ be an adjoint pair between bicomplete abelian categories.
Let $M,N$ be objects in $\mathcal{A}$. Then, there exists a natural isomorphism $\gamma: \mathbb{A}_{F(M)}^{F(N)}\longrightarrow F\mathbb{A}_{M}^{N}G$ such that the following diagram commutes in $\mathrm{Fun}(\mathcal{B},\mathcal{B})$:

\begin{equation}\label{gammacomm}
\begin{aligned}
\xymatrix{F\mathbb{A}_{M}^{N}G\ar[rr]^{F\circ \agot_{h}\circ G} & & FG\ar[r]^{\epsilon} & 1_{\mathcal{B}}\\
\mathbb{A}_{F(M)}^{F(N)}\ar[urrr]_{\agot_{F(h)}}\ar[u]^{\gamma} & }
\end{aligned}
\end{equation}
\end{lemma}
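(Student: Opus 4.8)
The plan is to exhibit $\gamma$ as a composite of two canonical isomorphisms, and then to verify the triangle \eqref{gammacomm} by the universal property of the coproduct. Fix $B\in\mathcal{B}$. On objects we have $\mathbb{A}_{F(M)}^{F(N)}(B)=F(N)^{(\mathrm{Hom}_{\mathcal{B}}(F(M),B))}$, while $(F\mathbb{A}_{M}^{N}G)(B)=F\big(N^{(\mathrm{Hom}_{\mathcal{A}}(M,G(B)))}\big)$. Two features of the adjunction connect these. First, the adjunction bijection $\theta_{B}:\mathrm{Hom}_{\mathcal{B}}(F(M),B)\to\mathrm{Hom}_{\mathcal{A}}(M,G(B))$, $\theta_{B}(g)=G(g)\circ\eta_{M}$, reindexes the coproduct and yields an isomorphism $F(N)^{(\theta_{B})}:F(N)^{(\mathrm{Hom}_{\mathcal{B}}(F(M),B))}\to F(N)^{(\mathrm{Hom}_{\mathcal{A}}(M,G(B)))}$. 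Second, since $F$ is a left adjoint it preserves coproducts, so for any set $X$ there is a canonical isomorphism $\kappa:F(N)^{(X)}\to F\big(N^{(X)}\big)$ determined on inclusions by $\kappa\circ i_{x}=F(j_{x})$, where $i_{x}$ and $j_{x}$ are the canonical inclusions in $\mathcal{B}$ and $\mathcal{A}$ respectively. I would set $\gamma_{B}:=\kappa\circ F(N)^{(\theta_{B})}$, a composite of isomorphisms, so that $\gamma_{B}\circ i_{g}=F(j_{\theta_{B}(g)})$ for each $g\in\mathrm{Hom}_{\mathcal{B}}(F(M),B)$.

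Naturality of $\gamma$ in $B$ I would check on inclusions. For a morphism $b:B\to B'$ and an index $g$, the map $(F\mathbb{A}_{M}^{N}G)(b)\circ\gamma_{B}\circ i_{g}$ reduces, via Definition \eqref{defANM} applied to $\lambda=G(b)$ in $\mathcal{A}$, to $F(j_{G(b)\circ\theta_{B}(g)})$, while $\gamma_{B'}\circ\mathbb{A}_{F(M)}^{F(N)}(b)\circ i_{g}$ reduces, via Definition \eqref{defANM} applied to $\lambda=b$ in $\mathcal{B}$, to $F(j_{\theta_{B'}(b\circ g)})$. These agree by naturality of $\theta$ in $B$, namely $G(b)\circ\theta_{B}(g)=G(b\circ g)\circ\eta_{M}=\theta_{B'}(b\circ g)$. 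As each component is an isomorphism, $\gamma$ is a natural isomorphism.

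For the triangle \eqref{gammacomm} it again suffices, by the universal property of the coproduct, to test both legs against each inclusion $i_{g}:F(N)\to\mathbb{A}_{F(M)}^{F(N)}(B)$. Writing $f:=\theta_{B}(g)$, the upper leg gives $\gamma_{B}\circ i_{g}=F(j_{f})$; applying $(F\agot_{h}G)_{B}=F([\agot_{h}]_{G(B)})$ and using diagram \eqref{agotL} at $L=G(B)$, where $[\agot_{h}]_{G(B)}\circ j_{f}=f\circ h$, yields $F(f\circ h)=F(f)\circ F(h)$; composing with $\varepsilon_{B}$ gives $\varepsilon_{B}\circ F(f)\circ F(h)$. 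The counit description of the inverse adjunction bijection, $\varepsilon_{B}\circ F(f)=\theta_{B}^{-1}(f)=g$, then collapses this to $g\circ F(h)$. The lower leg $\agot_{F(h)}$, by diagram \eqref{agotL} for the morphism $F(h):F(N)\to F(M)$ at $L=B$, satisfies $[\agot_{F(h)}]_{B}\circ i_{g}=g\circ F(h)$. Since the two legs agree on every $i_{g}$, the triangle commutes.

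The argument is essentially formal; the only real care needed is in the bookkeeping of the two isomorphisms forming $\gamma$ and in invoking the counit identity $\varepsilon_{B}\circ F(f)=\theta_{B}^{-1}(f)$ at exactly the right moment. This is the one place where the adjunction, rather than mere functoriality of $F$, enters, and it is what makes the index set $\mathrm{Hom}_{\mathcal{B}}(F(M),B)$ collapse correctly onto the evaluation $g\circ F(h)$. I expect this matching step, though not deep, to be the main obstacle to a clean write-up.
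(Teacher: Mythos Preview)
Your proposal is correct and follows essentially the same approach as the paper: you define $\gamma_B$ so that $\gamma_B\circ i_g=F(j_{\theta_B(g)})$ (the paper does this in one step via the universal property rather than factoring through the reindexing and the coproduct-preservation isomorphism), and you verify the triangle on coproduct inclusions using the counit description of the adjunction, which the paper phrases as ``naturality of $\varepsilon$ and a triangle identity.'' Your write-up is in fact more explicit than the paper's on the naturality of $\gamma$.
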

\begin{proof}
Let $B\in\mathcal{B}$ and let us consider the coproduct that consists of all morphisms 

\centerline{$i_{g}:N\rightarrow  N^{(\mathrm{Hom}(M,G(B)))} $ for $g\in \mathrm{Hom}(M,G(B)).$}
\vspace{.2in}
\noindent Since $F$ is left adjoint, it preserves coproducts and we can consider the coproduct that consists of all morphisms

\centerline{$F(i_{g}):F(N)\rightarrow F\Big( N^{(\mathrm{Hom}(M,G(B)))}\Big)$ for $g\in \mathrm{Hom}(M,G(B)).$} 
\vspace{.2in}
\noindent On the other hand, we have the coproduct that consists of all morphisms

\centerline{$\Lambda_{f}:F(N)\rightarrow (F(N))^{(\mathrm{Hom}(F(M),B))}$ for $f\in\mathrm{Hom}(F(M),B).$} 
\vspace{.2in}
\noindent Being $(F,G)$ an adjoint pair, we have a bijection

\centerline{$\mathrm{Hom}(F(M),B)\longrightarrow \mathrm{Hom}(M,G(B)),\quad f\mapsto G(f)\circ \eta_{M}.$}
\vspace{.2in}
\noindent Thus, there exists an isomorphism $\gamma_{B}$ such that, for each $f\in\mathrm{Hom}(F(M),B)$ the following diagram commutes:

\centerline{$\xymatrix{F(N)^{(\mathrm{Hom}(F(M),B))}\ar[rr]^{\gamma_{B}} & & F\Big( N^{(\mathrm{Hom}(M,G(B)))}\Big)\\
& F(N)\ar[ul]^{\Lambda_{f}}\ar[ur]_{F(i_{G(f)\circ \eta_{M}})} & }$}
\vspace{.2in}
\noindent It is easy to verify that this defines a natural isomorphism:

\centerline{$\gamma: \mathbb{A}_{F(M)}^{F(N)}\longrightarrow F\mathbb{A}_{M}^{N}G.$}
\vspace{.2in}
On the other hand, by definition of $\agot_{h}:\mathbb{A}_{M}^{N}\longrightarrow 1_{\mathcal{A}}$, we have for all $f\in\mathrm{Hom}(F(M),B)$ the following commutative diagram:

\centerline{$\xymatrix{F\Big( N^{(\mathrm{Hom}(M,G(B)))}\Big)\ar[rr]^{F([\agot_h]_{GB})} & & F(G(B))\\
& F(N)\ar[ul]^{F(i_{G(f)\circ \eta_{M}})} \ar[ur]_{F(G(f)\circ \eta_M\circ h)} & }$}
\vspace{.2in}
\noindent And the naturality of $\varepsilon$ and a triangle identity gives for all $f\in\mathrm{Hom}(F(M),B)$ the commutative diagram:

\centerline{$\xymatrix{F(G(B))\ar[rr]^{\varepsilon_B}  & & B\\
& F(N)\ar[ul]^{F(G(f)\circ \eta_M\circ h)} \ar[ur]_{f\circ F(h)} & }$}
\vspace{.2in}
Joining the three last commutative diagrams, we see that both morphisms $[\agot_{F(h)}]_B$ and $\varepsilon_B\circ F([\agot_h]_{GB})\circ \gamma_B$ complete for all $f\in\mathrm{Hom}(F(M),B)$ the commutative diagram:

\centerline{$\xymatrix{F(N)^{(\mathrm{Hom}(F(M),B))}\ar@{-->}[rr]  & & B\\
& F(N)\ar[ul]^{\Lambda_{f}} \ar[ur]_{f\circ F(h)} & }$}
\vspace{.2in}
Thus, by uniqueness in the universal property of the coproduct, we conclude that diagram (\ref{gammacomm}) commutes.

\end{proof} 

\begin{proposition}\label{varphialpha=alpha}
Let $(F,G):\mathcal{A}\longrightarrow \mathcal{B}$ be an adjoint pair between bicomplete abelian categories and let $(\varphi,\psi):Pr(\mathcal{A})\longrightarrow Pr(\mathcal{B})$ be the induced Galois connection. If $h:N\longrightarrow M$ is any morphism in $\mathcal{A}$, then $\varphi(\alpha_{h})=\alpha_{F(h)}$.
\end{proposition}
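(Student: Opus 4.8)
The plan is to combine the two preceding lemmas, which together do almost all the work. Lemma~\ref{alphaXi} rewrites $\varphi(\alpha_h)$ using the functor $\mathbb{A}_M^N$ directly rather than its image $A_h$: we have
$$\varphi(\alpha_{h})=\mathrm{Im}\Big(\xymatrix{F\mathbb{A}_{M}^{N}G\ar[r]^{F\agot_{h}G} & FG\ar[r]^{\epsilon} & 1_{\mathcal{B}}}\Big).$$
On the other hand, by Definition~\ref{definicionalpha} applied to the morphism $F(h):F(N)\longrightarrow F(M)$ in $\mathcal{B}$, the target alpha preradical is
$$\alpha_{F(h)}=\mathrm{Im}(\agot_{F(h)})=\mathrm{Im}\Big(\xymatrix{\mathbb{A}_{F(M)}^{F(N)}\ar[r]^(.6){\agot_{F(h)}} & 1_{\mathcal{B}}}\Big).$$
So the whole statement reduces to showing these two images in $\mathrm{Fun}(\mathcal{B},\mathcal{B})$ coincide.

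First I would invoke Lemma~\ref{unisocop}, which supplies a natural isomorphism $\gamma:\mathbb{A}_{F(M)}^{F(N)}\longrightarrow F\mathbb{A}_{M}^{N}G$ fitting into the commutative triangle~\eqref{gammacomm}, namely $\agot_{F(h)}=\varepsilon\circ(F\agot_h G)\circ\gamma$. The key observation is then elementary: precomposing a natural transformation with a (natural) isomorphism does not change its image. Concretely, since $\gamma$ is an isomorphism in the abelian category $\mathrm{Fun}(\mathcal{B},\mathcal{B})$, each component $\gamma_B$ is an isomorphism in $\mathcal{B}$, and for any morphism $\beta$ one has $\mathrm{Im}(\beta\circ\gamma_B)=\mathrm{Im}(\beta)$ because an isomorphism is epi and the epi-mono factorization is unchanged by composing with an epimorphism on the right.

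Putting these together, I would compute
$$\alpha_{F(h)}=\mathrm{Im}(\agot_{F(h)})=\mathrm{Im}\big(\varepsilon\circ(F\agot_h G)\circ\gamma\big)=\mathrm{Im}\big(\varepsilon\circ(F\agot_h G)\big)=\varphi(\alpha_{h}),$$
where the second equality is~\eqref{gammacomm}, the third uses that $\gamma$ is an isomorphism, and the last is Lemma~\ref{alphaXi}. This establishes the claim.

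I do not expect a serious obstacle here, since the two heavy lemmas have already been proved; the only point requiring a word of care is the intermediate equality $\mathrm{Im}(\beta\circ\gamma)=\mathrm{Im}(\beta)$ for an isomorphism $\gamma$. If one wants to be fully rigorous about working with equivalence classes of subobjects rather than literal objects, I would note that $\gamma$ being a natural isomorphism means it is componentwise an isomorphism, so the correstriction of $\beta\circ\gamma$ to its image factors through the correstriction of $\beta$ via an isomorphism; hence the two monomorphisms into $1_{\mathcal{B}}$ represent the same subobject, i.e.\ the same preradical. This is exactly the kind of image-stability argument already used (via Lemma~\ref{lemimker}) in the proof of Theorem~\ref{thmgaloisconexion1} to show $\varphi$ is well defined, so it fits the style of the paper.
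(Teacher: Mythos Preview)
Your proof is correct and follows exactly the same approach as the paper: invoke Lemma~\ref{unisocop} to obtain the natural isomorphism $\gamma$ and the commutative triangle~\eqref{gammacomm}, use that precomposition with an isomorphism preserves images, and finish with Lemma~\ref{alphaXi}. The paper's proof is the one-line version of what you wrote; your added remarks about why $\mathrm{Im}(\beta\circ\gamma)=\mathrm{Im}(\beta)$ for an isomorphism $\gamma$ simply spell out what the paper leaves implicit.
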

\begin{proof}
Since $\gamma: \mathbb{A}_{F(M)}^{F(N)}\longrightarrow F\mathbb{A}_{M}^{N}G$ is an isomorphism, by Lemma \ref{unisocop} we have that $\alpha_{F(h)}=\mathrm{Im}(\agot_{F(h)})= \mathrm{Im}\Big(\xymatrix{F\mathbb{A}_{M}^{N}G\ar[rr]^{F\circ \agot_{h}\circ G} & & FG\ar[r]^{\epsilon} & 1_{\mathcal{B}}}\Big)=\varphi(\alpha_{h})$, the last equality by Lemma \ref{alphaXi}.

\end{proof} 

Now our goal is to prove that alpha preradicals associated to isomorphisms are idempotent. We need some previous results. The first one is a sufficient condition for a preradical to be idempotent.
\vspace{.2in}

\begin{lemma}\label{idempotente1}
Let $\sigma: H\longrightarrow 1_{\mathcal{A}}$ be a natural transformation and $\xymatrix{\tau: T\ar@{^(->}[r] & 1_{\mathcal{A}}}$ be such that $\tau=\mathrm{Im}(\sigma)$.  If $H\tau$ is an epimorphism in $\mathrm{Fun}(\mathcal{A},\mathcal{A})$, then $\tau$ is idempotent.
\end{lemma}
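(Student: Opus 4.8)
The plan is to reduce idempotency to an epimorphism check by means of the Remark following diagram~\eqref{idem}, which states that $\tau$ is idempotent if and only if $T\tau=\tau T\colon T^{2}\longrightarrow T$ is an isomorphism. Here the equality $T\tau=\tau T$ already holds (the representative of $\tau$ being a monomorphism), and its components are the monomorphisms $\tau_{T(A)}$; since a natural transformation whose components are all monic is itself monic, $T\tau=\tau T$ is a monomorphism in $\mathrm{Fun}(\mathcal{A},\mathcal{A})$. Consequently it will suffice to prove that $T\tau$ is an \emph{epimorphism}: then, $\mathrm{Fun}(\mathcal{A},\mathcal{A})$ being abelian, a morphism that is simultaneously monic and epic is an isomorphism, and the Remark yields that $\tau$ is idempotent.

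First I would fix the epi--mono factorization realizing the hypothesis $\tau=\mathrm{Im}(\sigma)$, namely $\sigma=\tau\circ p$ with $p\colon H\longrightarrow T$ an epimorphism (the correstriction of $\sigma$ onto its image) and $\tau\colon T\longrightarrow 1_{\mathcal{A}}$ the monomorphism representing the subobject. The decisive step is then to evaluate the naturality of $p$ \emph{at the components of} $\tau$. For each object $A$, the naturality square of $p$ applied to the morphism $\tau_{A}\colon T(A)\longrightarrow A$ reads $p_{A}\circ H(\tau_{A})=T(\tau_{A})\circ p_{T(A)}$, and reading both sides as components of star compositions this is exactly the identity of natural transformations
\[
p\circ H\tau \;=\; T\tau\circ pT .
\]

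With this identity the conclusion follows at once. The left-hand side is a composite of two epimorphisms: $p$ is epic by the factorization, while $H\tau$ is epic by hypothesis; hence $p\circ H\tau$ is an epimorphism. Therefore $T\tau\circ pT$ is an epimorphism, and since in any category $g\circ f$ being epic forces its outer factor $g$ to be epic, we deduce that $T\tau$ is an epimorphism. Combined with the monomorphism property from the first paragraph, $T\tau=\tau T$ is an isomorphism, so $\tau$ is idempotent.

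I do not anticipate a genuine obstacle, as the whole argument is purely categorical and uses the hypothesis exactly once, to make $H\tau$ epic. The only point requiring care is the bookkeeping of the two star compositions $H\tau$ (the functor $H$ applied to the transformation $\tau$) and $pT$ (the transformation $p$ star-composed with the functor $T$), together with the orientation of the naturality square, so that the resulting identity presents $T\tau$ as the outer factor $g$ in a composite $g\circ f=p\circ H\tau$. It is precisely this placement that lets the cancellation ``$g\circ f$ epic $\Rightarrow g$ epic'' deliver that $T\tau$, rather than $pT$, is the epimorphism we need.
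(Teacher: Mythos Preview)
Your proof is correct and follows essentially the same approach as the paper's: factor $\sigma=\tau\circ p$, establish the identity $p\circ H\tau=T\tau\circ pT$, and conclude that $T\tau=\tau T$ is epic (hence an isomorphism). The only cosmetic difference is that the paper obtains this identity from the naturality of $\sigma$ (writing $\tau\circ\pi\circ H\tau=\tau\circ\sigma T$ and cancelling the mono $\tau$), whereas you obtain it directly from the naturality of $p$; the resulting equation and the rest of the argument are identical.
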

\begin{proof}
Consider the following diagram 

\centerline{$\xymatrix{ & T^{2}\ar@{^(->}[dr]^{\tau T}\ar@{}[d]|{II} & \\
HT\ar[rr]^{\sigma T}\ar[dr]_{H\tau}\ar@{->>}[ur]^{\pi\tau} & & T\ar@{^(->}[dr]^{\tau}\ar@{}[d]|{I} & \\
& H\ar[rr]_{\sigma}\ar@{->>}[ur]^{\pi}\ar@{}[u]|{III} & & 1_{\mathcal{A}}}$}
\vspace{.2in}
\noindent constructed as follows. Triangle $I$ is just the factorization of $\sigma$ through its image. Triangle $II$ is obtained by composing with $T$ by the right to triangle $I$, so it is commutative. The square formed by triangles $III$ and $I$ is commutative by the naturality of $\sigma$. Thus $\tau \circ \pi \circ H\tau=\sigma \circ H\tau=\tau\circ \sigma T $, and since $\tau$ is a monomorphism, $\pi \circ H\tau=\sigma T$, so that triangle $III$ is also commutative. If $H\tau$ is an epimorphism, then $\sigma T$ is an epimorphism, so that $\tau T$ is an epimorphism and hence an isomorphism. This proves that $\tau$ is idempotent.
\end{proof} 
\vspace{.2in}
\begin{lemma}\label{isobonh}
Let $h:N\longrightarrow M$ be an isomorphism in $\mathcal{A}$. Then there exists a natural isomorphism

\centerline{$\Phi:\mathrm{Hom}_{\mathcal{A}}(M, A_{h}(-))\longrightarrow \mathrm{Hom}_{\mathcal{A}}(M,-)$.}

\end{lemma}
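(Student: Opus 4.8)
The plan is to realize $\Phi$ as nothing more than post-composition with the monomorphism $\alpha_h$. Concretely, for each $L\in\mathcal{A}$ I would set
$$\Phi_L\colon \mathrm{Hom}_{\mathcal{A}}(M,A_h(L))\longrightarrow \mathrm{Hom}_{\mathcal{A}}(M,L),\qquad \Phi_L(\beta)=[\alpha_h]_L\circ\beta,$$
using the component $[\alpha_h]_L\colon A_h(L)\hookrightarrow L$ of the alpha preradical coming from the factorization (\ref{agotepimono}). That these components assemble into a natural transformation $\Phi\colon \mathrm{Hom}_{\mathcal{A}}(M,A_h(-))\to\mathrm{Hom}_{\mathcal{A}}(M,-)$ is immediate from the naturality of $\alpha_h$: for any $\lambda\colon L\to L'$ the square relating $\Phi_L$ and $\Phi_{L'}$ commutes precisely because $\lambda\circ[\alpha_h]_L=[\alpha_h]_{L'}\circ A_h(\lambda)$, which is the naturality square of $\alpha_h\colon A_h\to 1_{\mathcal{A}}$.

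It then remains to verify that each $\Phi_L$ is bijective. Injectivity is automatic: $[\alpha_h]_L$ is a monomorphism, being an image inclusion, and post-composition with a monomorphism is injective on Hom-sets. The one step with genuine content is surjectivity, and this is exactly where the hypothesis that $h$ is an isomorphism is used. Given $f\colon M\to L$, I would build an explicit preimage from the universal property defining $\agot_h$, namely $[\agot_h]_L\circ i_g=g\circ h$ for every $g\in\mathrm{Hom}_{\mathcal{A}}(M,L)$ (diagram (\ref{agotL})). Taking $g=f$ and re-indexing through $h^{-1}$, I set $\beta:=[p_h]_L\circ i_f\circ h^{-1}\colon M\to A_h(L)$ and compute, using $[\agot_h]_L=[\alpha_h]_L\circ[p_h]_L$,
$$\Phi_L(\beta)=[\alpha_h]_L\circ[p_h]_L\circ i_f\circ h^{-1}=[\agot_h]_L\circ i_f\circ h^{-1}=(f\circ h)\circ h^{-1}=f.$$
Hence $\Phi_L$ is onto, so each $\Phi_L$ is a bijection and $\Phi$ is the desired natural isomorphism.

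I expect the main (and essentially only) obstacle to be bookkeeping rather than any conceptual difficulty: one must keep track that $i_f\circ h^{-1}\colon M\to \mathbb{A}_{M}^{N}(L)$ has the correct domain and that composing with $h^{-1}$ is precisely what converts the map $f\circ h\colon N\to L$ supplied by the universal property back into $f\colon M\to L$. As a cleaner conceptual alternative I would note that Lemma \ref{alfaiso} lets one reduce at once to the case $h=1_M$, where $A_h=A_{1_M}$ and the identity $[\agot_{1_M}]_L\circ i_f=f$ directly exhibits every $f$ as factoring through $A_h(L)$, making surjectivity transparent.
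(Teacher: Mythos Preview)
Your proof is correct and follows essentially the same route as the paper: both define $\Phi$ as post-composition with $\alpha_h$, deduce injectivity from $[\alpha_h]_L$ being monic, and for surjectivity both rest on the identity $f=[\agot_h]_L\circ i_f\circ h^{-1}$. The only cosmetic difference is that you exhibit the preimage $\beta=[p_h]_L\circ i_f\circ h^{-1}$ explicitly, whereas the paper phrases surjectivity as $\mathrm{Im}(f)\leq [\alpha_h]_L$ via Lemma~\ref{lemimker} and then invokes the factorization through the image; your version is marginally more direct but not a genuinely different argument.
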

\begin{proof}
Certainly, by composing $\alpha_h:A_h\rightarrow 1_{\mathcal{A}}$ with the functor $\mathrm{Hom}_{\mathcal{A}}(M,-)$, we obtain a natural transformation:

\centerline{$\Phi:\mathrm{Hom}_{\mathcal{A}}(M, A_{h}(-))\longrightarrow \mathrm{Hom}_{\mathcal{A}}(M,-)$.}

\noindent such that for each $L\in\mathcal{A}$, $\Phi_L: g\mapsto [\alpha_{h}]_{L} \circ g$.\\

Recall that for this $L$ we have $[\alpha_h]_L=\text{Im}[\agot_h]_L$, and by definition of $\agot_h$ we have the commutative diagram:
\begin{equation}\label{agot}
\begin{aligned}
 \xymatrix{\mathbb{A}_{M}^{N}(L)\ar[rr]^{[\agot_{h}]_{L}} & & L\\
& N\ar[ul]^{i_{f}}\ar[ur]_{f\circ h}}      
\end{aligned}
\end{equation}

\noindent for all $f\in \mathrm{Hom}_{\mathcal{A}}(M,L)$. Since $[\agot_{h}]_{L}$ is a monomorphism, then $\Phi_L$ is a monomorphism. Now let $f\in \mathrm{Hom}_{\mathcal{A}}(M,L)$. By diagram (\ref{agot}), and since $h$ is an isomorphism, we have
$f=[\agot_{h}]_{L}\circ i_{f}\circ h^{-1}$. Then, by Lemma \ref{lemimker} we have that
$$\mathrm{Im}(f)= \mathrm{Im}([\agot_{h}]_{L}\circ i_{f}\circ h^{-1})\leq \mathrm{Im}([\agot_{h}]_{L})=[\alpha_{h}]_{L}$$

Therefore $\text{Im}(f)$, and thus $f$, factors through $[\alpha_{h}]_{L}$. We conclude that $\Phi_L$ is an isomorphism.
\end{proof} 

Next result will be useful to characterize idempotent preradicals.
\vspace{.2in}
\begin{proposition}\label{alphaidemhiso}
If $h:N\longrightarrow M$ is an isomorphism in $\mathcal{A}$ then $\alpha_{h}$ is idempotent.
\end{proposition}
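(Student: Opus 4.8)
The plan is to apply Lemma \ref{idempotente1} to the natural transformation $\agot_h:\mathbb{A}_{M}^{N}\to 1_{\mathcal{A}}$, taking $H=\mathbb{A}_{M}^{N}$ and $\sigma=\agot_h$. By Definition \ref{definicionalpha} we have $\alpha_h=\mathrm{Im}(\agot_h)$, so the hypothesis $\tau=\mathrm{Im}(\sigma)$ of that lemma is met with $\tau=\alpha_h$ and $T=A_h$. Hence it suffices to show that $\mathbb{A}_{M}^{N}\alpha_h:\mathbb{A}_{M}^{N}A_h\longrightarrow \mathbb{A}_{M}^{N}$ is an epimorphism in $\mathrm{Fun}(\mathcal{A},\mathcal{A})$.

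First I would compute the components of this natural transformation. Since $\mathbb{A}_{M}^{N}=N^{(-)}\circ \mathrm{Hom}_{\mathcal{A}}(M,-)$, applying the functor $\mathbb{A}_{M}^{N}$ to the component $[\alpha_h]_L:A_h(L)\to L$ amounts to first applying $\mathrm{Hom}_{\mathcal{A}}(M,-)$, which yields the post-composition map $\mathrm{Hom}_{\mathcal{A}}(M,A_h(L))\to \mathrm{Hom}_{\mathcal{A}}(M,L)$, $g\mapsto [\alpha_h]_L\circ g$, and then applying $N^{(-)}$, which produces the coproduct morphism that relabels the summands along this index-set map. But the post-composition map is precisely the component $\Phi_L$ of the natural transformation of Lemma \ref{isobonh}.

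Here the key input is Lemma \ref{isobonh}: because $h$ is an isomorphism, $\Phi_L$ is a \emph{bijection} of Hom-sets for every $L\in\mathcal{A}$. A bijection of index sets induces, by the universal property of the coproduct, an isomorphism between $N^{(\mathrm{Hom}_{\mathcal{A}}(M,A_h(L)))}$ and $N^{(\mathrm{Hom}_{\mathcal{A}}(M,L))}$. Therefore each component $[\mathbb{A}_{M}^{N}\alpha_h]_L$ is an isomorphism, so $\mathbb{A}_{M}^{N}\alpha_h$ is a natural isomorphism and, in particular, an epimorphism in $\mathrm{Fun}(\mathcal{A},\mathcal{A})$.

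Finally, Lemma \ref{idempotente1} yields that $\alpha_h$ is idempotent, completing the argument. The only delicate point is the identification of $\mathbb{A}_{M}^{N}\alpha_h$ with the coproduct relabeling induced by $\Phi_L$; once this is made explicit, the bijectivity supplied by Lemma \ref{isobonh} immediately gives the required epimorphism (indeed isomorphism). One may alternatively reduce to the case $h=1_M$ first by Lemma \ref{alfaiso}, but working directly with $h$ requires no extra effort.
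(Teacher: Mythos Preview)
Your argument is correct and follows essentially the same route as the paper: both apply Lemma \ref{idempotente1} with $\sigma=\agot_h$ and identify $\mathbb{A}_{M}^{N}\alpha_h$ with $N^{(-)}\Phi$, then invoke Lemma \ref{isobonh} to conclude this is a natural isomorphism and hence an epimorphism. Your componentwise description of $\mathbb{A}_{M}^{N}\alpha_h$ as the coproduct relabeling along the bijection $\Phi_L$ is just an explicit unpacking of the paper's one-line identification $N^{(-)}\Phi=\mathbb{A}_{M}^{N}\alpha_h$.
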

\begin{proof}
Let us consider the natural isomorphism $\Phi$ defined in Lemma \ref{isobonh}, and the composition of $\phi$ by the left side with the functor $N^{(-)}:\mathrm{Set}\longrightarrow \mathcal{A}$, which is thus a natural isomorphism $ N^{(-)}\Phi$. Notice that
$$N^{(-)}\circ \mathrm{Hom}_{\mathcal{A}}(M, -)=\mathbb{A}_{M}^{N}$$ and that
$$N^{(-)}\circ \mathrm{Hom}_{\mathcal{A}}(M, A_{h}(-))=N^{(-)}\circ \mathrm{Hom}_{\mathcal{A}}(M, -)\circ A_{h}=\mathbb{A}_{M}^{N}\circ A_{h} $$
Therefore: $$N^{(-)} \Phi=\mathbb{A}_{M}^{N}\alpha_{h}: \mathbb{A}_{M}^{N}\circ A_{h}\longrightarrow \mathbb{A}_{M}^{N}$$

\noindent with $\alpha_h=\text{Im}(\agot_h)$, so that the hypothesis of Lemma \ref{idempotente1} are satisfied and we conclude that $\alpha_{h}$ is idempotent.
\end{proof} 

\vspace{.2in}
Now, we shall give dual definitions to establish omega preradicals and their properties, and we shall study the relations between alpha and omega preradicals. We start considering, for any objects $N,M$ in $\mathcal{A}$, the functor which is the composition of contravariant functors $\mathbb{W}_{M}^{N}:=M^{-}\circ \mathrm{Hom}_{\mathcal{A}}(-,N):\mathcal{A}\longrightarrow \mathcal{A}$ where
$\mathrm{Hom}_{\mathcal{A}}(-,N):\mathcal{A}\longrightarrow \mathrm{Set}$ is the contravariant Hom functor
and the contravariant functor $M^{-}:\mathrm{Set}\longrightarrow \mathcal{A}$ is defined by the product and the universal property of the product in $\mathcal{A}$.\\

Now, for any morphism $k:N\longrightarrow M$ in $\mathcal{A}$, we define a natural transformation
\[\wneg_{k}\colon 1_{\mathcal{A}}\longrightarrow  \mathbb{W}_{M}^{N}\]

\noindent as follows: for any $L\in \mathcal{A}$ we set
$[\wneg_{k}]_{L}:L\longrightarrow M^{ \mathrm{Hom}_{\mathcal{B}}(L,N)}$ as the unique morphism such that for all $f\in \mathrm{Hom}_{\mathcal{A}}(L,N)$ the following diagram commutes:

\begin{equation}\label{wtransf}   
\begin{aligned}
\xymatrix{L \ar[rr]^{[\wneg_{k}]_{L}}\ar[dr]_{k\circ f} & &  M^{ \mathrm{Hom}_{\mathcal{A}}(L,N)}\ar[dl]^{\pi_{f}}\\
& M & }
\end{aligned}
\end{equation}

\noindent It is easy to verify that this defines a natural transformation.

\vspace{.2in}
\begin{definition}\label{definitionomega}
Let $k:N\longrightarrow M$ be a morphism in $\mathcal{A}$. We define the \textbf{omega preradical} associated to $k$ as $\xymatrix{\omega_k:=\mathrm{Ker}(\wneg_{k}): \Omega_{k}\ar@{^(->}[r] & 1_{\mathcal{A}}}$ in $\mathcal{A}$.
\end{definition}
\vspace{.2in}
The following result establishes the duality between alpha and omega preradicals. Recall the assignment $\Delta: Pr(\mathcal{A})\rightarrow Pr(\mathcal{A}^{op})$, defined prior to Proposition \ref{delta}.
\vspace{.2in}

\begin{proposition}\label{omegaalfa}
Let $k:N\longrightarrow M$ be any morphism in $\mathcal{A}$. Then:
\begin{enumerate}
    \item [(a)] $\Delta(\alpha_k)=\omega_{k^{op}}$
    \item [(b)] $\Delta(\omega_k)=\alpha_{k^{op}}$
\end{enumerate}

\end{proposition}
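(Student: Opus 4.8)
The plan is to prove both statements by unwinding the definitions of $\Delta$, $\alpha_k$, and $\omega_k$, and exploiting the fact that the functors $\mathbb{A}_M^N$ and $\mathbb{W}_M^N$ are dual to each other under passage to the opposite category. Recall that $\Delta(\tau) = (\tau^*)^{op}$, where $\tau^*=\mathrm{Coker}(\tau)$. For part (a), we have $\alpha_k = \mathrm{Im}(\agot_k)$, so by the SQ-sequence machinery $\alpha_k^* = \mathrm{Coker}(\alpha_k) = \mathrm{Coker}(\agot_k)$, the latter because $\agot_k$ and $\alpha_k$ have the same image and hence the same cokernel. Thus $\Delta(\alpha_k) = (\mathrm{Coker}(\agot_k))^{op}$, viewed as a subobject of $1_{\mathcal{A}^{op}}$.

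The key observation I would establish first is that the functor $\mathbb{A}_M^N$ on $\mathcal{A}$ corresponds, under the duality $\mathcal{A}\leftrightarrow\mathcal{A}^{op}$, to the functor $\mathbb{W}_{M}^{N}$ on $\mathcal{A}^{op}$; that is, $(\mathbb{A}_M^N)^{op}=\mathbb{W}_{M^{op}}^{N^{op}}$ as functors $\mathcal{A}^{op}\to\mathcal{A}^{op}$. This follows because a coproduct in $\mathcal{A}$ indexed by $\mathrm{Hom}_{\mathcal{A}}(M,L)$ becomes a product in $\mathcal{A}^{op}$ indexed by the same set, which in $\mathcal{A}^{op}$ is $\mathrm{Hom}_{\mathcal{A}^{op}}(L,M)$; comparing with the definition $\mathbb{W}_{M}^{N}=M^{-}\circ\mathrm{Hom}_{\mathcal{A}}(-,N)$, the indexing and the product/coproduct roles match precisely. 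Next I would check that under this identification the natural transformation $\agot_k:\mathbb{A}_M^N\to 1_{\mathcal{A}}$ dualizes to $\wneg_{k^{op}}:1_{\mathcal{A}^{op}}\to\mathbb{W}_{M^{op}}^{N^{op}}$, by comparing the defining universal-property diagrams \eqref{agotL} and \eqref{wtransf}: the triangle $i_f$, $f\circ h$ in \eqref{agotL} becomes, upon reversing arrows, exactly the triangle $\pi_f$, $k\circ f$ of \eqref{wtransf} with $k=h^{op}$.

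Granting these two identifications, the computation is immediate: $\Delta(\alpha_k)=(\mathrm{Coker}(\agot_k))^{op}=\mathrm{Ker}((\agot_k)^{op})=\mathrm{Ker}(\wneg_{k^{op}})=\omega_{k^{op}}$, where the second equality uses that cokernels in $\mathcal{A}$ become kernels in $\mathcal{A}^{op}$. This proves (a). For part (b), I would argue dually, noting that $\Delta$ has inverse assignment $\Delta'$ and that $\Delta$ is an anti-isomorphism exchanging subobjects and quotient objects; applying (a) to the opposite category (where the roles of $\alpha$ and $\omega$ are interchanged under the identification above) and then transporting back via $\Delta'$ yields $\Delta(\omega_k)=\alpha_{k^{op}}$. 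Alternatively, one can simply run the same diagram-chase in reverse, starting from $\omega_k=\mathrm{Ker}(\wneg_k)$ and using that $\omega_k^*=\mathrm{Coker}(\omega_k)$ dualizes to the image of $\agot_{k^{op}}$.

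The step I expect to be the main obstacle is the careful verification that $\agot_k$ dualizes to $\wneg_{k^{op}}$ rather than to some other natural transformation. The subtlety is bookkeeping: one must track how the unique morphism defined by the universal property of the coproduct in $\mathcal{A}$ becomes the unique morphism defined by the universal property of the product in $\mathcal{A}^{op}$, and confirm that the composite $f\circ h$ appearing in \eqref{agotL} reverses exactly to the composite $k^{op}\circ f^{op}$ appearing in \eqref{wtransf}. Once the functorial identification $(\mathbb{A}_M^N)^{op}=\mathbb{W}_{M^{op}}^{N^{op}}$ and the transformation-level identification $(\agot_k)^{op}=\wneg_{k^{op}}$ are secured, the passage from image to cokernel and from kernel to the dual statement is purely formal, relying only on the already-established behavior of $\mathrm{Im}$, $\mathrm{Ker}$, and $\mathrm{Coker}$ under the duality assignment $\Delta$.
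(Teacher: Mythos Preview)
Your approach is essentially identical to the paper's: both verify that the functor $\mathbb{A}_M^N$ on $\mathcal{A}$ dualizes to the appropriate $\mathbb{W}$-functor on $\mathcal{A}^{op}$, check that $(\agot_k)^{op}=\wneg_{k^{op}}$ by comparing the defining universal-property triangles, and conclude via $\Delta(\alpha_k)=(\mathrm{Coker}(\agot_k))^{op}=\mathrm{Ker}(\wneg_{k^{op}})=\omega_{k^{op}}$; part (b) is likewise derived from (a) by applying the inverse assignment $\Delta'$ in the opposite category. One bookkeeping slip to fix: the correct identification is $(\mathbb{A}_M^N)^{op}=\mathbb{W}_N^M$ (as functors on $\mathcal{A}^{op}$), not $\mathbb{W}_{M^{op}}^{N^{op}}$---the subscript of $\mathbb{W}$ names the object being multiplied and the superscript names the target of the Hom indexing, so your own computation (product of copies of $N$ indexed by $\mathrm{Hom}_{\mathcal{A}^{op}}(L,M)$) forces the indices to swap.
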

\begin{proof}
To prove the equality in $(a)$, let $k:N\longrightarrow M$ be a morphism in $\mathcal{A}$ and consider the morphism $k^{op}:M\longrightarrow N$ in $\mathcal{A}^{op}$. Notice that the functor $(\mathbb{A}_{N}^{M})^{op}:\mathcal{A}^{op}\rightarrow \mathcal{A}^{op}$ maps any object $L$ to the product $N^{\mathrm{Hom}_{\mathcal{A}^{op}}(L,M)}$ in $\mathcal{A}^{op}$ and for each morphism $\lambda':L'\rightarrow L$ in $\mathcal{A}^{op}$, this functor maps $\lambda'$ to the morphism that completes the following diagram, which is obtained by reversing diagram (\ref{defANM}):

\centerline{\xymatrix{N^{\mathrm{Hom}_{\mathcal{A}}(L,M)}\ar[dr]^{p_{f'}} & & N^{\mathrm{Hom}_{\mathcal{A}}(L',M)}\ar[ll]_{(\mathbb{A}_{M}^{N})^{op}(\lambda')}\ar[dl]_{p'_{f'\circ \lambda'}}\\
& N}}

This is precisely the definition of the functor $\mathbb{W}_{N}^{M}:\mathcal{A}^{op}\rightarrow \mathcal{A}^{op}$. Thus, $(\mathbb{A}_{N}^{M})^{op}=\mathbb{W}_{N}^{M}$. Furthermore, considering diagram (\ref{wtransf}) the natural transformation $\wneg_{k^{op}}$ for each $L'\in\mathcal{A}^{op}$ is given by the following diagram in $\mathcal{A}^{op}$:

\centerline{\xymatrix{L' \ar[rr]^{[\wneg_{k^{op}}]_{L'}}\ar[dr]_{k^{op}\circ f'} & &  N^{ \mathrm{Hom}_{\mathcal{A}^{op}}(L',M)}\ar[dl]^{\pi_{f'}}\\
& N & }}

Thus the opposite of this natural transformation $\wneg_{k^{op}} : 1_{\mathcal{A}^{op}}\longrightarrow (\mathbb{A}_{N}^{M})^{op}$ in $\mathrm{Fun}(\mathcal{A},\mathcal{A})$ is $(\wneg_{k^{op}})^{op} : \mathbb{A}_{N}^{M}\longrightarrow 1_{\mathcal{A} }$, whose definition coincides with the natural transformation $\agot_{k}:\mathbb{A}_{N}^{M}\longrightarrow 1_{\mathcal{A}}$, i.e., $\agot_{k}=(\wneg_{k^{op}})^{op}$. Therefore:

$$\Delta(\alpha_k)=(\alpha_k^*)^{op}=(\text{Coker}(\agot_{k}))^{op}=\text{Ker}(\wneg_{k^{op}})=\omega_{k^{op}}$$

\vspace{.1in}
To prove equality in $(b)$ consider the inverse assignment $\Delta': Pr(\mathcal{A}^{op})\rightarrow Pr(\mathcal{A})$. By $(a)$ we have $\Delta'(\alpha_{k^{op}})=\omega_{k}$, from which $(b)$ follows.

\end{proof}

As an immediate consequence, we can prove the dual of Proposition \ref{alphaidemhiso}.
\vspace{.2in}

\begin{proposition}\label{omegaradical}
If $k:N\longrightarrow M$ is an isomorphism in $\mathcal{A}$ then $\omega_{k}$ is a radical.
\end{proposition}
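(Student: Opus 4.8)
The plan is to obtain this statement as the formal dual of Proposition \ref{alphaidemhiso}, using the duality assignment $\Delta$ rather than repeating the argument for omega preradicals. All the necessary machinery is already available: Proposition \ref{omegaalfa} identifies $\omega_k$ with an alpha preradical in the opposite category, while Corollary \ref{radidem} translates idempotency across $\Delta$ into the radical condition. So the whole proof reduces to chaining these three facts together.

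First I would observe that bicompleteness is a self-dual property, so $\mathcal{A}^{op}$ is again bicomplete, and that if $k:N\longrightarrow M$ is an isomorphism in $\mathcal{A}$, then $k^{op}:M\longrightarrow N$ is an isomorphism in $\mathcal{A}^{op}$. Hence Proposition \ref{alphaidemhiso}, applied inside $\mathcal{A}^{op}$ to the isomorphism $k^{op}$, yields that the alpha preradical $\alpha_{k^{op}}\in Pr(\mathcal{A}^{op})$ is idempotent.

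Next, by Proposition \ref{omegaalfa}(b) we have $\Delta(\omega_k)=\alpha_{k^{op}}$, so $\Delta(\omega_k)$ is idempotent in $Pr(\mathcal{A}^{op})$. Finally, Corollary \ref{radidem}(a) asserts that a preradical $\tau$ is a radical if, and only if, $\Delta(\tau)$ is idempotent in $Pr(\mathcal{A}^{op})$. Taking $\tau=\omega_k$ then gives precisely that $\omega_k$ is a radical, which completes the proof.

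The argument is essentially immediate once the duality dictionary is in place, so there is no substantive computation. The only point deserving care — and the nearest thing to an obstacle — is confirming that the hypotheses of Proposition \ref{alphaidemhiso} genuinely transfer to $\mathcal{A}^{op}$: that bicompleteness is self-dual, and that the alpha construction performed in $\mathcal{A}^{op}$ is exactly what $\Delta$ produces from $\omega_k$, which is the content of Proposition \ref{omegaalfa}. As an alternative one could dualize the proof of Proposition \ref{alphaidemhiso} step by step, establishing omega-analogues of the idempotency criterion in Lemma \ref{idempotente1} and of the Hom-isomorphism in Lemma \ref{isobonh}; but passing through $\Delta$ makes this repetition unnecessary.
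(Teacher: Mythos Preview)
Your proof is correct and follows essentially the same route as the paper: pass to $\mathcal{A}^{op}$, use Proposition \ref{alphaidemhiso} to get that $\alpha_{k^{op}}$ is idempotent, identify this with $\Delta(\omega_k)$ via Proposition \ref{omegaalfa}, and conclude via Corollary \ref{radidem}. The only cosmetic difference is that the paper phrases the identification as $\omega_k=\Delta_{\mathcal{A}^{op}}(\alpha_{k^{op}})$ rather than $\Delta(\omega_k)=\alpha_{k^{op}}$, which is the same statement read through the inverse assignment.
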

\begin{proof}
If  $k:N\longrightarrow M$ is an isomorphism in $\mathcal{A}$ then  $k^{op}:M\longrightarrow N$ is an isomorphism in $\mathcal{A}^{op}$. By Proposition \ref{alphaidemhiso}, $\alpha_{k^{op}}$ is an idempotent in $ Pr(\mathcal{A}^{op})$. By Proposition \ref{omegaalfa}, $\omega_k=\Delta_{\mathcal{A}^{op}}(\alpha_{k^{op}})$, and by Proposition \ref{radidem}, it is a radical in $ Pr(\mathcal{A})$.

\end{proof}

\vspace{.1in}
Now we are ready to prove that omega preradicals are mapped to omega preradicals under the morphism $\psi$ of the Galois connection.

\begin{proposition}\label{psiomega}
Let $(F,G):\mathcal{A}\longrightarrow \mathcal{B}$ be an adjoint pair between bicomplete abelian categories and
let $(\varphi,\psi):Pr(\mathcal{A})\longrightarrow Pr(\mathcal{B})$ be the induced Galois connection. If $k:N\longrightarrow M$ is any morphism in $\mathcal{B}$ then $\psi(\omega_{k})=\omega_{G(k)}$.
\end{proposition}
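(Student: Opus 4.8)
The plan is to avoid computing directly with the kernel defining $\psi$ and instead reduce the claim to the already-established statement $\varphi(\alpha_h)=\alpha_{F(h)}$ (Proposition \ref{varphialpha=alpha}) by passing to the opposite categories through the duality assignment $\Delta$. Two facts make this reduction natural: omega preradicals are precisely the $\Delta$-duals of alpha preradicals (Proposition \ref{omegaalfa}), and under $\Delta$ the adjoint part $\psi$ corresponds to the coadjoint part $\overline{\varphi}$ of the Galois connection induced by the dual pair $(G^{op},F^{op})\colon\mathcal{B}^{op}\to\mathcal{A}^{op}$ (Proposition \ref{Galoiscoprerad}). So rather than prove the identity in $\mathcal{A}$ directly, I would apply $\Delta_{\mathcal{A}}$ to both sides and check that the images agree.

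Concretely, I would compute $\Delta_{\mathcal{A}}(\psi(\omega_k))$ step by step. First, Proposition \ref{Galoiscoprerad}(b) gives $\Delta_{\mathcal{A}}(\psi(\omega_k))=\overline{\varphi}(\Delta_{\mathcal{B}}(\omega_k))$, where $\overline{\varphi}$ is the coadjoint part of the connection induced by $(G^{op},F^{op})$. Since $k\colon N\to M$ lives in $\mathcal{B}$, Proposition \ref{omegaalfa}(b) yields $\Delta_{\mathcal{B}}(\omega_k)=\alpha_{k^{op}}$, an alpha preradical in $\mathcal{B}^{op}$. Now I would invoke Proposition \ref{varphialpha=alpha}, but for the adjoint pair $(G^{op},F^{op})$ whose left adjoint is $G^{op}$; as $\mathcal{A}^{op}$ and $\mathcal{B}^{op}$ are again bicomplete, the proposition applies and gives $\overline{\varphi}(\alpha_{k^{op}})=\alpha_{G^{op}(k^{op})}=\alpha_{G(k)^{op}}$, using $G^{op}(k^{op})=G(k)^{op}$. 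On the other hand, applying Proposition \ref{omegaalfa}(b) inside $\mathcal{A}$ to the morphism $G(k)$ gives $\Delta_{\mathcal{A}}(\omega_{G(k)})=\alpha_{G(k)^{op}}$. Comparing, $\Delta_{\mathcal{A}}(\psi(\omega_k))=\alpha_{G(k)^{op}}=\Delta_{\mathcal{A}}(\omega_{G(k)})$, and since $\Delta_{\mathcal{A}}$ is an order anti-isomorphism, hence injective, I conclude $\psi(\omega_k)=\omega_{G(k)}$.

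The main obstacle is not any deep computation but the bookkeeping in the passage to opposite categories: one must correctly identify the left adjoint of the dual pair as $G^{op}$ rather than $F^{op}$, confirm the identity $G^{op}(k^{op})=G(k)^{op}$ on morphisms, and verify that bicompleteness is inherited by $\mathcal{A}^{op}$ and $\mathcal{B}^{op}$ (products and coproducts are exchanged) so that Proposition \ref{varphialpha=alpha} is legitimately available there. Getting the variance straight and keeping the roles of $\varphi$ versus $\overline{\varphi}$ and of $\psi$ versus $\overline{\varphi}$ distinct is where an error would most easily creep in; once those identifications are pinned down, the argument is a short chain of equalities with no genuine difficulty.
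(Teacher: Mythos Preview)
Your proposal is correct and follows essentially the same route as the paper: apply $\Delta_{\mathcal{A}}$, use Proposition~\ref{Galoiscoprerad}(b) to pass to $\overline{\varphi}$, turn $\omega_k$ into $\alpha_{k^{op}}$ via Proposition~\ref{omegaalfa}, apply Proposition~\ref{varphialpha=alpha} for the dual pair $(G^{op},F^{op})$, and then undo $\Delta$. The only cosmetic difference is that the paper applies the inverse of $\Delta_{\mathcal{A}}$ at the end, whereas you compute $\Delta_{\mathcal{A}}(\omega_{G(k)})$ separately and invoke injectivity; the content is the same.
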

\begin{proof}
For the adjoint pair $(F,G)$, let $(G^{op},F^{op}):\mathcal{B}^{op}\longrightarrow \mathcal{A}^{op}$ be the corresponding adjoint pair between the opposite categories, and let 
$(\overline{\varphi},\overline{\psi}):Pr(\mathcal{B}^{op})\longrightarrow Pr(\mathcal{A}^{op}) $ be the Galois connection induced by  $(G^{op},F^{op})$. Let $k:N\longrightarrow M$ be a morphism in $\mathcal{B}$. By Proposition \ref{Galoiscoprerad}, Proposition \ref{omegaalfa} and Proposition \ref{varphialpha=alpha}, respectively, we have:
$$\Delta_{\mathcal{A}}(\psi(\omega_k))=\overline{\varphi}(\Delta_{\mathcal{B}}(\omega_k))=\overline{\varphi}(\alpha_{k^{op}})=\alpha_{G^{op}(k^{op})} $$

Therefore, again by Proposition \ref{omegaalfa}:
$$\psi(\omega_k)=\Delta_{\mathcal B}(\alpha_{G^{op}(k^{op})})=\omega_{G(k)}$$
\end{proof}

\section{$Pr(\mathcal{A})$ as a complete lattice}
%PONER NOCIONE BASICAS DE REICULAS AQUI O EN LOS PRELIMINARES.\\

We know that if $\mathcal{A}$ is an abelian category then $Pr(\mathcal{A})$ is a partially ordered collection. Now let us assume, from now on, that in addition to $\mathcal{A}$ being bicomplete, we assume that it is locally small. Our first aim in this section is to prove that with these conditions $Pr(\mathcal{A})$ is a complete lattice, in the sense that each collection $\mathcal{C}$ (not necessarily a set) of preradicals in $Pr(\mathcal{A})$ has a least upper bound, or \textit{join}, denoted by $\bigvee \mathcal{C}$. And thus, each collection of preradicals has also a greatest lower bound, or \textit{meet}, denoted by $\bigwedge\mathcal{C}$ . We will describe each one, and then we will use them to prove some results involving, on one hand, alpha preradicals and idempotent preradicals, and on the other hand, omega preradicals and radicals.
\vspace{.2in}

 So let us consider any collection of preradicals  $\left\{\xymatrix{\sigma_{i}: S_{i}\ar@{^(->}[r] & 1_{\mathcal{A}}}\right\}_{i\in I}$, where the index $I$ is a class (not necessarily a set). Thus, for each $A\in \mathcal{A}$ we have the collection $\left\{\xymatrix{S_{i}(A)\ar@{^(->}[r]^(.6){(\sigma_{i})_{A}} & A}\right\}_{i\in I}$ of subobjects of $A$. Since $\mathcal{A}$ is locally small, this collection is a set, although the index $I$ is not necessarily a set, so we can describe the same collection with an index $I_A\subseteq I$ which is indeed a set. In fact, $I_A$ can be such that $\left\{\xymatrix{S_{i}(A)\ar@{^(->}[r]^(.6){(\sigma_{i})_{A}} & A}\right\}_{i\in I_A}$ is a complete irredundant set of representatives of $\left\{\xymatrix{S_{i}(A)\ar@{^(->}[r]^(.6){(\sigma_{i})_{A}} & A}\right\}_{i\in I}$. In other words, for each $j\in I$ there exists a unique $i\in I_A$ such that $\xymatrix{S_{j}(A)\ar@{^(->}[r]^(.6){(\sigma_{j})_{A}} & A}$ and $\xymatrix{S_{i}(A)\ar@{^(->}[r]^(.6){(\sigma_{i})_{A}} & A}$ represent the same subobject. Since $\mathcal{A}$ is cocomplete, for each $A\in \mathcal{A}$ the coproduct $\bigoplus_{i\in {I_A}}S_{i}(A)$ exists. For each $j\in I$ and each $A\in\mathcal{A}$, let $\iota_j^A:S_j(A)\longrightarrow \bigoplus_{i\in {I_A}}S_{i}(A)$ be the natural inclusion in the coproduct, possibly composed with an isomorphism. Thus we have an endofunctor $S$ on $\mathcal{A}$, which we may denote by $S=\bigoplus_{i\in {I}}S_{i}$, such that $A\mapsto \bigoplus_{i\in {I_A}}S_{i}(A)$ and for any morphism  $g:A\to A'$, there is associated $S(g)$ which, by the universal property of the coproduct, is the unique morphism such that for each $j\in I$ the following diagram commutes:
\begin{equation}\label{funtorS}   
\begin{aligned}
\xymatrix{
S_{j}(A)\ar[d]_{S_{j}(g)}\ar@{^(->}[r]^{\iota_{j}^{A}\ } & \bigoplus_{i\in I_A}S_{i}(A)\ar[d]^{S(g)} \\
S_{j}(A')\ar@{^(->}[r]_{\iota_{j}^{A'}\ } &\bigoplus_{i\in I_{A'}}S_{i}(A') } 
\end{aligned}
\end{equation}

\noindent It is clear that $S$ is a functor. Notice that diagram (\ref{funtorS}) also proves that for each $j\in I$,  $\xymatrix{\iota_j:S_j\ar@{^(->}[r] & S}$ is a natural transformation. Now we will define a natural transformation which is the first step to define the meet of a collection of preradicals.
\vspace{.2in}
\begin{proposition}\label{conspresup}
Let ${\mathcal C} = \left\{\xymatrix{\sigma_{i}: S_{i}\ar@{^(->}[r] & 1_{\mathcal{A}}}\right\}_{i\in I}$ be a collection of preradicals and let $S$ be the functor defined above. For each $A\in \mathcal{A}$, let $I_A$ be the set defined above, and let $\sigma^{\mathcal C}_A$ be the unique morphism  which, by the universal property of the coproduct, completes for each $j\in I$ the following commutative diagram:

\begin{equation}\label{sigmaC}
\begin{aligned}
\xymatrix{\bigoplus_{i\in I_A}S_{i}(A)\ar[rr]^{\sigma^{\mathcal C}_{A}} & & A\\
& S_{j}(A)\ar[lu]^{\iota_{j}^{A}}\ar[ur]_{(\sigma_{j})_{A}} &}
\end{aligned}
\end{equation}

\noindent Then $\sigma^{\mathcal C}: S\longrightarrow 1_{\mathcal{A}}$ is a natural transformation.
\end{proposition}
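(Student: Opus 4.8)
The plan is to verify the naturality square directly, exploiting the universal property of the coproduct $S(A)=\bigoplus_{i\in I_A}S_{i}(A)$ to reduce the equality of two morphisms out of a coproduct to checking agreement on each summand. Concretely, I fix a morphism $g:A\longrightarrow A'$ in $\mathcal{A}$ and aim to show
$$\sigma^{\mathcal C}_{A'}\circ S(g)=g\circ \sigma^{\mathcal C}_{A}$$
as morphisms $\bigoplus_{i\in I_A}S_{i}(A)\longrightarrow A'$. Since $I_A\subseteq I$ indexes a complete irredundant set of representatives, the family $\{\iota_j^{A}\}_{j\in I}$ exhausts (up to the isomorphisms built into its definition) the canonical coproduct injections, so by the universal property it suffices to prove that both composites agree after precomposition with $\iota_j^{A}$ for every $j\in I$.

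First I would compute the left-hand composite. Applying the defining property of $S(g)$ from diagram (\ref{funtorS}), namely $S(g)\circ\iota_j^{A}=\iota_j^{A'}\circ S_j(g)$, and then the defining property of $\sigma^{\mathcal C}_{A'}$ from diagram (\ref{sigmaC}), namely $\sigma^{\mathcal C}_{A'}\circ\iota_j^{A'}=(\sigma_j)_{A'}$, yields
$$\sigma^{\mathcal C}_{A'}\circ S(g)\circ\iota_j^{A}=(\sigma_j)_{A'}\circ S_j(g).$$
For the right-hand composite, a single application of diagram (\ref{sigmaC}) gives
$$g\circ\sigma^{\mathcal C}_{A}\circ\iota_j^{A}=g\circ(\sigma_j)_{A}.$$

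The key step that makes the two expressions coincide is the naturality of each $\sigma_j$ itself: since $\xymatrix{\sigma_j: S_j\ar@{^(->}[r] & 1_{\mathcal{A}}}$ is a preradical, hence a natural transformation, its naturality square for $g$ reads $(\sigma_j)_{A'}\circ S_j(g)=g\circ(\sigma_j)_{A}$. Thus the two composites agree after precomposition with every $\iota_j^{A}$, and uniqueness in the universal property of the coproduct forces $\sigma^{\mathcal C}_{A'}\circ S(g)=g\circ\sigma^{\mathcal C}_{A}$, which is exactly naturality of $\sigma^{\mathcal C}$.

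I do not expect a genuine obstacle here; the only point requiring care is the bookkeeping between the two indexings $I$ and $I_A$, together with the fact that $\iota_j^{A}$ may carry a built-in isomorphism when $j\notin I_A$. This is harmless, since each $\iota_j^{A}$ factors through the genuine coproduct injection of its representative in $I_A$, so testing against all $j\in I$ is equivalent to testing against the canonical injections indexed by $I_A$, and the argument above goes through verbatim.
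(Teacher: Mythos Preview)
Your proof is correct and follows essentially the same approach as the paper: both reduce the naturality square to checking agreement after precomposition with each $\iota_j^{A}$, invoke diagram (\ref{funtorS}) for $S(g)$, diagram (\ref{sigmaC}) for $\sigma^{\mathcal C}$, and the naturality of each $\sigma_j$, and then conclude by uniqueness in the universal property of the coproduct. The only difference is presentational---the paper packages these steps into a single large diagram while you unroll them as explicit equations---and your remark on the $I$ versus $I_A$ bookkeeping is a welcome clarification.
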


\begin{proof}
Let $g:A\longrightarrow A'$ be a morphism in $\mathcal{A}$. For each $j\in I$ we have the diagram:

\begin{equation}\label{diagcono}   
\begin{aligned}
\xymatrix{ & \bigoplus_{i\in I_A}S_{i}(A)\ar[d]^{\sigma^{\mathcal C}_{A}}\ar@/^3pc/[ddd]^{S(g)}\\
S_{j}(A)\ar@{^(->}[r]^{(\sigma_{j})_{A}}\ar[d]_{H_{j}(g)}\ar@{^(->}[ur]^{\iota_{j}^{A}} & A\ar[d]^{g} \\
S_{j}(A')\ \ar@{^(->}[r]_{(\sigma_{j})_{A'}}\ar@{^(->}[dr]_{\iota_{j}^{A'}} & A' \\
 &  \bigoplus_{i\in I_{A'}}S_{i}(A)\ar[u]_{\sigma^{\mathcal C}_{A'}}}
 \end{aligned}
 \end{equation}

The external square is just commutative diagram (\ref{funtorS}). Both triangles are just the definition of $\sigma^{\mathcal C}$ and the left square commutes by the naturality of $\sigma_j$. We conclude that both morphisms $g\circ\sigma^{\mathcal C}_A$ and $\sigma^{\mathcal C}_{A'}\circ S(g)$ complete for all $j\in I$ the commutative triangle:

\centerline{$\xymatrix{\bigoplus_{i\in I_A}S_{i}(A)\ar@{-->}[rr] & & A'\\
& S_{j}(A)\ar[lu]^{\iota_{j}^{A}}\ar[ur]_{(\sigma_{j})_{A'}\circ S_j(g)} &}$}

Therefore, by uniqueness of the universal property of the coproduct, the right square of diagram (\ref{diagcono}) commutes, which means that $\sigma^{\mathcal C}$ is a natural transformation.
\end{proof}

\begin{proposition}\label{existesupremos}
Let $\mathcal{C}=\left\{\xymatrix{\sigma_{i}: S_{i}\ar@{^(->}[r] & 1_{\mathcal{A}}}\right\}_{i\in I}$ be a collection of preradicals. Then the preradical $\mathrm{Im} (\sigma^{\mathcal C})$ is the join of $\mathcal{C}$. In other words, $\mathrm{Im} (\sigma^{\mathcal C})=\bigvee_{i\in I}\sigma_{i}$.
\end{proposition}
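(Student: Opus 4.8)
The plan is to verify the two defining properties of a least upper bound: that $\tau^{\mathcal C}:=\mathrm{Im}(\sigma^{\mathcal C})$ is an upper bound of $\mathcal C$ in $Pr(\mathcal A)$, and that it lies below every upper bound. First I would fix the epi--mono factorization in $\mathrm{Fun}(\mathcal A,\mathcal A)$, writing $\sigma^{\mathcal C}=\tau^{\mathcal C}\circ p$ with $p:S\twoheadrightarrow T^{\mathcal C}$ an epimorphism and $\tau^{\mathcal C}:T^{\mathcal C}\hookrightarrow 1_{\mathcal A}$ the monomorphism representing $\mathrm{Im}(\sigma^{\mathcal C})$. For the upper bound property, recall from diagram \eqref{sigmaC} that $\sigma^{\mathcal C}\circ\iota_j=\sigma_j$ for each $j\in I$, where $\iota_j:S_j\hookrightarrow S$ is the natural transformation noted after diagram \eqref{funtorS}. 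Since $\sigma_j$ is itself a monomorphism, $\mathrm{Im}(\sigma_j)=\sigma_j$, so Lemma \ref{lemimker}(a) gives $\sigma_j=\mathrm{Im}(\sigma^{\mathcal C}\circ\iota_j)\le\mathrm{Im}(\sigma^{\mathcal C})=\tau^{\mathcal C}$; equivalently, $p\circ\iota_j:S_j\to T^{\mathcal C}$ is the natural transformation witnessing $\sigma_j\le\tau^{\mathcal C}$. Thus $\tau^{\mathcal C}$ is an upper bound of $\mathcal C$.

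Next I would establish minimality. Let $\tau:T\hookrightarrow 1_{\mathcal A}$ be any upper bound, so that for each $i\in I$ there is a natural transformation $g_i:S_i\to T$ with $\tau\circ g_i=\sigma_i$. The idea is to assemble the $g_i$ into a single natural transformation $h:S\to T$ and then factor $\sigma^{\mathcal C}$ through the monomorphism $\tau$. For each $A\in\mathcal A$, I would define $h_A:\bigoplus_{i\in I_A}S_i(A)\to T(A)$ as the unique morphism whose composite with the inclusion of the $i$-th summand is $(g_i)_A$, for each $i\in I_A$; this exists by the universal property of the coproduct. Once $h=\{h_A\}$ is shown to be natural, the equality $\tau\circ h=\sigma^{\mathcal C}$ reduces to the summands: $\tau\circ h\circ\iota_j=\tau\circ g_j=\sigma_j=\sigma^{\mathcal C}\circ\iota_j$ for all $j$, whence $\tau\circ h=\sigma^{\mathcal C}$ by uniqueness in the universal property. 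Then Lemma \ref{lemimker}(a) yields $\tau^{\mathcal C}=\mathrm{Im}(\sigma^{\mathcal C})=\mathrm{Im}(\tau\circ h)\le\mathrm{Im}(\tau)=\tau$, which is exactly $\tau^{\mathcal C}\le\tau$. Combining with the upper bound property, $\tau^{\mathcal C}=\bigvee_{i\in I}\sigma_i$.

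The one genuinely technical point, which I expect to be the main obstacle, is the naturality of $h$, i.e.\ that $T(g)\circ h_A=h_{A'}\circ S(g)$ for every $g:A\to A'$. I would prove this exactly as in Proposition \ref{conspresup}: by the uniqueness clause in the universal property of the coproduct $\bigoplus_{i\in I_A}S_i(A)$, it suffices to check that both composites agree after precomposition with each inclusion $\iota_j^A$. Using the defining square \eqref{funtorS} for $S(g)$ on one side and the naturality of each $g_j$ on the other, both composites equal $(g_j)_{A'}\circ S_j(g)$, so they coincide. Some care is needed with the bookkeeping of the representative index sets $I_A$ versus the full class $I$: the inclusions $\iota_j^A$ are defined for every $j\in I$ (possibly composed with an isomorphism), and because $\tau$ is a monomorphism one checks that $h_A\circ\iota_j^A=(g_j)_A$ holds for all $j\in I$, not merely for $j\in I_A$. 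Hence the verification runs over all $j\in I$ and is insensitive to the particular choice of representatives, just as in the construction of $\sigma^{\mathcal C}$ itself.
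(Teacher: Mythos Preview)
Your proposal is correct and follows essentially the same approach as the paper's proof: both arguments establish the upper bound property from the factorization $\sigma^{\mathcal C}\circ\iota_j=\sigma_j$, and both establish minimality by assembling the witnessing maps $g_j$ (the paper calls them $\gamma_j$) into a single natural transformation $S\to T$ via the universal property of the coproduct, then invoking Lemma~\ref{lemimker}(a). Your treatment is in fact more thorough than the paper's, which hand-waves the naturality of the assembled map by saying it is defined ``in a similar way to Proposition~\ref{conspresup}'' and does not address the $I_A$ versus $I$ bookkeeping at all.
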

\begin{proof}
For such a collection $\mathcal C$ of preradicals, by diagram (\ref{sigmaC}), and considering that $\sigma^{\mathcal C}$ is a natural transformation, as well as $\sigma_j$ and $\iota_j$, for each $j\in I$, we have that $\sigma_j$ factorizes through $\sigma^{\mathcal C}$, and thus, it factorizes through $\mathrm{Im} (\sigma^{\mathcal C})$. Therefore, $\sigma_j\leq \mathrm{Im} (\sigma^{\mathcal C})$ for each $j\in I$.

Now, let $\xymatrix{\tau: K\ar@{^(->}[r] & 1_{\mathcal{A}}}$ be a preradical such that $\sigma_{j}\leq \tau$ for all $j\in I$. Then for each $j\in I$ there exists a natural transformation $\gamma_{j}:S_{j}\longrightarrow K$ such that $\tau\circ\gamma_{j}=\sigma_{i}$.
On the other hand, we can define in a similar way to Proposition \ref{conspresup}, a natural transformation $\gamma$ such that for each $A\in\mathcal{A}$, $\gamma_A$ is the unique morphism which completes for each $j\in I$ the commutative diagram:

\begin{equation}\label{gamma}
\begin{aligned}
\xymatrix{\bigoplus_{i\in I_A}S_{i}(A)\ar[rr]^{\gamma_{A}} & & K(A)\\
& S_{j}(A)\ar[lu]^{\iota_{j}^{A}}\ar[ur]_{(\gamma_{j})_{A}} &}
\end{aligned}
\end{equation}

Therefore both morphisms $\sigma^{\mathcal C}_A$ and $\tau_A\circ\gamma_A$ complete for all $j\in I$ the commutative triangle:

\centerline{$\xymatrix{\bigoplus_{i\in I_A}S_{i}(A)\ar@{-->}[rr] & & A\\
& S_{j}(A)\ar[lu]^{\iota_{j}^{A}}\ar[ur]_{(\sigma_{j})_{A}} &}$}

And by uniqueness in the universal property of the coproduct, we conclude that $\sigma^{\mathcal C}_A=\tau_{A}\circ \gamma_{A}$. Since this happens for every $A\in\mathcal{A}$ then $\sigma^{\mathcal C}=\tau\circ \gamma$, Therefore, by Lemma \ref{lemimker}, we have $\mathrm{Im} (\sigma^{\mathcal C})\leq \mathrm{Im}(\tau)=\tau$. We conclude that $\mathrm{Im} (\sigma^{\mathcal C})=\bigvee_{i\in I}\sigma_{i}$.

\end{proof} 

It is known (see for example \cite{Stentrom}, Chapter III, Proposition 1.2) that if $L$ is a partially ordered set and every subset has a least upper bound then $L$ is a complete lattice. This fact can be extended to a partially ordered collection, such as $Pr(\mathcal{A})$. Thus by Proposition \ref{existesupremos}, we have the following result.
\vspace{.2in}

\begin{corollary}\label{prcomplelatice}
Let $\mathcal{A}$ be an
abelian category which is bicomplete and locally small. Then $Pr(\mathcal{A})$ is a complete lattice.
\end{corollary}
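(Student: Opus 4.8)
The plan is to obtain this as a direct consequence of Proposition \ref{existesupremos} together with the order-theoretic principle already cited from \cite{Stentrom}: a partially ordered structure in which every collection admits a least upper bound is automatically a complete lattice. First I would recall that, as established at the start of this section, $Pr(\mathcal{A})$ is a partially ordered collection (not necessarily a set). Proposition \ref{existesupremos} then supplies, for an arbitrary collection $\mathcal{C}=\{\sigma_i\}_{i\in I}$ with $I$ a class, the join $\bigvee\mathcal{C}=\mathrm{Im}(\sigma^{\mathcal C})$. Thus all joins exist, and it remains only to produce all meets.

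For the meets, I would use the classical device of defining the meet of $\mathcal{C}$ as the join of its lower bounds. Concretely, given a collection $\mathcal{C}$, let $\mathcal{L}$ be the collection of all preradicals $\tau$ with $\tau\leq\sigma_i$ for every $i\in I$, and set $\bigwedge\mathcal{C}:=\bigvee\mathcal{L}$, which exists by Proposition \ref{existesupremos}. One checks that $\bigvee\mathcal{L}$ is a lower bound of $\mathcal{C}$: for each $i$, every element of $\mathcal{L}$ lies below $\sigma_i$, so $\sigma_i$ is an upper bound of $\mathcal{L}$ and hence $\bigvee\mathcal{L}\leq\sigma_i$ by the defining property of the join. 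It is then the greatest lower bound, since any lower bound of $\mathcal{C}$ belongs to $\mathcal{L}$ and therefore lies below $\bigvee\mathcal{L}$. The least and greatest elements are recovered as the empty join (the zero preradical) and the join of the whole collection $Pr(\mathcal{A})$, namely $1_{\mathcal{A}}$ itself, so $Pr(\mathcal{A})$ carries the full structure of a complete lattice.

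The only point requiring care, and the main obstacle, is the set-versus-class distinction, since $Pr(\mathcal{A})$ and the index $I$ need not be sets. This is exactly where local smallness enters: for each fixed $A\in\mathcal{A}$ the subobjects of $A$ form a set, so the reindexing by $I_A\subseteq I$ in the construction preceding Proposition \ref{conspresup} makes the object-wise coproduct $\bigoplus_{i\in I_A}S_i(A)$ legitimate even when $I$ is a proper class, while cocompleteness guarantees its existence. Consequently the join construction of Proposition \ref{existesupremos} remains valid for class-indexed collections, and the meet-from-joins argument above applies verbatim to the partially ordered collection $Pr(\mathcal{A})$ rather than merely to a poset. I would therefore stress in the write-up that the extension of the cited lattice-completeness criterion from posets to partially ordered collections (conglomerates) is harmless precisely because arbitrary, and not only set-indexed, joins are available here.
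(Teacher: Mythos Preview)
Your proposal is correct and follows essentially the same approach as the paper: the paper simply invokes Proposition \ref{existesupremos} together with the order-theoretic principle (cited from \cite{Stentrom}) that existence of all joins yields a complete lattice, and notes that this extends to partially ordered collections. Your write-up merely unpacks that citation by spelling out the meet-as-join-of-lower-bounds construction and the role of local smallness, which is exactly the intended content.
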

\qed

Since the assignment $\Delta$ defined before Proposition \ref{delta} is an anti-isomorphism of ordered collections, we have the following result.
\vspace{.2in}
\begin{lemma}\label{deltasupinf}
Let $\mathcal{A}$ be an abelian category that is locally small and bicomplete. Let us consider the assignment $\Delta: Pr(\mathcal{A})\rightarrow Pr(\mathcal{A}^{op})$. For any collection (not necessarily a set) $\{\sigma_i\}_{i\in I}$ in  $Pr(\mathcal{A})$ we have:
\vspace{.1in}
\begin{enumerate}
    \item [(a)] $\Delta(\bigvee_{i\in I}\sigma_{i})=\bigwedge_{i\in I}\Delta(\sigma_i)$
    \item [(b)] $\Delta(\bigwedge_{i\in I}\sigma_{i})=\bigvee_{i\in I}\Delta(\sigma_i)$
\end{enumerate}
\end{lemma}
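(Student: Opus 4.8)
The plan is to treat this as a purely order-theoretic consequence of the fact, established just before Proposition \ref{delta}, that $\Delta$ is an order anti-isomorphism with order-reversing inverse $\Delta'$, the two being mutually inverse bijections between $Pr(\mathcal{A})$ and $Pr(\mathcal{A}^{op})$. The join $\bigvee_{i\in I}\sigma_i$ and the meet $\bigwedge_{i\in I}\sigma_i$ in $Pr(\mathcal{A})$ exist by Corollary \ref{prcomplelatice}, so the left-hand sides of (a) and (b) are well-defined; the argument will simultaneously produce the right-hand sides as elements of $Pr(\mathcal{A}^{op})$ and show they coincide with those images.

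For (a), I would show directly that $\Delta(\bigvee_{i}\sigma_i)$ is the greatest lower bound of $\{\Delta(\sigma_i)\}_{i\in I}$. Since $\sigma_i\leq\bigvee_i\sigma_i$ for every $i$ and $\Delta$ reverses order, we get $\Delta(\bigvee_i\sigma_i)\leq\Delta(\sigma_i)$, so $\Delta(\bigvee_i\sigma_i)$ is a lower bound. To see it is the greatest, take any lower bound $\mu\in Pr(\mathcal{A}^{op})$, so $\mu\leq\Delta(\sigma_i)$ for all $i$; applying the order-reversing inverse $\Delta'$ gives $\sigma_i=\Delta'(\Delta(\sigma_i))\leq\Delta'(\mu)$ for all $i$, whence $\bigvee_i\sigma_i\leq\Delta'(\mu)$ by the universal property of the join; applying $\Delta$ once more reverses this to $\mu=\Delta(\Delta'(\mu))\leq\Delta(\bigvee_i\sigma_i)$. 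Thus $\Delta(\bigvee_i\sigma_i)=\bigwedge_i\Delta(\sigma_i)$.

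For (b), I would run the mirror-image argument, showing $\Delta(\bigwedge_i\sigma_i)$ is the least upper bound of $\{\Delta(\sigma_i)\}_{i\in I}$: from $\bigwedge_i\sigma_i\leq\sigma_i$ and order-reversal, $\Delta(\sigma_i)\leq\Delta(\bigwedge_i\sigma_i)$, so it is an upper bound; and for any upper bound $\nu$, applying $\Delta'$ yields $\Delta'(\nu)\leq\sigma_i$ for all $i$, hence $\Delta'(\nu)\leq\bigwedge_i\sigma_i$, and applying $\Delta$ gives $\Delta(\bigwedge_i\sigma_i)\leq\nu$. Alternatively, (b) can be deduced from (a) applied to the anti-isomorphism $\Delta'$ on the family $\{\Delta(\sigma_i)\}$ together with $\Delta'\circ\Delta=\mathrm{id}$.

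The argument is routine; the one point requiring care is to arrange it so that it invokes only the completeness of $Pr(\mathcal{A})$ (Corollary \ref{prcomplelatice}) and never presupposes that $Pr(\mathcal{A}^{op})$ is itself a complete lattice. In particular, I avoid asserting a priori the existence of joins and meets on the $\mathcal{A}^{op}$ side — these are produced by the argument itself, as the images under $\Delta$ of the corresponding meets and joins in $Pr(\mathcal{A})$. This is what sidesteps the only genuine obstacle, namely whether $\mathcal{A}^{op}$ is locally small (equivalently, whether $\mathcal{A}$ is co-well-powered), which one would otherwise have to confront before speaking of the lattice structure on $Pr(\mathcal{A}^{op})$.
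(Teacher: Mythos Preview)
Your proposal is correct and matches the paper's approach: the paper simply states that the lemma follows because $\Delta$ is an anti-isomorphism of ordered collections, and your argument spells out exactly this standard order-theoretic fact. Your added remark about avoiding any appeal to the completeness of $Pr(\mathcal{A}^{op})$ (i.e., to co-well-poweredness of $\mathcal{A}$) is a nice point of care that the paper leaves implicit.
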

\qed

We can describe the construction of the meet of any collection of preradicals $\left\{\xymatrix{\sigma_{i}: S_{i}\ar@{^(->}[r] & 1_{\mathcal{A}}}\right\}_{i\in I}$. In this case it is convenient to consider the collection of complete SQ-sequences:

\centerline{$\left\{\xymatrix{ S_i\ar@{^(->}[r]^{\sigma_i} & 1_{\mathcal{A}}\ar@{->>}[r]^{\sigma_i^*} & S_i^*}\right\}_{i\in I}$}

Again, for each $A\in\mathcal{A}$, let $I_A$ be the set contained in $I$ such that $\left\{\xymatrix{S_{i}(A)\ar@{^(->}[r]^(.6){(\sigma_{i})_{A}} & A}\right\}_{i\in I_A}$ is a complete irredundant set of representatives of the collection of subobjects $\left\{\xymatrix{S_{i}(A)\ar@{^(->}[r]^(.6){(\sigma_{i})_{A}} & A}\right\}_{i\in I}$, and therefore $\left\{\xymatrix{A\ar@{->>}[r]^{(\sigma_i^*)_A\ } & S_i^*(A)}\right\}_{i\in I_A}$ is a complete irredundant set of representatives of the collection of quotient objects $\left\{\xymatrix{A\ar@{->>}[r]^{(\sigma_i^*)_A\ } & S_i^*(A)}\right\}_{i\in I}$. And we have an endofunctor $P$ on $\mathcal{A}$, such that $A\mapsto \prod_{i\in {I_A}}S_{i}^*(A)$ and for any morphism  $g:A\to A'$, there is associated $P(g)$ which, by the universal property of the product, is the unique morphism such that for each $j\in I$ the following diagram commutes:
\begin{equation}\label{funtorP}   
\begin{aligned}
\xymatrix{
\prod_{i\in I_A}S_{i}^*(A)\ar[d]_{P(g)}\ar[r]^{\ \pi_{j}^{A}} & S_{j}^*(A)\ar[d]^{S_j^*(g)} \\
\prod_{i\in I_{A'}}S_{i}^*(A')\ar[r]_{\ \pi_{j}^{A'}} &S_{j}^*(A') } 
\end{aligned}
\end{equation}

Again, we have for each $j\in I$, a natural transformation $\xymatrix{\pi_j:P\ar[r] & S_j^*}$. And also a natural transformation described by the dual of Proposition \ref{conspresup}, whose proof is similar.
\vspace{.2in}
\begin{proposition}\label{conspreinf}
Let ${\mathcal C} = \left\{\xymatrix{\sigma_{i}: S_{i}\ar@{^(->}[r] & 1_{\mathcal{A}}}\right\}_{i\in I}$ be a collection of preradicals and let $P$ be the functor defined above. For each $A\in \mathcal{A}$, let $I_A$ be the set defined as above, and let $\tau^{\mathcal C}_A$ be the unique morphism  which, by the universal property of the product, completes for each $j\in I$ the following commutative diagram:

\begin{equation}\label{tauC}
\begin{aligned}
\xymatrix{A\ar[rd]^{(\sigma_{j}^*)_{A}}\ar[rr]^{\tau^{\mathcal C}_{A}} & & \prod_{i\in I_A}S_{i}^*(A)\ar[dl]_{\pi_{j}^{A}}\\
& S_{j}^*(A) &}
\end{aligned}
\end{equation}

\noindent Then $\tau^{\mathcal C}: 1_{\mathcal{A}}\longrightarrow P$ is a natural transformation.
\end{proposition}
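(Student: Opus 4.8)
The plan is to mirror the proof of Proposition \ref{conspresup}, but in the dual setting, exploiting the universal property of the product rather than the coproduct. The key point is that $\tau^{\mathcal C}_A$ is defined as the unique morphism into the product whose $j$-th component is $(\sigma_j^*)_A$, so establishing naturality amounts to verifying that the evident square built from a morphism $g:A\longrightarrow A'$ commutes, and this in turn follows by the uniqueness clause of the universal property once I check that two candidate morphisms agree after composing with each projection $\pi_j^{A'}$.

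Concretely, I would fix a morphism $g:A\longrightarrow A'$ in $\mathcal{A}$ and assemble, for each $j\in I$, a diagram dual to \eqref{diagcono}: the outer square is the functoriality square \eqref{funtorP} for $P$, the two triangles are the defining triangles \eqref{tauC} of $\tau^{\mathcal C}$ at $A$ and at $A'$, and the remaining square commutes by the naturality of $\sigma_j^*$ applied to $g$. The aim is to show that the two morphisms $P(g)\circ\tau^{\mathcal C}_A$ and $\tau^{\mathcal C}_{A'}\circ g$ coincide. By the universal property of the product $\prod_{i\in I_{A'}}S_i^*(A')$, it suffices to prove that both morphisms have the same composite with each projection $\pi_j^{A'}$.

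So the core computation is as follows. On one hand, $\pi_j^{A'}\circ P(g)\circ\tau^{\mathcal C}_A = S_j^*(g)\circ\pi_j^A\circ\tau^{\mathcal C}_A = S_j^*(g)\circ(\sigma_j^*)_A$, using \eqref{funtorP} and then \eqref{tauC} at $A$. On the other hand, $\pi_j^{A'}\circ\tau^{\mathcal C}_{A'}\circ g = (\sigma_j^*)_{A'}\circ g$, using \eqref{tauC} at $A'$. These two are equal precisely because $\sigma_j^*:1_{\mathcal A}\longrightarrow S_j^*$ is a natural transformation, which gives $S_j^*(g)\circ(\sigma_j^*)_A=(\sigma_j^*)_{A'}\circ g$. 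Since this holds for every $j\in I$, and since for $j\in I\setminus I_{A'}$ the projection factors through one of the chosen representatives, the uniqueness in the universal property of the product forces $P(g)\circ\tau^{\mathcal C}_A=\tau^{\mathcal C}_{A'}\circ g$, which is exactly the naturality square for $\tau^{\mathcal C}$.

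I do not expect any serious obstacle here, as the argument is genuinely dual to that of Proposition \ref{conspresup}; this is why the statement is phrased as being proven ``by the dual of Proposition \ref{conspresup}.'' The one bookkeeping subtlety worth care is the indexing: $I$ need not be a set, while the products are taken over the sets $I_{A}$ and $I_{A'}$ of irredundant representatives, and these representative sets may differ from $A$ to $A'$. One must check that the projections $\pi_j^{A}$ are well defined for every $j\in I$ (via the chosen representative in $I_A$) so that the defining triangles \eqref{tauC} make sense for all $j\in I$, not merely for $j\in I_A$; granting this, the uniqueness argument goes through verbatim.
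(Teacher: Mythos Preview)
Your proposal is correct and is precisely the dual of the proof of Proposition \ref{conspresup}, which is exactly what the paper intends: the paper omits the proof entirely, marking it with \qed after remarking that the proof is similar. Your handling of the indexing subtlety (that $I_A$ and $I_{A'}$ may differ while the triangles \eqref{tauC} must hold for all $j\in I$) is also appropriate and matches the convention established just before \eqref{funtorS} and \eqref{funtorP}.
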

\qed

Before constructing the meet of a collection of preradicals, we need another property of the duality assignment $\Delta$.
\vspace{.2in}
\begin{lemma}\label{deltakerim}
Let $\mathcal{A}$ be an abelian category.
\begin{enumerate}
    \item [(a)] Let $\delta: D\longrightarrow 1_{\mathcal{A}}$ be any natural transformation. Then $\Delta(\mathrm{Im}(\delta))=\mathrm{Ker}(\delta^{op})$
    \item [(b)]  Let $\rho: 1_{\mathcal{A}}\longrightarrow R$ be any natural transformation. Then $\Delta(\mathrm{Ker}(\rho))=\mathrm{Im}(\rho^{op})$
\end{enumerate}
\end{lemma}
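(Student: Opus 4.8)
The plan is to reduce both statements to the single standard fact that passage to the opposite category interchanges kernels and cokernels: for any natural transformation $f$ in $\mathrm{Fun}(\mathcal{A},\mathcal{A})$ one has $(\mathrm{Coker}(f))^{op}=\mathrm{Ker}(f^{op})$ and $(\mathrm{Ker}(f))^{op}=\mathrm{Coker}(f^{op})$ in $\mathrm{Fun}(\mathcal{A}^{op},\mathcal{A}^{op})$. This holds because kernels and cokernels in a functor category are computed objectwise, and objectwise the kernel of $f^{op}$ in $\mathcal{A}^{op}$ is exactly the cokernel of $f$ in $\mathcal{A}$. I would first settle part (a) directly, and then obtain part (b) by applying (a) inside the opposite category.

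For part (a), I would write $\tau:=\mathrm{Im}(\delta)$, so that $\tau$ is the image inclusion and $\delta=\tau\circ p$ with $p$ an epimorphism. The first step is to observe that $\tau^{*}=\mathrm{Coker}(\tau)=\mathrm{Coker}(\delta)$: since $p$ is an epimorphism, a morphism $g$ satisfies $g\circ\delta=0$ if and only if $g\circ\tau=0$, so $\delta$ and its image inclusion have the same cokernel and hence define the same quotient object. By the definition of $\Delta$ this gives $\Delta(\mathrm{Im}(\delta))=(\tau^{*})^{op}=(\mathrm{Coker}(\delta))^{op}$, and the duality $(\mathrm{Coker}(\delta))^{op}=\mathrm{Ker}(\delta^{op})$ recorded above finishes the argument.

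For part (b), rather than repeating a dual diagram chase, I would invoke part (a) in $\mathcal{A}^{op}$. Given $\rho:1_{\mathcal{A}}\to R$, the opposite $\rho^{op}:R^{op}\to 1_{\mathcal{A}^{op}}$ is a natural transformation with codomain the identity functor of $\mathcal{A}^{op}$, so applying part (a) in $\mathcal{A}^{op}$ to $\delta=\rho^{op}$ yields $\Delta_{\mathcal{A}^{op}}(\mathrm{Im}(\rho^{op}))=\mathrm{Ker}((\rho^{op})^{op})=\mathrm{Ker}(\rho)$. Since $\Delta_{\mathcal{A}^{op}}=\Delta'$ is the inverse of $\Delta=\Delta_{\mathcal{A}}$ (as noted before Proposition \ref{delta}), applying $\Delta$ to both sides and using $\Delta\circ\Delta'=\mathrm{id}$ gives $\mathrm{Im}(\rho^{op})=\Delta(\mathrm{Ker}(\rho))$, which is exactly the desired equality.

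The content here is essentially bookkeeping, so the only real obstacle is keeping the conventions straight: one must verify that the operator $(-)^{op}$ between $\mathrm{Fun}(\mathcal{A},\mathcal{A})$ and $\mathrm{Fun}(\mathcal{A}^{op},\mathcal{A}^{op})$ genuinely swaps kernels with cokernels, and that the asserted identities are equalities of subobjects of $1_{\mathcal{A}^{op}}$ (that is, of equivalence classes) rather than of chosen representatives. Once the objectwise description of kernels and cokernels in these functor categories is invoked, both equalities drop out.
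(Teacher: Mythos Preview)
Your proof is correct and essentially the same as the paper's. For (a) the paper phrases the argument via the SQ-sequence of $\mathrm{Im}(\delta)$ and its opposite, arriving at $\Delta(\mathrm{Im}(\delta))=\mathrm{Ker}(\mathrm{Im}(\delta)^{op})=\mathrm{Ker}(\delta^{op})$, which is exactly your computation $(\mathrm{Coker}(\tau))^{op}=(\mathrm{Coker}(\delta))^{op}=\mathrm{Ker}(\delta^{op})$ read in the opposite category; for (b) the paper simply says ``similar'', while you instead reduce to (a) via $\Delta'=\Delta_{\mathcal{A}^{op}}$, a device the paper itself employs elsewhere (e.g., in Propositions \ref{omegaradical} and \ref{psiomega}), so this is a cosmetic rather than a substantive difference.
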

\begin{proof}
To prove $(a)$, notice that if $\delta: D\longrightarrow 1_{\mathcal{A}}$ is a natural transformation in $\mathcal{A}$ then $\delta^{op}: 1_{\mathcal{A}^{op}}\longrightarrow D^{op}$ in $\mathcal{A}^{op}$. Let us consider the preradical $\xymatrix{\mathrm{Im}(\delta): D'\ar@{^(->}[r] & 1_{\mathcal{A}}}$. Then the SQ-sequence in $\mathcal{A}$:
$$\xymatrix{ D'\ar@{^(->}[r]^{\mathrm{Im}(\delta)} & 1_{\mathcal{A}}\ar@{->>}[r] & (D')^*}$$
\noindent induces the SQ-sequence in $\mathcal{A}^{op}$:
$$\xymatrix{ (D')^{*op}\ar@{^(->}[rr]^{\Delta(\mathrm{Im}(\delta))} & & 1_{\mathcal{A}^{op}}\ar@{->>}[r]^{\mathrm{Im}(\delta)^{op}} & (D')^{op}}$$
Therefore, $\Delta(\mathrm{Im}(\delta))=\mathrm{Ker}(\mathrm{Im}(\delta)^{op})=\mathrm{Ker}(\delta^{op})$.\\
The proof of $(b)$ is similar.
\end{proof}

The proof of the following result can be written in a similar way to that of Proposition \ref{existesupremos}, but we present another proof, using the duality of preradicals.
\vspace{.2in}
\begin{proposition}\label{existeinfimos}
Let $\mathcal{C}=\left\{\xymatrix{\sigma_{i}: S_{i}\ar@{^(->}[r] & 1_{\mathcal{A}}}\right\}_{i\in I}$ be a collection of preradicals. Then the preradical $\mathrm{Ker} (\tau^{\mathcal C})$ is the meet of $\mathcal{C}$. In other words, $\mathrm{Ker} (\tau^{\mathcal C})=\bigwedge_{i\in I}\sigma_{i}$.
\end{proposition}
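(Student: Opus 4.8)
The plan is to avoid repeating the direct, coproduct-style argument of Proposition \ref{existesupremos} and instead deduce the statement by dualizing the join construction through the assignment $\Delta$. Since $\mathcal{A}$ is bicomplete and locally small, $Pr(\mathcal{A})$ is a complete lattice (Corollary \ref{prcomplelatice}), so the meet $\bigwedge_{i\in I}\sigma_i$ certainly exists; it remains to identify it with $\mathrm{Ker}(\tau^{\mathcal C})$. The first step is to compute the image of $\mathrm{Ker}(\tau^{\mathcal C})$ under $\Delta$. Applying Lemma \ref{deltakerim}(b) to the natural transformation $\rho=\tau^{\mathcal C}:1_{\mathcal{A}}\longrightarrow P$ gives immediately $\Delta(\mathrm{Ker}(\tau^{\mathcal C}))=\mathrm{Im}((\tau^{\mathcal C})^{op})$.

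The core of the argument, and the step I expect to require the most care, is to recognize $(\tau^{\mathcal C})^{op}$ as the join-defining transformation of Proposition \ref{conspresup} for the dual collection. Let $\mathcal{C}'=\{\Delta(\sigma_i)\}_{i\in I}$, a collection of preradicals of $\mathcal{A}^{op}$, where $\Delta(\sigma_i)=(\sigma_i^*)^{op}$ has underlying functor $(S_i^*)^{op}$. I would verify that the functor built in Proposition \ref{conspresup} for $\mathcal{C}'$ in $\mathcal{A}^{op}$ is exactly $P^{op}$, because a coproduct in $\mathcal{A}^{op}$ is a product in $\mathcal{A}$, and that the coproduct inclusions in $\mathcal{A}^{op}$ dualize to the product projections $\pi_j^A$ of \eqref{funtorP}. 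Matching the defining diagram \eqref{sigmaC} for $\mathcal{C}'$ against the defining diagram \eqref{tauC} for $\tau^{\mathcal C}$ after reversing all arrows then yields $\sigma^{\mathcal{C}'}=(\tau^{\mathcal C})^{op}$. Two bookkeeping points must be checked here: that the index set $I_A$ selected for the subobjects $(\sigma_i)_A$ also serves for the quotient objects $(\sigma_i^*)_A$ — which is precisely the SQ-sequence remark recorded just before Proposition \ref{conspreinf} — and that the inclusions $(\sigma_j^*)_A^{op}$ of $\Delta(\sigma_j)$ are the opposites of the projections appearing in \eqref{tauC}.

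With this identification in hand, Proposition \ref{existesupremos} applied in the opposite category $\mathcal{A}^{op}$ gives $\mathrm{Im}((\tau^{\mathcal C})^{op})=\mathrm{Im}(\sigma^{\mathcal{C}'})=\bigvee_{i\in I}\Delta(\sigma_i)$. Finally, by Lemma \ref{deltasupinf}(b) this last join equals $\Delta\!\left(\bigwedge_{i\in I}\sigma_i\right)$. Chaining the equalities produces $\Delta(\mathrm{Ker}(\tau^{\mathcal C}))=\Delta\!\left(\bigwedge_{i\in I}\sigma_i\right)$, and since $\Delta$ is an anti-isomorphism of ordered collections, hence in particular injective, cancelling $\Delta$ yields $\mathrm{Ker}(\tau^{\mathcal C})=\bigwedge_{i\in I}\sigma_i$, as claimed.
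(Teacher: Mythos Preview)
Your proposal is correct and follows essentially the same duality approach as the paper's proof: both pass to the collection $\mathcal{C}'=\{\Delta(\sigma_i)\}_{i\in I}$ in $\mathcal{A}^{op}$, identify $(\tau^{\mathcal C})^{op}$ with the join-defining transformation $\sigma^{\mathcal{C}'}$ of Proposition \ref{conspresup}, invoke Proposition \ref{existesupremos} and Lemma \ref{deltasupinf}, and use Lemma \ref{deltakerim} to translate between $\mathrm{Ker}$ and $\mathrm{Im}$. The only cosmetic difference is that you apply $\Delta$ and then cancel it by injectivity, whereas the paper applies the inverse $\Delta'$ directly; your extra bookkeeping remarks about $I_A$ and the SQ-sequence correspondence are welcome clarifications of a point the paper leaves implicit.
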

\begin{proof}
For such a collection $\mathcal{C}$ of preradicals, let us consider the collection $\mathcal{C}'=\left\{\xymatrix{\Delta(\sigma_{i}): (S_{i}^*)^{op}\ar@{^(->}[r] & 1_{\mathcal{A}^{op}}}\right\}_{i\in I}$ of preradicals in $\mathcal{A}^{op}$. By Proposition \ref{conspresup}, we have the natural transformation $(\sigma')^{\mathcal C'}: S'\longrightarrow 1_{\mathcal{A}^{op}}$. And by Proposition \ref{existesupremos} and Lemma \ref{deltasupinf} we have in $Pr(\mathcal{A}^{op})$: 
\begin{equation}\label{imsupinf}
\begin{aligned}
\mathrm{Im} ((\sigma')^{\mathcal C'})=\bigvee_{i\in I}\Delta(\sigma_{i})=\Delta\left (\bigwedge_{i\in I}\sigma_{i}\right )
\end{aligned}  
\end{equation}

Now, by definition of the natural transformations $(\sigma')^{\mathcal C'}$ and $\tau^{\mathcal C}$, it is clear that $((\sigma')^{\mathcal C'})^{op}=\tau^{\mathcal C}$. Therefore, applying the inverse assignment $\Delta': Pr(\mathcal{A}')\longrightarrow Pr(\mathcal{A})$ to (\ref{imsupinf}) and by Lemma \ref{deltakerim} we have:

$$\mathrm{Ker} (\tau^{\mathcal C})=\Delta'(\mathrm{Im} ((\sigma')^{\mathcal C'}))=\bigwedge_{i\in I}\sigma_{i}$$

\end{proof}

The following result states that the collection of all idempotent preradicals is closed under arbitrary joins.
\vspace{.2in}

\begin{proposition}\label{supreidemp}
Let $\mathcal{C}=\left\{\xymatrix{\sigma_{i}:S_{i}\ar@{^(->}[r] & 1_{\mathcal{A}}}\right\}_{i\in I}$ be a collection of idempotent preradicals. Then  $\bigvee_{i\in I}\sigma_{i}$ is an idempotent preradical.
\end{proposition}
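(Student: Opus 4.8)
The plan is to realize the join as an image and then apply the idempotency criterion of Lemma \ref{idempotente1}. Write $\rho:=\bigvee_{i\in I}\sigma_i$, with underlying functor $R$, so that $\rho\colon R\hookrightarrow 1_{\mathcal A}$. By Proposition \ref{existesupremos} we have $\rho=\mathrm{Im}(\sigma^{\mathcal C})$, where $\sigma^{\mathcal C}\colon S\longrightarrow 1_{\mathcal A}$ is the natural transformation built from $S=\bigoplus_{i\in I}S_i$. Taking $H=S$, $\sigma=\sigma^{\mathcal C}$ and $\tau=\rho$ in Lemma \ref{idempotente1}, it suffices to prove that $S\rho\colon SR\longrightarrow S$ is an epimorphism in $\mathrm{Fun}(\mathcal A,\mathcal A)$; equivalently, that $[S\rho]_A=S(\rho_A)\colon S(R(A))\longrightarrow S(A)$ is an epimorphism for every $A\in\mathcal A$.

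The heart of the argument is a componentwise claim: for each index $j\in I$ and each $A\in\mathcal A$, the map $S_j(\rho_A)\colon S_j(R(A))\longrightarrow S_j(A)$ is an epimorphism. To establish it I would use that $\sigma_j\leq\rho$ (since $\rho$ is an upper bound of $\mathcal C$), which furnishes a natural transformation $\kappa_j\colon S_j\longrightarrow R$ with $\rho\circ\kappa_j=\sigma_j$. Applying the functor $S_j$ and evaluating at $A$ gives $[S_j\sigma_j]_A=S_j(\rho_A)\circ S_j((\kappa_j)_A)$. By the remark following Definition \ref{defiidempoten}, the idempotency of $\sigma_j$ means precisely that $S_j\sigma_j$ is an isomorphism, so $[S_j\sigma_j]_A$ is an isomorphism; since it factors through $S_j(\rho_A)$ on the right, $S_j(\rho_A)$ must be an epimorphism.

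It then remains to lift this to $S(\rho_A)$. By the defining property of the functor $S$ (diagram \eqref{funtorS}) the inclusions satisfy $S(\rho_A)\circ\iota_j^{R(A)}=\iota_j^A\circ S_j(\rho_A)$, whence $\mathrm{Im}(\iota_j^A)=\mathrm{Im}(\iota_j^A\circ S_j(\rho_A))=\mathrm{Im}(S(\rho_A)\circ\iota_j^{R(A)})\leq\mathrm{Im}(S(\rho_A))$ for every $j\in I_A$, the first equality because $S_j(\rho_A)$ is epi, the second by the relation above, and the inequality by Lemma \ref{lemimker}(a). Because the coproduct inclusions $\{\iota_j^A\}_{j\in I_A}$ are jointly epimorphic in the abelian category $\mathcal A$, their images generate $S(A)$, forcing $\mathrm{Im}(S(\rho_A))=S(A)$, i.e. $S(\rho_A)$ is an epimorphism. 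Since this holds for all $A$, the transformation $S\rho$ is an epimorphism and Lemma \ref{idempotente1} yields that $\rho=\bigvee_{i\in I}\sigma_i$ is idempotent.

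I expect the main obstacle to be the passage from the componentwise epimorphisms to the epimorphism $S(\rho_A)$: one must handle the object-dependent irredundant index sets $I_A$ and $I_{R(A)}$ carefully, and justify that the family of coproduct inclusions is jointly epic, so that ``each component epi'' suffices even for a possibly infinite coproduct. An alternative route, once the dual statement for radicals is available, would be to deduce the result from the closure of radicals under arbitrary meets via the duality assignment $\Delta$, using Corollary \ref{radidem} together with Lemma \ref{deltasupinf}; but the direct argument above through Lemma \ref{idempotente1} seems most economical.
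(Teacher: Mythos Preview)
Your proof is correct and follows essentially the same route as the paper: reduce via Lemma \ref{idempotente1} to showing $S\rho$ is an epimorphism, obtain that each $S_j(\rho_A)$ is epi from the idempotency of $\sigma_j$, and then pass to the coproduct. Your treatment of the last step (using diagram \eqref{funtorS} and joint epimorphicity of the coproduct inclusions) is in fact more careful than the paper's, which simply asserts $S(\mu_A^{\mathcal C})=\bigoplus_{i\in I}S_i(\mu_A^{\mathcal C})$ without addressing the possibly different irredundant index sets.
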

\begin{proof}
For such a collection $\mathcal{C}$, let $\xymatrix{\mu^{\mathcal{C}}:=\bigvee_{i\in I}\sigma_{i}:H\ar@{^(->}[r] &  1_{\mathcal{A}}}$. By Proposition \ref{existesupremos},  $\mu^{\mathcal{C}}=\mathrm{Im}(\sigma^{\mathcal{C}})$, where $\sigma^{\mathcal{C}}:S\longrightarrow 1_{\mathcal{A}}$ is the natural transformation described in Proposition \ref{conspresup}, and $S=\bigoplus_{i\in {I}}S_{i}$ is the funtor described in (\ref{funtorS}). By Lemma \ref{idempotente1}, to show that $\mu^{\mathcal{C}}$ is idempotent, it is enough to prove that $S\mu^{\mathcal{C}}$ is an epimorphism.\\

Let  $A\in \mathcal{A}$ and $j\in I$. Combining (\ref{sigmaC}) with the epi-mono factorization of $\sigma^{\mathcal{C}}_A$ we have the following commutative diagram: \vspace{5pt}

\centerline{\xymatrix{ & & S(A)\ar[d]^{\sigma^{\mathcal{C}}_A}\ar@{->>}[drr]^{\nu_{A}^{\mathcal{C}}} & &\\
S_{i}(A)\ar@{^(->}[rr]^{(\sigma_{i})_{A}}\ar@{^(->}[urr]^{\iota_{i}^{A}} & & A& & H(A)\ar@{_(->}[ll]_{\mu_{A}^{\mathcal{C}}} }}

\vspace{5pt}
Now, applying the funtor $S_{j}$ to this diagram we obtain the commutative diagram:\vspace{5pt}

\centerline{\xymatrix{ & & S_jS(A)\ar[d]^{S_i(\sigma^{\mathcal{C}}_A)}\ar[drr]^{S_i(\nu_{A}^{\mathcal{C}})} & &\\
S_jS_{i}(A)\ar[rr]^{S_i((\sigma_{i})_{A})}\ar[urr]^{S_i(\iota_{i}^{A})} & & S_i(A) & & S_iH(A)\ar[ll]_{S_i(\mu_{A}^{\mathcal{C}})} }}

\vspace{5pt}
Since $\sigma_{i}$ is idempotent, by Definition \ref{defiidempoten} $S_{i}\big((\sigma_{i})_{A}\big)$ is an isomorphism. Therefore  $S_{i}(\mu_{A}^{\mathcal{C}})$ is an epimorphism, for each $i\in I$. This implies that $S(\mu_{A}^{\mathcal{C}})=\bigoplus_{i\in I}S_{i}(\mu_{A}^{\mathcal{C}})$ is an epimorphism in $\mathcal{A}$. Therefore $S\mu^{\mathcal{C}}$ is an epimorphism in $\mathrm{Fun}(\mathcal{A},\mathcal{A})$. By Lemma \ref{idempotente1}, we conclude that $\mu^{\mathcal{C}}=\bigvee_{i\in I}\sigma_{i}$ is idempotent.
\end{proof} 

\begin{corollary}\label{idemcomplete}
Let $\mathcal{A}$ be an
abelian category which is bicomplete and locally small. Then $Idem(\mathcal{A})$ is a complete lattice.
\end{corollary}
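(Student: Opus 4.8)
The plan is to realize $Idem(\mathcal{A})$ as a subcollection of the complete lattice $Pr(\mathcal{A})$ that is closed under arbitrary joins, and then to invoke the same order-theoretic principle already used for $Pr(\mathcal{A})$ (see \cite{Stentrom}, Chapter III, Proposition 1.2), namely that a partially ordered collection in which every subcollection admits a least upper bound is automatically a complete lattice. The point is that closure under joins suffices to force the existence of all meets as well.

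First I would fix an arbitrary collection $\mathcal{D}=\{\sigma_i\}_{i\in I}$ of idempotent preradicals. By Corollary \ref{prcomplelatice}, $Pr(\mathcal{A})$ is a complete lattice, so $\bigvee_{i\in I}\sigma_i$ exists in $Pr(\mathcal{A})$; by Proposition \ref{supreidemp} this join is again idempotent, hence it lies in $Idem(\mathcal{A})$. Since $Idem(\mathcal{A})\subseteq Pr(\mathcal{A})$ carries the induced order, a least upper bound computed in $Pr(\mathcal{A})$ is a fortiori the least upper bound inside $Idem(\mathcal{A})$. Thus every subcollection of $Idem(\mathcal{A})$ possesses a least upper bound that again belongs to $Idem(\mathcal{A})$; in particular the least element of $Pr(\mathcal{A})$, the zero preradical, is idempotent (as $0\cdot 0=0$) and serves as the bottom of $Idem(\mathcal{A})$, so even the empty join stays within $Idem(\mathcal{A})$.

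Having established that all joins exist in $Idem(\mathcal{A})$, I would then apply the cited principle, extended from posets to partially ordered collections exactly as was done in the proof of Corollary \ref{prcomplelatice}. Concretely, the meet of $\mathcal{D}$ inside $Idem(\mathcal{A})$ is obtained as the join, taken in $Idem(\mathcal{A})$, of the collection of all idempotent preradicals that are lower bounds of $\mathcal{D}$; this collection is nonempty because the zero preradical is a lower bound of everything, and by the previous step its join again lies in $Idem(\mathcal{A})$ and is readily checked to be the greatest idempotent lower bound of $\mathcal{D}$.

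The only genuine subtlety to flag is that the meet in $Idem(\mathcal{A})$ is in general \emph{not} the meet computed in $Pr(\mathcal{A})$, since an arbitrary meet of idempotent preradicals need not be idempotent. Consequently the completeness of $Idem(\mathcal{A})$ does not follow merely from that of $Pr(\mathcal{A})$: the essential input is the closure under arbitrary joins supplied by Proposition \ref{supreidemp}, combined with the abstract fact that the existence of all suprema forces the existence of all infima. This is where the real content of the corollary resides, and it is the step I would be most careful to justify.
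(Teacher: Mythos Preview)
Your argument is correct and is precisely the one the paper intends: the corollary is stated immediately after Proposition \ref{supreidemp} and marked \qed, relying on closure of $Idem(\mathcal{A})$ under arbitrary joins in $Pr(\mathcal{A})$ together with the same order-theoretic principle from \cite{Stentrom} invoked before Corollary \ref{prcomplelatice}. Your explicit remark that meets in $Idem(\mathcal{A})$ need not agree with meets in $Pr(\mathcal{A})$ is a useful clarification the paper leaves implicit.
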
\qed
\vspace{.1in} 

Alpha preradicals ``generate'' all preradicals, as is stated in the following result.\\

\begin{proposition}\label{supalfa}
Let $\xymatrix{\tau: T\ar@{^(->}[r] & 1_{\mathcal{A}}}$ be any preradical in $\mathcal{A}$. Then $\tau=\bigvee_{M\in \mathcal{A}}\alpha_{\tau_{M}}$, where, for each $M\in \mathcal{A}$, $\tau_{M}:T(M)\longrightarrow M$ is the corresponding morphism in $\mathcal{A}$. 
\end{proposition}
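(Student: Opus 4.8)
The plan is to establish the two inequalities $\bigvee_{M\in\mathcal{A}}\alpha_{\tau_M}\leq\tau$ and $\tau\leq\bigvee_{M\in\mathcal{A}}\alpha_{\tau_M}$ separately, using that the join exists by Corollary \ref{prcomplelatice} and is characterized by its universal property as the least upper bound. Throughout, inequalities of preradicals are tested objectwise.

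First I would show that $\tau$ is an upper bound of the collection $\{\alpha_{\tau_M}\}_{M\in\mathcal{A}}$, i.e. that $\alpha_{\tau_M}\leq\tau$ for every $M$. Fix $M$ and recall that $\alpha_{\tau_M}=\mathrm{Im}(\agot_{\tau_M})$, where $\agot_{\tau_M}\colon\mathbb{A}_M^{T(M)}\longrightarrow 1_{\mathcal{A}}$ is determined, at each $L$, by $[\agot_{\tau_M}]_L\circ i_f=f\circ\tau_M$ for $f\in\mathrm{Hom}_{\mathcal{A}}(M,L)$ (this is diagram \eqref{agotL} with $N=T(M)$ and $h=\tau_M$). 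The key observation is that naturality of $\tau$ gives $f\circ\tau_M=\tau_L\circ T(f)$. Hence each composite $[\agot_{\tau_M}]_L\circ i_f$ factors through $\tau_L$, and by the universal property of the coproduct the whole morphism $[\agot_{\tau_M}]_L$ factors as $\tau_L\circ\theta_L$ for a suitable $\theta_L$. Lemma \ref{lemimker}(a) then yields $[\alpha_{\tau_M}]_L=\mathrm{Im}([\agot_{\tau_M}]_L)\leq\mathrm{Im}(\tau_L)=T(L)$ for every $L$, that is, $\alpha_{\tau_M}\leq\tau$. Consequently $\bigvee_{M\in\mathcal{A}}\alpha_{\tau_M}\leq\tau$.

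For the reverse inequality I would argue objectwise. Since $\bigvee_{M\in\mathcal{A}}\alpha_{\tau_M}$ is an upper bound, for each object $A$ we have in particular $\alpha_{\tau_A}\leq\bigvee_{M\in\mathcal{A}}\alpha_{\tau_M}$, so it suffices to show that $T(A)\leq[\alpha_{\tau_A}]_A$ as subobjects of $A$. This is where the identity morphism enters: taking $M=A$ and $f=1_A$ in diagram \eqref{agotL} gives $[\agot_{\tau_A}]_A\circ i_{1_A}=1_A\circ\tau_A=\tau_A$, so $\tau_A$ factors through $[\agot_{\tau_A}]_A$. By Lemma \ref{lemimker}(a), $T(A)=\mathrm{Im}(\tau_A)\leq\mathrm{Im}([\agot_{\tau_A}]_A)=[\alpha_{\tau_A}]_A$. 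Evaluating at $A$ the inequality $\alpha_{\tau_A}\leq\bigvee_{M\in\mathcal{A}}\alpha_{\tau_M}$ then gives $T(A)\leq[\alpha_{\tau_A}]_A\leq\big(\bigvee_{M\in\mathcal{A}}\alpha_{\tau_M}\big)_A$; since $A$ is arbitrary, $\tau\leq\bigvee_{M\in\mathcal{A}}\alpha_{\tau_M}$. Combining the two inequalities yields the claimed equality.

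I expect no serious obstacle here: both directions reduce to applications of Lemma \ref{lemimker}(a), the substantive points being the use of the naturality square of $\tau$ for the upper-bound direction and the evaluation at the index $M=A$ with $f=1_A$ for the reverse direction. The only mild care needed is to keep the roles of the two objects straight, since in $\alpha_{\tau_M}$ both the source $N=T(M)$ and the target $M$ vary with $M$, and to remember that the comparison $\alpha_{\tau_A}\leq\bigvee_{M}\alpha_{\tau_M}$ is used only at the single object $A$.
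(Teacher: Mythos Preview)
Your proof is correct and rests on the same two ingredients as the paper's argument: the naturality square $f\circ\tau_M=\tau_L\circ T(f)$ to show each $\alpha_{\tau_M}\leq\tau$, and the evaluation at $M=A$, $f=1_A$ to obtain $\tau_A\leq[\alpha_{\tau_A}]_A$. The packaging differs: the paper invokes the explicit coproduct construction of the join from Proposition~\ref{existesupremos} and proves the equality of subobjects at each $C$ by showing that a test morphism $\delta$ kills $\mu_C^{\mathcal A}$ if and only if it kills $\tau_C$ (a cokernel comparison), whereas you work entirely through the universal property of the join together with Lemma~\ref{lemimker}(a). Your route is a bit more streamlined since it never needs the internal structure of $\bigvee$, while the paper's version makes the interaction with the diagram defining the join more visible; both rely on the (easy) fact, used implicitly in the paper as well, that objectwise inequality of subfunctors of $1_{\mathcal A}$ upgrades to an inequality of preradicals.
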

\begin{proof}
Let us consider $\xymatrix{\tau: T\ar@{^(->}[r] & 1_{\mathcal{A}}}$. By Definition \ref{definicionalpha}, for each $M\in\mathcal{A}$ we have $\xymatrix{\alpha_{\tau_M}:=\mathrm{Im}(\agot_{\tau_M}):A_{\tau_M}\ar@{^(->}[r] & 1_{\mathcal{A}}}$, where $\agot_{\tau_M}:\mathbb{A}_{M}^{T(M)}\longrightarrow 1_{\mathcal{A}}$ is the natural transformation defined in Proposition \ref{defagot}. Let $C\in \mathcal{A}$. Recall that $\mathbb{A}_{M}^{T(M)}(C)=T(M)^{(\mathrm{Hom}_{\mathcal{A}}(M,C))}$. By definition of  $\agot_{\tau_M}$ and its epi-mono factorization, we have, respectively, commutative triangles $(I)$ (for each morphism $x:M\rightarrow C$) and $(II)$ of the following diagram. On the other hand, let us consider the funtor $S:=\bigoplus_{N\in \mathcal{A}}A_{\tau_{N}}$, and the natural transformation $\sigma^{\mathcal A}: S\longrightarrow 1_{\mathcal{A}}$ described in Proposition \ref{conspresup}. Let $\xymatrix{\mu^{\mathcal{A}}:=\bigvee_{M\in \mathcal{A}}\alpha_{\tau_{M}}:H\ar@{^(->}[r] &  1_{\mathcal{A}}}$. For this $C\in\mathcal{A}$ consider commutative triangles $(III)$ and $(IV)$ of the following diagram, which correspond, respectively, to the definition of $\sigma_C^{\mathcal A}$ and its epi-mono factorization. Also consider the morphism $\tau_C$ and the commutative square $(V)$ that comes from the naturality of $\tau$, for each morphism $x:M\rightarrow C$,

\centerline{$\xymatrix{& & \bigoplus_{N\in \mathcal{A}}A_{\tau_{N}}(C)\ar@{->>}[dr]^{\nu_{C}^{\mathcal A}}\ar[dd]^{\sigma_{C}^{\mathcal A}} & &\\
& A_{\tau_{M}}(C)\ar@{^(->}[dr]^{(\alpha_{\tau_{M}})_{C}}\ar@{^(->}[ur]^{\iota_{M}^{C}}\ar@{}[d]|{(\textbf{II})}\ar@{}[r]|{   (\textbf{III})} & &  H(C)\ar@{^(->}[dl]^{\mu_{C}^{\mathcal A}}\ar@{}[l]|{(\textbf{IV})} & \\ 
T(M)^{(\mathrm{Hom}_{\mathcal{A}}(M,C))}\ar[rr]_{[\agot_{\tau_{M}}]_{C}}\ar@{->>}[ur]^{(p_{M})_C} & & C\ar[r]_{\delta}   & D\\
  & M\ar[ur]_{x}\ar@{}[u]|{(\textbf{I})}\ar@{}[r]|{(\textbf{V})} &  T(C)\ar@{^(->}[u]_{\tau_{C}}\\
  & T(M) \ar@{^(->}[u]_{\tau_{M}}\ar[ur]_{T(x)}\ar@{}[uu]|{}\ar[luu]^{i_{x}} & }$}
\vspace{.2in}

Let $\delta:C\longrightarrow D$ be any morphism in $\mathcal{A}$. 
We claim that $\delta\circ \mu_{C}^{\mathcal A}=0$ if and only if $\delta\circ \tau_{C}=0$. Suppose first that $\delta\circ \mu_{C}^{\mathcal A}=0$. Following this commutative diagram, we have that $\delta\circ x\circ \tau_{M}=0$ for all $M\in\mathcal{A}$ and for all $x\in \mathrm{Hom}_{\mathcal{A}}(M,C)$. In particular, taking $M=C$ and $x=1_{C}$, we have $\delta\circ \tau_{C}=0$. Conversely, suppose that $\delta\circ \tau_{C}=0$. Then $\delta\circ x\circ \tau_{M}=0$  for all $M\in\mathcal{A}$ and for all $x\in \mathrm{Hom}_{\mathcal{A}}(M,C)$. Hence $\delta\circ [\agot_{\tau_{M}}]_{C}\circ i_{x}=0$ for all $x\in \mathrm{Hom}_{\mathcal{A}}(M,C)$. By the universal property of the coproduct we have $\delta\circ [\agot_{\tau_{M}}]_{C}=0$. Since $(p_{C})_M$ is an epimorphism we have that $\delta\circ (\alpha_{\tau_{M}})_{C}=0$. By the commutativity of $(III)$
 we have that $\delta\circ \sigma_{C}^{\mathcal A}\circ \iota_{M}^{C}=0$. This equality holds for each $M\in \mathcal{A}$, so by the universal property of the coproduct we have that $\delta\circ \sigma_{C}^{\mathcal A}=0$. By the commutativity of $(IV)$ and since $\nu_{C}^{\mathcal A}$ is an epimorphism, we have $\delta\circ \mu_{C}^{\mathcal A}=0$.\\
 
We conclude that $\tau_C=\mu_C^{\mathcal A}$ for all $C\in\mathcal{A}$. Therefore $\tau\simeq\mu^{\mathcal A}=\bigvee_{M\in \mathcal{A}}\alpha_{\tau_{M}}.$
 \end{proof}

Using Proposition \ref{supalfa}, in particular we can describe idempotent preradicals. For any preradical $\xymatrix{\tau: T\ar@{^(->}[r] & 1_{\mathcal{A}}}$ we define the following class of objects:\\

\centerline{$\mathbb{T}_{\tau}:=\{M\in \mathcal{A}\mid \tau_{M}:T(M)\longrightarrow M\,\,\text{is an isomorphism}\}$}

\vspace{.2in}
\begin{proposition}\label{idemsupalfa}
Let $\xymatrix{\tau: T\ar@{^(->}[r] & 1_{\mathcal{A}}}$
be any preradical of $\mathcal{A}$. Then $\tau$ is idempotent if, and only if, $\tau=\bigvee_{M\in \mathbb{T}_{\tau}}\alpha_{1_{M}}$.
\end{proposition}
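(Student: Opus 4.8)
The plan is to prove the two implications separately, the main tool being Proposition \ref{supalfa}, which already expresses any preradical as $\tau=\bigvee_{M\in\mathcal{A}}\alpha_{\tau_M}$. The sufficiency is the easy direction: assuming $\tau=\bigvee_{M\in\mathbb{T}_{\tau}}\alpha_{1_M}$, each $1_M:M\to M$ is an isomorphism, so by Proposition \ref{alphaidemhiso} every $\alpha_{1_M}$ is idempotent; since $\mathcal{A}$ is bicomplete and locally small all joins exist, and Proposition \ref{supreidemp} says a join of idempotent preradicals is again idempotent, whence $\tau$ is idempotent.

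For the necessity, assume $\tau$ is idempotent. Starting from $\tau=\bigvee_{M\in\mathcal{A}}\alpha_{\tau_M}$, I would first note that for $M\in\mathbb{T}_{\tau}$ the morphism $\tau_M$ is an isomorphism, so Lemma \ref{alfaiso} gives $\alpha_{\tau_M}=\alpha_{1_M}$; hence $\bigvee_{M\in\mathbb{T}_{\tau}}\alpha_{1_M}=\bigvee_{M\in\mathbb{T}_{\tau}}\alpha_{\tau_M}\le\bigvee_{M\in\mathcal{A}}\alpha_{\tau_M}=\tau$. The whole problem therefore reduces to the reverse inequality, i.e. to showing that the joinands $\alpha_{\tau_M}$ with $M\notin\mathbb{T}_{\tau}$ are redundant: concretely, that $\alpha_{\tau_M}\le\bigvee_{M'\in\mathbb{T}_{\tau}}\alpha_{1_{M'}}$ for every $M\in\mathcal{A}$.

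The key step is to bring $T(M)$ into play. Because $\tau$ is idempotent, the Remark following Definition \ref{defiidempoten} tells us that $\tau T=T\tau:T^2\to T$ is an isomorphism in $\mathrm{Fun}(\mathcal{A},\mathcal{A})$, and evaluating at $M$ shows $\tau_{T(M)}:T(T(M))\to T(M)$ is an isomorphism; thus $T(M)\in\mathbb{T}_{\tau}$ for every $M\in\mathcal{A}$. I would then compare the natural transformations $\agot_{\tau_M}:\mathbb{A}_{M}^{T(M)}\to 1_{\mathcal{A}}$ and $\agot_{1_{T(M)}}:\mathbb{A}_{T(M)}^{T(M)}\to 1_{\mathcal{A}}$. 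The assignment $f\mapsto f\circ\tau_M$ defines a function $\mathrm{Hom}_{\mathcal{A}}(M,L)\to\mathrm{Hom}_{\mathcal{A}}(T(M),L)$, which by the universal property of the coproduct induces a natural transformation $\Theta:\mathbb{A}_{M}^{T(M)}\to\mathbb{A}_{T(M)}^{T(M)}$; a direct check on the defining triangles \eqref{agotL} shows $\agot_{\tau_M}=\agot_{1_{T(M)}}\circ\Theta$, since sending the copy indexed by $f$ through $f\circ\tau_M$ is the same as first sending it to the copy indexed by $f\circ\tau_M$ and then through $(f\circ\tau_M)\circ 1_{T(M)}$. Applying Lemma \ref{lemimker}(a) in $\mathrm{Fun}(\mathcal{A},\mathcal{A})$ then yields $\alpha_{\tau_M}=\mathrm{Im}(\agot_{\tau_M})\le\mathrm{Im}(\agot_{1_{T(M)}})=\alpha_{1_{T(M)}}$, and since $T(M)\in\mathbb{T}_{\tau}$ this is a joinand of $\bigvee_{M'\in\mathbb{T}_{\tau}}\alpha_{1_{M'}}$. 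Taking the join over all $M\in\mathcal{A}$ and combining with the inequality from the previous paragraph gives $\tau=\bigvee_{M\in\mathbb{T}_{\tau}}\alpha_{1_M}$.

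I expect the main obstacle to be the bookkeeping in the factorization $\agot_{\tau_M}=\agot_{1_{T(M)}}\circ\Theta$: one must verify that the index reassignment $f\mapsto f\circ\tau_M$ is correctly realized by the universal property of the coproduct and that $\Theta$ is genuinely natural in $L$, so that Lemma \ref{lemimker}(a) can be invoked at the level of $\mathrm{Fun}(\mathcal{A},\mathcal{A})$ rather than merely objectwise. Everything else is a direct application of the cited results.
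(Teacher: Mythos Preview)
Your proof is correct and follows essentially the same route as the paper: both directions are handled identically, and for the key inequality $\alpha_{\tau_M}\le\alpha_{1_{T(M)}}$ (with $T(M)\in\mathbb{T}_\tau$ by idempotency) the paper compares cokernels objectwise via the same map $f\mapsto f\circ\tau_M$ that underlies your factorization $\agot_{\tau_M}=\agot_{1_{T(M)}}\circ\Theta$. Your concern about the naturality of $\Theta$ is unnecessary, since the paper argues entirely at the level of a fixed object $L$ and the order on preradicals is determined componentwise.
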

\begin{proof}
Let $\xymatrix{\tau: T\ar@{^(->}[r] & 1_{\mathcal{A}}}$ be a preradical and suppose that it is idempotent. By Proposition \ref{supalfa} we have $\tau=\bigvee_{M\in \mathcal{A}}\alpha_{\tau_{M}}$. By Lemma \ref{alfaiso}, for each $M\in\mathbb{T}_{\tau}$ we have $\alpha_{\tau_{M}}=\alpha_{1_{M}}$. Therefore $\bigvee_{M\in \mathbb{T}_{\tau}}\alpha_{1_{M}}\leq\tau$. On the other hand, let $N\in\mathcal{A}$ and for each $L\in\mathcal{A}$ consider, for any morphisms $f:N\rightarrow L$ and $g:T(N)\rightarrow L$, the following diagrams:

\centerline{$
\xymatrix{T(N)^{(\mathrm{Hom}_{\mathcal{A}}(N,L))}\ar[rr]^{[\agot_{\tau_N}]_{L}} & & L\\
& T(N)\ar[ul]^{i_{f}}\ar[ur]_{f\circ\tau_N}}$}

\centerline{$
\xymatrix{T(N)^{(\mathrm{Hom}_{\mathcal{A}}(T(N),L))}\ar[rr]^{[\agot_{1_{T(N)}}]_{L}} & & L\\
& T(N)\ar[ul]^{i_{g}}\ar[ur]_{g}}$}

Now let $\delta: L\rightarrow K$ be any morphism, and suppose that $\delta\circ[\agot_{1_{T(N)}}]_{L}=0$. Then $\delta\circ g=0$ for each $g:T(N)\rightarrow L$. In particular $\delta\circ f\circ\tau_N=0$ for each $f:N\rightarrow L$. Therefore $\delta\circ [\agot_{\tau_N}]_{L}=0$. We conclude that $\mathrm{Coker}([\agot_{\tau_N}]_{L})\leq \mathrm{Coker}([\agot_{1_{T(N)}}]_{L})$, and thus $\alpha_{\tau_N}\leq\alpha_{1_{T(N)}}$. Notice that, since $\tau$ is idempotent, $T(N)\in\mathbb{T}_{\tau}$. This holds for each $N\in\mathcal{A}$, so we conclude that $\tau=\bigvee_{M\in \mathcal{A}}\alpha_{\tau_{M}}\leq\bigvee_{M\in \mathbb{T}_{\tau}}\alpha_{1_{M}}$. Thus, we have the equality.\\

Conversely, if $\tau=\bigvee_{M\in \mathbb{T}_{\tau}}\alpha_{1_{M}}$ then, by Proposition \ref{alphaidemhiso} we have that $\alpha_{1_M}$ is idempotent for each $M\in\mathbb{T}_{\tau}$. So by Proposition \ref{supreidemp} we conclude that $\tau$ is idempotent. 
\end{proof}

The dual results for radicals can be stated using the duality assignment $\Delta$.
\vspace{.2in}

\begin{proposition}\label{suprerad}
Let $\mathcal{C}=\left\{\xymatrix{\sigma_{i}:S_{i}\ar@{^(->}[r] & 1_{\mathcal{A}}}\right\}_{i\in I}$ be a collection of radicals. Then  $\bigwedge_{i\in I}\sigma_{i}$ is a radical.
\end{proposition}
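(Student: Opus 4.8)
The plan is to obtain this statement as the exact dual of Proposition \ref{supreidemp}, transporting everything through the duality assignment $\Delta$ rather than reproving the join-closure argument in the opposite category by hand. The key observation is that $\Delta$ converts meets into joins and exchanges radicals with idempotent preradicals, so the three facts needed are already available in the excerpt: Corollary \ref{radidem}, Lemma \ref{deltasupinf}(b), and Proposition \ref{supreidemp}.

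Concretely, I would proceed as follows. First, since each $\sigma_i$ is a radical in $Pr(\mathcal{A})$, Corollary \ref{radidem}(a) gives that each $\Delta(\sigma_i)$ is idempotent in $Pr(\mathcal{A}^{op})$. Next, because $\mathcal{A}^{op}$ is again bicomplete and locally small (so that joins exist and Proposition \ref{supreidemp} applies there), the collection $\{\Delta(\sigma_i)\}_{i\in I}$ of idempotent preradicals has a join $\bigvee_{i\in I}\Delta(\sigma_i)$ which is itself idempotent by Proposition \ref{supreidemp}. Then, by Lemma \ref{deltasupinf}(b),
\[
\Delta\Big(\bigwedge_{i\in I}\sigma_{i}\Big)=\bigvee_{i\in I}\Delta(\sigma_i),
\]
so $\Delta\big(\bigwedge_{i\in I}\sigma_i\big)$ is idempotent in $Pr(\mathcal{A}^{op})$. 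Finally, applying Corollary \ref{radidem}(a) once more, this time to the preradical $\tau:=\bigwedge_{i\in I}\sigma_i\in Pr(\mathcal{A})$, the idempotence of $\Delta(\tau)$ is equivalent to $\tau$ being a radical, which yields the claim.

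The reasoning requires no new computation; the heavy lifting is done by Proposition \ref{supreidemp}, and the only points demanding a little care are bookkeeping ones: invoking Lemma \ref{deltasupinf}(b) in the correct direction (meets to joins), and checking that $\mathcal{A}^{op}$ inherits the hypotheses of bicompleteness and local smallness needed to form the join and to apply Proposition \ref{supreidemp} there. I do not expect a genuine obstacle, since these hypotheses are manifestly self-dual; the main thing to get right is the precise statement of Corollary \ref{radidem} so that the two applications of it are oriented consistently.
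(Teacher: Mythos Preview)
Your proof is correct and matches the paper's own argument essentially line for line: the paper also applies Corollary \ref{radidem} to get each $\Delta(\sigma_i)$ idempotent, uses Lemma \ref{deltasupinf} and Proposition \ref{supreidemp} to conclude that $\Delta\big(\bigwedge_{i}\sigma_i\big)=\bigvee_i\Delta(\sigma_i)$ is idempotent, and then invokes Corollary \ref{radidem} once more.
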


\begin{proof}
For such a collection of radicals in $Pr(\mathcal{A})$, by Corollary \ref{radidem} we have, for each $i\in I$, that $\Delta(\sigma_i)$ is an idempotent preradical in $Pr(\mathcal{A}^{op})$. By Lemma \ref{deltasupinf} and Proposition \ref{supreidemp},  $\Delta(\bigwedge_{i\in I}\sigma_{i})=\bigvee_{i\in I}\Delta(\sigma_{i})$ is an idempotent preradical in $Pr(\mathcal{A}^{op})$. Therefore, again by Corollary \ref{radidem}, we conclude that $\bigwedge_{i\in I}\sigma_{i}$ is a radical. 
\end{proof}

\begin{corollary}\label{complete}
Let $\mathcal{A}$ be an
abelian category which is bicomplete and locally small. Then $Rad(\mathcal{A})$ is a complete lattice.
\end{corollary}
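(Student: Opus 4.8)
The plan is to deduce the completeness of $Rad(\mathcal{A})$ from the already-established completeness of $Idem(\mathcal{A}^{op})$ by transporting the lattice structure across the duality assignment $\Delta$, exactly in the spirit in which Proposition \ref{suprerad} was deduced from Proposition \ref{supreidemp}. First I would observe that the opposite category $\mathcal{A}^{op}$ again satisfies the standing hypotheses: it is bicomplete because limits in $\mathcal{A}^{op}$ are colimits in $\mathcal{A}$ and conversely, and it is locally small because in an abelian category the subobjects and the quotient objects of a fixed object are in bijection (via $\mathrm{Coker}$ and $\mathrm{Ker}$), so well-poweredness of $\mathcal{A}$ forces co-well-poweredness, i.e. well-poweredness of $\mathcal{A}^{op}$. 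Hence Corollary \ref{idemcomplete}, applied to $\mathcal{A}^{op}$, yields that $Idem(\mathcal{A}^{op})$ is a complete lattice.

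Next I would invoke Corollary \ref{antisos}, which supplies the order anti-isomorphism $\Delta'': Rad(\mathcal{A}) \longrightarrow Idem(\mathcal{A}^{op})$, together with Lemma \ref{deltasupinf}. Given any collection $\{\sigma_i\}_{i\in I}$ of radicals of $\mathcal{A}$, the images $\{\Delta(\sigma_i)\}_{i\in I}$ lie in $Idem(\mathcal{A}^{op})$ and therefore admit both a join and a meet there. Applying the inverse assignment and using that $\Delta$ interchanges joins and meets, these produce, respectively, the meet and the join of $\{\sigma_i\}_{i\in I}$ inside $Rad(\mathcal{A})$. Since an order anti-isomorphism carries a complete lattice to a complete lattice, this shows that $Rad(\mathcal{A})$ is a complete lattice.

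Alternatively, and perhaps more directly, one can argue that $Rad(\mathcal{A})$ is a meet-closed subcollection of the complete lattice $Pr(\mathcal{A})$ (Corollary \ref{prcomplelatice}): by Proposition \ref{suprerad} the meet in $Pr(\mathcal{A})$ of any collection of radicals is again a radical, and the top preradical $1_{\mathcal{A}}$ is a radical, so $Rad(\mathcal{A})$ is a closure system in the sense of Section \ref{sec2}, whence the join in $Rad(\mathcal{A})$ of a collection is obtained as the meet of all radical upper bounds. I do not expect any serious obstacle in either route; the only point demanding a moment's care is the verification that the hypotheses \emph{bicomplete} and \emph{locally small} descend to $\mathcal{A}^{op}$ (for the duality route), or equivalently that $Rad(\mathcal{A})$ contains the top element and is closed under arbitrary meets, including the empty one (for the closure-system route).
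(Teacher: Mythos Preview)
Your proposal is correct. In the paper the corollary is left without proof (just \qed) immediately after Proposition \ref{suprerad}, so the intended argument is exactly your second, ``closure-system'' route: $Pr(\mathcal{A})$ is complete (Corollary \ref{prcomplelatice}), arbitrary meets of radicals are radicals (Proposition \ref{suprerad}), and the top preradical is a radical, hence $Rad(\mathcal{A})$ has all meets and is therefore a complete lattice. Your primary, duality-based route via $\Delta'': Rad(\mathcal{A})\to Idem(\mathcal{A}^{op})$ and Corollary \ref{idemcomplete} is equally valid and mirrors how Proposition \ref{suprerad} itself was proved; the only extra ingredient it requires is the observation that $\mathcal{A}^{op}$ is again bicomplete and locally small, which you justify correctly. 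Either argument suffices, and both are in the spirit of the paper.
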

\qed

Dually to Proposition \ref{supalfa}, every preradical is ``cogenerated" by omega preradicals.\\

\begin{proposition}
Let $\xymatrix{\tau:T\ar@{^(->}[r] & 1_{\mathcal{A}}}$ be any preradical of $\mathcal{A}$. For each $M\in\mathcal{A}$, let $\tau_{M}:T(M)\longrightarrow M$ be the corresponding morphism in $\mathcal{A}$. Then 
$\tau=\bigwedge_{M\in \mathcal{A}}\omega_{\tau^*_{M}}$.

\end{proposition}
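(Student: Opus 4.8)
The plan is to obtain this statement from its ``alpha'' counterpart, Proposition~\ref{supalfa}, by transporting that result through the duality assignment $\Delta$, in exactly the spirit in which Proposition~\ref{suprerad} was deduced from Proposition~\ref{supreidemp}.

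First I would compute $\Delta(\tau)$ componentwise. Since $\Delta(\tau)=(\tau^*)^{op}$ and the quotient map $\tau^*\colon 1_{\mathcal{A}}\to T^*$ has component $\tau^*_M\colon M\to T^*(M)$ at each object $M$, the preradical $\Delta(\tau)\in Pr(\mathcal{A}^{op})$ has component $\Delta(\tau)_M=(\tau^*_M)^{op}$ at $M$ (recall that $\mathcal{A}$ and $\mathcal{A}^{op}$ share the same objects). Applying Proposition~\ref{supalfa} inside the opposite category $\mathcal{A}^{op}$ to the preradical $\Delta(\tau)$ then gives
\[ \Delta(\tau)=\bigvee_{M\in\mathcal{A}^{op}}\alpha_{\Delta(\tau)_M}=\bigvee_{M\in\mathcal{A}}\alpha_{(\tau^*_M)^{op}}. \]
Next I would rewrite each alpha preradical on the right as an omega preradical of $\mathcal{A}$. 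By Proposition~\ref{omegaalfa}(b) applied to the morphism $k=\tau^*_M$ of $\mathcal{A}$ we have $\Delta(\omega_{\tau^*_M})=\alpha_{(\tau^*_M)^{op}}$, so that $\Delta(\tau)=\bigvee_{M\in\mathcal{A}}\Delta(\omega_{\tau^*_M})$. By Lemma~\ref{deltasupinf}(b) the right-hand side equals $\Delta\bigl(\bigwedge_{M\in\mathcal{A}}\omega_{\tau^*_M}\bigr)$, where the meet exists because $Pr(\mathcal{A})$ is a complete lattice (Corollary~\ref{prcomplelatice}). Since $\Delta$ is a bijection, I may cancel it and conclude $\tau=\bigwedge_{M\in\mathcal{A}}\omega_{\tau^*_M}$, as desired.

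The step needing the most care, and the one I expect to be the main obstacle, is the invocation of Proposition~\ref{supalfa} in $\mathcal{A}^{op}$, since that proposition was established under the running assumption that the ambient category is bicomplete and locally small. Bicompleteness is self-dual, so $\mathcal{A}^{op}$ inherits it and the alpha preradicals above are genuinely defined; moreover the displayed join makes sense because $\Delta$ carries the complete lattice $Pr(\mathcal{A})$ anti-isomorphically onto $Pr(\mathcal{A}^{op})$, so the latter is a complete lattice too. The point to pin down is therefore that the construction underlying Proposition~\ref{supalfa} remains valid over $\mathcal{A}^{op}$, i.e.\ that $\mathcal{A}^{op}$ is itself locally small (equivalently, that $\mathcal{A}$ is co-well-powered); once this is secured the argument is a chain of citations. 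Should one prefer to avoid any hypothesis on $\mathcal{A}^{op}$, the alternative is to mirror the proof of Proposition~\ref{supalfa} directly, replacing images, coproducts and the functor $S$ by kernels, products and the functor $P$ of Proposition~\ref{conspreinf}, and testing against arbitrary morphisms out of $C$ rather than into $C$; the duality route, however, is far shorter and recycles the work already carried out.
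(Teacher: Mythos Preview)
Your proposal is correct and follows essentially the same route as the paper's own proof: apply Proposition~\ref{supalfa} to $\Delta(\tau)$ in $\mathcal{A}^{op}$, then use Proposition~\ref{omegaalfa} and Lemma~\ref{deltasupinf} to translate the resulting join of alpha preradicals into a meet of omega preradicals, and finally cancel the bijection $\Delta$. The only cosmetic difference is that the paper applies the inverse assignment $\Delta'$ to both sides and invokes Proposition~\ref{omegaalfa}(a) in $\mathcal{A}^{op}$, whereas you keep $\Delta$ on both sides and invoke Proposition~\ref{omegaalfa}(b) in $\mathcal{A}$; these are equivalent bookkeeping choices. Your remark that invoking Proposition~\ref{supalfa} in $\mathcal{A}^{op}$ tacitly requires $\mathcal{A}^{op}$ to be locally small is a genuine subtlety that the paper passes over in silence.
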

\begin{proof}
For such a preradical $\tau$ in $Pr(\mathcal{A})$, considering the duality assignment $\Delta$ related to $\mathcal{A}$, and by Proposition \ref{supalfa}, we have $\Delta(\tau)=\bigvee_{M\in \mathcal{A}^{op}}\alpha_{\Delta(\tau)_{M}}$, in $Pr(\mathcal{A}^{op})$. Therefore, considering the inverse duality assignment $\Delta'$ we have, by Lemma \ref{deltasupinf},
$$\tau=\Delta'(\bigvee_{M\in \mathcal{A}^{op}}\alpha_{\Delta(\tau)_{M}})=\bigwedge_{M\in \mathcal{A}}\Delta'(\alpha_{\Delta(\tau)_{M}})$$
Notice that, for each $M\in\mathcal{A}$, $(\Delta(\tau)_M)^{op}=((\tau^*)^{op}_M)^{op}=\tau^*_M$. On the other hand, by Proposition \ref{omegaalfa} we have $\Delta'(\alpha_{\Delta(\tau)_{M}})=\omega_{(\Delta(\tau)_{M})^{op}}=\omega_{\tau^*_M}$. We conclude that $\tau=\bigwedge_{M\in \mathcal{A}}\omega_{\tau^*_{M}}$. 
\end{proof}

Dually to Proposition \ref{idemsupalfa}, we consider the class of  objects:\\

\centerline{$\mathbb{F}_{\tau}:=\{M\in \mathcal{A}\mid \tau_M=0\}$}
\vspace{5pt}

\begin{proposition}\label{imfimarad}
Let $\xymatrix{\tau: T\ar@{^(->}[r] & 1_{\mathcal{A}}}$
be any preradical in $\mathcal{A}$. Then $\tau$ is a radical if, and only if, $\tau=\bigwedge_{M\in \mathbb{F}_{\tau}}\omega_{1_{M}}$.
\end{proposition}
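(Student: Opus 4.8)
The plan is to obtain this statement as the exact dual of Proposition~\ref{idemsupalfa}, using the duality assignment $\Delta$, just as the preceding proposition was derived. The skeleton is a chain of equivalences. By Corollary~\ref{radidem}(a), $\tau$ is a radical in $Pr(\mathcal{A})$ if, and only if, $\Delta(\tau)$ is idempotent in $Pr(\mathcal{A}^{op})$. Applying Proposition~\ref{idemsupalfa} in the abelian category $\mathcal{A}^{op}$ to the preradical $\Delta(\tau)$, this holds if, and only if, $\Delta(\tau)=\bigvee_{M\in\mathbb{T}_{\Delta(\tau)}}\alpha_{1_M}$, where the join is computed in $Pr(\mathcal{A}^{op})$ and each $\alpha_{1_M}$ is the alpha preradical in $\mathcal{A}^{op}$ associated to the identity of $M$.

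The key step is to identify the class $\mathbb{T}_{\Delta(\tau)}$ (computed for $\Delta(\tau)$ in $\mathcal{A}^{op}$) with $\mathbb{F}_{\tau}$. Recall that $\Delta(\tau)=(\tau^*)^{op}$, so its underlying functor is $T^{*op}$ and its component at $M$ is $(\tau^*_M)^{op}\colon T^*(M)\to M$ in $\mathcal{A}^{op}$, namely the morphism $\tau^*_M\colon M\to T^*(M)$ of $\mathcal{A}$ read backwards. Since the duality preserves isomorphisms, $M\in\mathbb{T}_{\Delta(\tau)}$ precisely when $\tau^*_M$ is an isomorphism in $\mathcal{A}$. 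Now consider the SQ-sequence $\xymatrix{T(M)\ar@{^(->}[r]^{\tau_M} & M\ar@{->>}[r]^{\tau^*_M} & T^*(M)}$, in which $\tau^*_M=\mathrm{Coker}(\tau_M)$ and $\tau_M=\mathrm{Ker}(\tau^*_M)$. The epimorphism $\tau^*_M$ is an isomorphism if, and only if, $T(M)=\mathrm{Ker}(\tau^*_M)=0$, which, since $\tau_M$ is a monomorphism, is equivalent to $\tau_M=0$, i.e. to $M\in\mathbb{F}_{\tau}$. Hence $\mathbb{T}_{\Delta(\tau)}=\mathbb{F}_{\tau}$.

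It remains to transport the identity back to $\mathcal{A}$ via the inverse assignment $\Delta'=\Delta_{\mathcal{A}^{op}}\colon Pr(\mathcal{A}^{op})\to Pr(\mathcal{A})$. Because $\Delta'$ is a bijection, $\Delta(\tau)=\bigvee_{M\in\mathbb{F}_{\tau}}\alpha_{1_M}$ if, and only if, $\tau=\Delta'\big(\bigvee_{M\in\mathbb{F}_{\tau}}\alpha_{1_M}\big)$. By the anti-isomorphism property of Lemma~\ref{deltasupinf} (applied to $\Delta'$) this right-hand side equals $\bigwedge_{M\in\mathbb{F}_{\tau}}\Delta'(\alpha_{1_M})$, and by Proposition~\ref{omegaalfa} together with $(1_M)^{op}=1_M$ we get $\Delta'(\alpha_{1_M})=\omega_{1_M}$ in $\mathcal{A}$. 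Chaining all these equivalences yields that $\tau$ is a radical if, and only if, $\tau=\bigwedge_{M\in\mathbb{F}_{\tau}}\omega_{1_M}$. As a sanity check on the converse direction, one may note directly that, since each $1_M$ is an isomorphism, every $\omega_{1_M}$ is a radical by Proposition~\ref{omegaradical}, and an arbitrary meet of radicals is a radical by Proposition~\ref{suprerad}, so the displayed meet is automatically a radical.

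I expect the only delicate point to be the duality bookkeeping in the second paragraph: verifying that the condition ``$M\in\mathbb{T}$'' for $\Delta(\tau)$ (an isomorphism cokernel component) dualizes exactly to the condition ``$M\in\mathbb{F}$'' for $\tau$ (a zero component), and keeping track of which operations are joins and which are meets when passing between $\mathcal{A}$ and $\mathcal{A}^{op}$. Everything else is a formal invocation of the already-established Corollary~\ref{radidem}, Proposition~\ref{idemsupalfa}, Lemma~\ref{deltasupinf} and Proposition~\ref{omegaalfa}.
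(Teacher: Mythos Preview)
Your proof is correct and follows essentially the same route as the paper: apply Corollary~\ref{radidem} and Proposition~\ref{idemsupalfa} to $\Delta(\tau)$ in $\mathcal{A}^{op}$, identify $\mathbb{T}_{\Delta(\tau)}=\mathbb{F}_{\tau}$ via the SQ-sequence, and transport back using Lemma~\ref{deltasupinf} and Proposition~\ref{omegaalfa}. Your additional sanity check via Propositions~\ref{omegaradical} and~\ref{suprerad} is not in the paper and is redundant (the chain of equivalences already handles both directions), but it does no harm.
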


\begin{proof}
Let $\tau$ be such a preradical, and again consider the duality assignment $\Delta$. By Corollary \ref{radidem} and Proposition \ref{idemsupalfa} we have that $\tau$ is a radical if, and only if,  $\Delta(\tau)$ is idempotent if, and only if, $\Delta(\tau)=\bigvee_{M\in \mathbb{T}_{\Delta(\tau)}}\alpha_{1_{M}}$ in $Pr(\mathcal{A}^{op})$. Notice that for each $M\in\mathcal{A}^{op}$, we have that $M\in\mathbb{T}_{\Delta(\tau)}$ if, and only if, $\Delta(\tau)_M$ is an isomorphism in $\mathcal{A}^{op}$ if, and only if, $\tau^*_M$ is an isomorphism in $\mathcal{A}$ if, and only if, $\tau_M=0$ if, and only if $M\in\mathbb{F}_{\tau}$ as an object in $\mathcal{A}$. Therefore, considering the inverse duality assignment $\Delta'$ we have, by Lemma \ref{deltasupinf}, that $\tau$ is a radical if, and only if:
$$\tau=\Delta'(\bigvee_{M\in \mathbb{T}_{\Delta(\tau)}}\alpha_{1_{M}})=\bigwedge_{M\in \mathbb{F}_{\tau}}\Delta'(\alpha_{1_{M}})=\bigwedge_{M\in \mathbb{F}_{\tau}}\omega_{1_{M}}$$
The last equality holds by Proposition \ref{omegaalfa}. 
\end{proof}

Now we are ready to study the way that any Galois connection induced by an adjoint pair between abelian categories preserves idempotent preradicals and radicals.
\vspace{.2in}

\begin{theorem}\label{galradidem}
Let $(F,G):\mathcal{A}\longrightarrow \mathcal{B}$ be an adjoint pair between locally small and bicomplete abelian categories and let $(\varphi,\psi):Pr(\mathcal{A})\longrightarrow Pr(\mathcal{B})$ be the corresponding induced Galois connection. Let $\tau\in Pr(\mathcal{A})$ and $\sigma\in Pr(\mathcal{B})$.
\begin{enumerate}
    \item If $\tau$ is idempotent then $\varphi(\tau)$ is a idempotent.
    \item If $\sigma$ is a radical then $\psi(\sigma)$ is a radical.
\end{enumerate}
\end{theorem}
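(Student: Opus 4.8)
The plan is to reduce each statement to the description of idempotent preradicals and radicals in terms of alpha and omega preradicals (Propositions \ref{idemsupalfa} and \ref{imfimarad}), and then to exploit that in any Galois connection the coadjoint part preserves arbitrary joins while the adjoint part preserves arbitrary meets. Since both $\mathcal{A}$ and $\mathcal{B}$ are bicomplete and locally small, Corollary \ref{prcomplelatice} ensures that $Pr(\mathcal{A})$ and $Pr(\mathcal{B})$ are complete lattices, so property (5) of Galois connections from Section \ref{sec2} applies to $(\varphi,\psi)$, with $\varphi$ in the role of the coadjoint part and $\psi$ in the role of the adjoint part. This is precisely what makes the two halves of the theorem go through.

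For part (1), I would assume $\tau$ idempotent and first invoke Proposition \ref{idemsupalfa} to write $\tau=\bigvee_{M\in\mathbb{T}_\tau}\alpha_{1_M}$. Applying $\varphi$ and using that it preserves joins gives $\varphi(\tau)=\bigvee_{M\in\mathbb{T}_\tau}\varphi(\alpha_{1_M})$. Next, Proposition \ref{varphialpha=alpha} evaluates each summand as $\varphi(\alpha_{1_M})=\alpha_{F(1_M)}=\alpha_{1_{F(M)}}$, since $F$ is a functor and hence $F(1_M)=1_{F(M)}$. As $1_{F(M)}$ is an isomorphism, Proposition \ref{alphaidemhiso} shows each $\alpha_{1_{F(M)}}$ is idempotent, and Proposition \ref{supreidemp} (idempotent preradicals are closed under arbitrary joins) then forces $\varphi(\tau)=\bigvee_{M\in\mathbb{T}_\tau}\alpha_{1_{F(M)}}$ to be idempotent.

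Part (2) runs dually. Assuming $\sigma$ a radical, Proposition \ref{imfimarad} yields $\sigma=\bigwedge_{M\in\mathbb{F}_\sigma}\omega_{1_M}$. Since $\psi$ preserves meets, $\psi(\sigma)=\bigwedge_{M\in\mathbb{F}_\sigma}\psi(\omega_{1_M})$, and Proposition \ref{psiomega} identifies each factor as $\psi(\omega_{1_M})=\omega_{G(1_M)}=\omega_{1_{G(M)}}$. Each $\omega_{1_{G(M)}}$ is a radical by Proposition \ref{omegaradical}, and Proposition \ref{suprerad} (radicals are closed under arbitrary meets) then gives that $\psi(\sigma)$ is a radical.

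Because the structural content has already been absorbed into the earlier propositions, no serious obstacle remains at this stage; the proof is essentially an assembly of known pieces. The one point demanding genuine care is verifying that property (5) is legitimately available, which rests on $Pr(\mathcal{A})$ and $Pr(\mathcal{B})$ being complete lattices (Corollary \ref{prcomplelatice}) and on correctly identifying $\varphi$ and $\psi$ as the coadjoint and adjoint parts. I would also briefly check the degenerate cases where $\mathbb{T}_\tau$ or $\mathbb{F}_\sigma$ is empty: the empty join is the least preradical and the empty meet is the greatest, which are trivially idempotent and a radical, respectively, so the argument still closes.
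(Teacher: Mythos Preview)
Your proof is correct and follows essentially the same route as the paper's own argument: express $\tau$ (respectively $\sigma$) via Proposition \ref{idemsupalfa} (respectively \ref{imfimarad}) as a join of $\alpha_{1_M}$'s (respectively a meet of $\omega_{1_M}$'s), push $\varphi$ (respectively $\psi$) through using join (respectively meet) preservation and Proposition \ref{varphialpha=alpha} (respectively \ref{psiomega}), and conclude with Propositions \ref{alphaidemhiso} and \ref{supreidemp} (respectively \ref{omegaradical} and \ref{suprerad}). Your added remarks on the completeness hypothesis and the empty-index case are sound refinements but do not alter the strategy.
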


\begin{proof}
\begin{enumerate}
\item Let $\tau\in Pr(\mathcal{A})$ and suppose thet $\tau$ is idempotent. By Proposition \ref{idemsupalfa} we have $\tau=\bigvee_{M\in \mathbb{T}_{\tau}}\alpha_{1_M}$. Since  $(\varphi,\psi):Pr(\mathcal{A})\longrightarrow Pr(\mathcal{B})$ is a Galois connection, $\varphi$ preserves arbitrary joins, and thus:

{\vspace{-10pt}
\begin{align*} 
\varphi(\tau)=\varphi\Big(\bigvee_{M\in \mathbb{T}_{\tau}}\alpha_{1_M}\Big)=\bigvee_{M\in \mathbb{T}_{\tau}}\varphi(\alpha_{1_M})=\bigvee_{M\in \mathbb{T}_{\tau}}\alpha_{1_{F(M)}}
\end{align*} \vspace{-8pt}}

\noindent where the last equality holds by  Proposition \ref{varphialpha=alpha}. By Proposition \ref{alphaidemhiso} and Proposition \ref{supreidemp}, we conclude that $\varphi(\tau)$ is an idempotent preradical. \\

\item Dually, let $\sigma\in Pr(\mathcal{B})$ and suppose thet $\sigma$ is a radical. By Proposition \ref{imfimarad} we have $\sigma=\bigwedge_{M\in \mathbb{F}_{\sigma}}\omega_{1_M}$. Being  $(\varphi,\psi)$ a Galois connection, $\psi$ preserves arbitrary meets, and thus:

{\vspace{-10pt}
\begin{align*} 
\psi(\sigma)=\psi\Big(\bigwedge_{M\in \mathbb{F}_{\sigma}}\omega_{1_M}\Big)=\bigwedge_{M\in \mathbb{F}_{\sigma}}\psi(\omega_{1_M})=\bigwedge_{M\in \mathbb{F}_{\sigma}}\omega_{1_{G(M)}}
\end{align*} \vspace{-8pt}}

\noindent where the last equality holds by  Proposition \ref{psiomega}. By Proposition \ref{omegaradical} and Proposition \ref{suprerad}, we conclude that $\psi(\sigma)$ is a radical. 
\end{enumerate}
 
\end{proof}

\section{Examples}

\begin{example}
Let $K$ be any field and let $\Lambda=K\mathbb{A}_2$ be the path algebra of the quiver $\mathbb{A}_2:\xymatrix{e_1\ar[r] & e_2}$. Then
$\Lambda\cong\begin{pmatrix}
 K & 0\\
 K & K
\end{pmatrix}$. 

There are three non-isomorphic indecomposable modules, which are elements of the set $\Lambda\mathrm{-ind}=\{ S_1,P,S_2\}$, where $S_1=\Lambda e_1$ is the projective simple module, $P=\Lambda e_2$ and $S_2=P/rP$ is the injective simple module, where $rP$ is the Jacobson radical of $P$. The Auslander-Reiten diagram of $\Lambda$ is:
$$\xymatrix{& [P]\ar[dr] & \\
[S_{1}]\ar[ur] & & [S_{2}]} $$

Consider $\Lambda\mathrm{-Mod}$, the category of left $\Lambda$-modules, and $\Lambda\mathrm{-mod}$, the full subcategory of finitely generated left $\Lambda$-modules, which of course contains $\Lambda\mathrm{-ind}$. Any preradical in $\Lambda$ is determined uniquely by its value on each of the elements of $\Lambda\mathrm{-ind}$. In fact, each preradical can be denoted as $\sigma=[N_2,N,N_1]$, if $\sigma(S_2)=N_2$, $\sigma(P)=N$ and $\sigma(S_1)=N_1$. In \cite[Example 4.13]{FernandezMartelo} all eight preradicals in $\Lambda$ are described as follows:
\begin{align*}
    0_A &=[0,0,0],\  1_A=[S_1,P,S_2],\\
    \alpha^{S_1}_{S_1} &=\omega^{S_2}_0=[0,S_1,S_1],\\
    \iota_0 &=[S_2,S_1,S_1],\  \rho_1=[0,S_1,0]\\
    \gamma_0&=[S_2,0,0], \gamma_1=[S_2,P,0],\ \xi=[S_2,S_1,0]
\end{align*}

The corresponding lattice is showed in the upper left part of Figure \ref{Anti}.\\
Similarly to the proof of Proposition \ref{preradiso}, we can prove that all preradicals of the category $\Lambda\mathrm{-mod}$ are in one-to-one correspondence with $\Lambda\mathrm{-pr}$. In other words, 
$$Pr(\Lambda\mathrm{-mod})\cong \Lambda\mathrm{-pr}\cong Pr(\Lambda\mathrm{-Mod})$$

On the other hand, if $\mathbb{A}_2^{op}:\xymatrix{e_1 & e_2\ar[l]}$ is the opposite quiver, then the opposite path algebra is $\Lambda^{op}=K\mathbb{A}_2^{op}$. See \cite[III.1]{AusBook}. We have an equivalence of categories:
$$(\Lambda\mathrm{-mod})^{op}\simeq \Lambda^{op}\mathrm{-mod}$$
which induces, by Corollary \ref{equiviso}, an isomorphism:  
$$Pr((\Lambda\mathrm{-mod})^{op})\cong Pr(\Lambda^{op}\mathrm{-mod})$$

We have also the duality assignment, which is an anti-isomorphism:
$$\Delta:Pr(\Lambda\mathrm{-Mod})\longrightarrow Pr((\Lambda\mathrm{-Mod})^{op})$$

\begin{figure}[t]

\[ \scalebox{.8}{
\begin{tikzpicture}
%\begin{scope}
%\fill[color=yellow] (-3,-3) rectangle (1.5,3);
%\end{scope}

%\draw[step=0.5cm,color=blue, dotted]

\node (A1) at (1,1) {$\varheartsuit$};
\node (A2) at (0,2) {$\bullet$};
\node (A3) at (2,2) {$\varheartsuit$};
\node (A4) at (1,3) {$\bullet$};
\node (A5) at (-1,3) {$\varheartsuit$};
\node (A6) at (0,4) {$\varheartsuit$};
\node (A7) at (2,4) {$\varheartsuit$};
\node (A8) at (1,5) {$\varheartsuit$};

\node (B1) at (1,0.8) {${0}$};
\node (B2) at (0,1.8) {$\rho_{1}$};
\node (B3) at (2,1.8) {$\gamma_{0}$};
\node (B4) at (1,2.8) {$\xi$};
\node (B5) at (-1,2.6) {$\omega_{0}^{S_{2}}$};
\node (B6) at (0,3.8) {$\iota_{0}$};
\node (B7) at (2,3.8) {$\gamma_{1}$};
\node (B8) at (1,4.8) {$1$};

\draw [-, thick] (1,1) -- (-1,3);
\draw [-, thick] (2,2) -- (0,4);
\draw [-, thick] (2,4) -- (1,5);
%%%
\draw [-, thick] (-1,3) -- (1,5);
\draw [-, thick] (0,2) -- (2,4);
\draw [-, thick] (1,1) -- (2,2);

%\draw [-,dotted, thick] (-2,0) -- (13,0);
\draw [->, thick] (0.5,0.5) -- (0.5,-0.8);
\draw [->, thick] (8.5,0.5) -- (8.5,-0.8);
\node (Z1) at (1,0) {$\Delta$};
\node (Z2) at (9,0) {$\Delta'$};

%%%%%%%%%%%%%%%%%%%%%%%%%%%%%%%
\node (M1) at (9,1) {$\varheartsuit$};
\node (M2) at (10,2) {$\varheartsuit$};
\node (M3) at (8,2) {$\varheartsuit$};
\node (M4) at (9,3) {$\varheartsuit$};
\node (M5) at (11,3) {$\varheartsuit$};
\node (M6) at (10,4) {$\varheartsuit$};

\node (N1) at (9,0.8) {$0$};
\node (N2) at (10,1.8) {$\gamma_{0}$};
\node (N3) at (8,1.6) {$\omega_{0}^{S_{2}}$};
\node (N4) at (9,2.8) {$\iota_{0}$};
\node (N5) at (11,2.8) {$\gamma_{1}$};
\node (N6) at (10,3.8) {$1$};

\draw [-, thick] (9,1) -- (8,2);
\draw [-, thick] (9,1) -- (11,3);
\draw [-, thick] (8,2) -- (10,4);
\draw [-, thick] (10,4) -- (11,3);
\draw [-, thick] (9,3) -- (10,2);

%\node (X8) at (2,0.5) {$\text{reflexion}$};

%%%%%%
%%%
%%%%%
%%%%

\node (C1) at (1,-5) {$\spadesuit$};
\node (C2) at (0,-4) {$\spadesuit$};
\node (C3) at (2,-4) {$\spadesuit$};
\node (C4) at (1,-3) {$\bullet$};
\node (C5) at (-1,-3) {$\spadesuit$};
\node (C6) at (0,-2) {$\bullet$};
\node (C7) at (2,-2) {$\spadesuit$};
\node (C8) at (1,-1) {$\spadesuit$};

\node (D1) at (1,-5.3) {${\underline{0}}$};
\node (D2) at (0,-4.3) {$\underline{\rho_{1}}$};
\node (D3) at (2,-4.3) {$\underline{\gamma_{0}}$};
\node (D4) at (1,-3.3) {$\underline{\xi}$};
\node (D5) at (-1,-3.4) {$\underline{\omega_{0}^{S_{2}}}$};
\node (D6) at (0,-2.3) {$\underline{\iota_{0}}$};
\node (D7) at (2,-2.3) {$\underline{\gamma_{1}}$};
\node (D8) at (1,-1.3) {$\underline{1}$};

\draw [-, thick] (1,-5) -- (-1,-3);
\draw [-, thick] (2,-4) -- (0,-2);
\draw [-, thick] (2,-2) -- (1,-1);
%%%
\draw [-, thick] (-1,-3) -- (1,-1);
\draw [-, thick] (0,-4) -- (2,-2);
\draw [-, thick] (1,-5) -- (2,-4);

\node (P1) at (9,-1) {$\spadesuit$};
\node (P2) at (10,-2) {$\spadesuit$};
\node (P3) at (8,-2) {$\spadesuit$};
\node (P4) at (9,-3) {$\spadesuit$};
\node (P5) at (11,-3) {$\spadesuit$};
\node (P6) at (10,-4) {$\spadesuit$};

\node (Q1) at (9,-1.3) {$\underline{1}$};
\node (Q2) at (10,-2.3) {$\underline{\gamma_{1}}$};
\node (Q3) at (8,-2.4) {$\underline{\omega_{0}^{S_{2}}}$};
\node (Q4) at (9,-3.3) {$\underline{\rho_{1}}$};
\node (Q5) at (11,-3.3) {$\underline{\gamma_{0}}$};
\node (Q6) at (10,-4.3) {$\underline{0}$};

\draw [-, thick] (9,-1) -- (8,-2);
\draw [-, thick] (9,-1) -- (11,-3);
\draw [-, thick] (8,-2) -- (10,-4);
\draw [-, thick] (10,-4) -- (11,-3);
\draw [-, thick] (9,-3) -- (10,-2);

\node (F1) at (4,4) {$Pr(\Lambda\mathrm{-Mod})$};
\node (F2) at (4,-5) {$Pr((\Lambda\mathrm{-Mod})^{op})$};

\node (S1) at (12,1.5) {$Idem(\Lambda\mathrm{-Mod})$};

\node (S2) at (12,-2) {$Rad((\Lambda\mathrm{-Mod})^{op})$};

\draw [left hook->, thick] (7,3) -- (3,3);

%\node (W1) at (8,3) {$^{\supset}$};

\draw [left hook->, thick] (7,-3) -- (3,-3);

\end{tikzpicture}

}\]
\caption{\label{Anti} $K\mathbb{A}_2\mathrm{-pr}$ and $Pr((K\mathbb{A}_2\mathrm{-mod})^{op})$
} 
\end{figure}

And which restricted to the set of idempotent preradicals in $Pr(\Lambda-Mod)$ induces an anti-isomorphism onto the set of radicals in $Pr((\Lambda-Mod)^{op})$:
$$\Delta':Idem(\Lambda\mathrm{-Mod})\longrightarrow Rad((\Lambda\mathrm{-Mod})^{op})$$
\end{example}

\begin{example}

Let $\mathcal{A}$ be the category of torsion abelian groups.
It is well known that $\mathcal{A}$ is a complete and cocomplete abelian category (see \cite[Exercise 4, p. 91]{MitBook}). Moreover, $\mathcal{A}$ has no nonzero projective objects and hence it is not equivalent to any complete category of modules.\\

Let $\mathbb{P}$ be the set of all prime numbers. Following \cite{Dickson}, a \textbf{restricted Steinitz number} is a formal symbol $\prod_{p\in \mathbb{P}} p^{e(p)}$ where $e(p)$ is $0$, $1$ or $\infty$. A class of abelian groups $\mathcal{T}$ is a \textbf{torsion class} if it is closed under epimorphisms, arbitrary coproducts and extensions. Let $\mathrm{Tors}(\mathcal{A})$ be the collection of all torsion classes contained in $\mathcal{A}$. Let $\mathbb{STE}$ be the set of all restricted Steinitz numbers. Dickson proves (\cite[Theorem 2.6]{Dickson}) that there is a one to one correspondence:
$$\Theta:\mathbb{STE}\longrightarrow \mathrm{Tors}(\mathcal{A})$$

Since $\mathrm{Tors}(\mathcal{A})$ is a partial order by inclusion of classes and, furthermore, it is a complete lattice, then $\Theta$ induces on $\mathbb{STE}$ a partial order which is also a complete lattice. Thus, $\Theta$ is a lattice isomorphism. The induced partial order in $\mathbb{STE}$ is described as follows:
$$\prod_{p\in \mathbb{P}} p^{e(p)}\leq \prod_{p\in \mathbb{P}} p^{e'(p)}\Longleftrightarrow\text{ for each } p\in\mathbb{P},\ e(p)\preceq e'(p)$$
\noindent where $\preceq$ is the linear order on $\{0,\infty, 1\}$ such that $0\preceq\infty\preceq 1$. Therefore, if $\mathbf{C_3}$ is the chain with three elements, then $\mathbb{STE}$ is isomorphic as a lattice to the direct product $\mathbf{C_3}^{\omega}$. Notice that this lattice is autodual.

We have also a bijective correspondence (see \cite[Chapter VI, section 2]{Stentrom}):
$$\Gamma: \mathrm{Tors}(\mathcal{A})\longrightarrow Radidem(\mathcal{A})$$
\noindent which is also a lattice isomorphism. Notice that, in particular, we conclude that both $\mathrm{Tors}(\mathcal{A})$ and $Radidem(\mathcal{A})$ are sets.\\

On the other hand, we have the Pontryagin duality (see for example \cite{Armacost})
$$D=\mathrm{Hom}_{\mathbf{Ab}}(-,\mathbb{R}/\mathbb{Z}): \mathrm{LCA}\longrightarrow \mathrm{LCA}^{op}$$
where $\mathrm{LCA}$ denotes the category of locally compact abelian groups (with continuous morphisms). This duality induces by restriction the duality:

$$D'=\mathrm{Hom}_{\mathbf{Ab}}(-,\mathbb{R}/\mathbb{Z}): \mathcal{A}\longrightarrow \mathrm{Profin}$$
where $\mathrm{Profin}$ is the category of abelian profinite groups (see \cite[Theorem 2.9.6, p.64]{Ribes}). Recall that the category of abelian profinite groups is equivalent to the the category of abelian totally disconnected groups (see \cite[Proposition 0, example 4, p.3]{Serre}). Therefore we have the equivalence $\mathcal{A}^{op}\simeq \mathrm{Profin}$.
By Corollary \ref{antisos}, there is an anti-isomorphism $\Delta''':Radidem(\mathcal{A})\longrightarrow Radidem(\mathcal{A}^{op})$. Since $\mathbf{C_3}^{\omega}$ is an autodual lattice, then we have lattice isomorphisms:
$$Radidem(\mathcal{A})\cong Radidem(\mathcal{A}^{op})\cong Radidem(\mathrm{Profin}) \cong\mathbf{C_3}^{\omega}$$

\end{example}
\vspace{.5in}
%%%%%%%%%%%%%%%%%%%%%

\textbf{Funding Declaration:} The authors would like to thank the SECIHTI Project CBF-2023-2024-2630, and the Research Support Office of the Metropolitan Autonomous University (DAI UAM) for the support granted.

%\textbf{Competing Interest Declaration:} There are no competing interests.

%\textbf{Availability of Data and Materials Declaration:} This manuscript does not generate any data or materials, so that there is no such availability.

\textbf{Author Contribution Statement:} All authors, Rogelio Fern\'andez-Alonso, Janeth Maga\~na, Martha Lizbeth Shaid Sandoval-Miranda and Valente Santiago-Vargas, contributed equally to this work.

\vspace{.5in}

\bibliographystyle{sn-chicago}
\bibliography{GLFC}% common bib file

\begin{thebibliography}{}
\providecommand{\doi}[1]{\url{https://doi.org/#1}}
\bibcommenthead

\bibitem[\protect\citeauthoryear{Armacost}{Armacost}{1981}]{Armacost}
Armacost, D. 1981.
\newblock {\em The Structure of Locally Compact Abelian Groups}.
\newblock Monographs and textbooks in pure and applied mathematics. M. Dekker.

\bibitem[\protect\citeauthoryear{Auslander, Reiten, and Smalo}{Auslander
  et~al.}{1995}]{AusBook}
Auslander, M., I.~Reiten, and S.O. Smalo. 1995, jan.
\newblock {\em Representation Theory of Artin Algebras}.
\newblock Cambridge University Press.

\bibitem[\protect\citeauthoryear{Bican, Jambor, Kepka, and Němec}{Bican
  et~al.}{1975}]{BJKN}
Bican, L., P.~Jambor, T.~Kepka, and P.~Němec. 1975.
\newblock Preradicals and change of rings.
\newblock {\em Commentationes Mathematicae Universitatis Carolinae\/}~{\em
  016\/}(2): 201--217 .

\bibitem[\protect\citeauthoryear{Bican, Kepka, and Němec}{Bican
  et~al.}{1982}]{BKN}
Bican, L., T.~Kepka, and P.~Němec. 1982.
\newblock {\em Rings, Modules and Preradicals}.
\newblock Marcel Dekker, New York and Base.

\bibitem[\protect\citeauthoryear{Dickson}{Dickson}{1965}]{Dickson}
Dickson, S.E. 1965, jan.
\newblock On torsion classes of abelian groups.
\newblock {\em Journal of the Mathematical Society of Japan\/}~{\em 17\/}(1).
\newblock \doi{10.2969/jmsj/01710030} .

\bibitem[\protect\citeauthoryear{Ern{\'e}, Koslowski, Melton, and
  Strecker}{Ern{\'e} et~al.}{1993}]{Erne}
Ern{\'e}, M., J.~Koslowski, A.~Melton, and G.E. Strecker. 1993.
\newblock A primer on galois connections.
\newblock {\em Annals of the New York Academy of Sciences\/}~704 .

\bibitem[\protect\citeauthoryear{Fern{\'{a}}ndez-Alonso and
  Maga{\~{n}}a}{Fern{\'{a}}ndez-Alonso and
  Maga{\~{n}}a}{2015}]{FernandezMagana2015}
Fern{\'{a}}ndez-Alonso, R. and J.~Maga{\~{n}}a. 2015, apr.
\newblock Galois connections between lattices of preradicals induced by adjoint
  pairs between categories of modules.
\newblock {\em Applied Categorical Structures\/}~{\em 24\/}(3): 241--268.
\newblock \doi{10.1007/s10485-015-9395-x} .

\bibitem[\protect\citeauthoryear{Fern{\'{a}}ndez-Alonso and
  Martelo}{Fern{\'{a}}ndez-Alonso and Martelo}{2021}]{FernandezMartelo}
Fern{\'{a}}ndez-Alonso, R. and E.S. Martelo. 2021, mar.
\newblock Preradicals over left pure semisimple hereditary rings.
\newblock {\em Communications in Algebra\/}~{\em 49\/}(7): 3145--3160.
\newblock \doi{10.1080/00927872.2021.1888963} .

\bibitem[\protect\citeauthoryear{Fern{\'{a}}ndez-Alonso, Raggi, Rincón, Ríos,
  and Signoret}{Fern{\'{a}}ndez-Alonso et~al.}{2002}]{FernandezRaggi}
Fern{\'{a}}ndez-Alonso, R., F.~Raggi, H.~Rincón, J.~Ríos, and C.~Signoret.
  2002.
\newblock The lattice structure of preradicals.
\newblock {\em Communications in Algebra\/}~{\em 30\/}(3): 1533--1544.
\newblock \doi{10.1080/00927870209342395} .

\bibitem[\protect\citeauthoryear{MacLane}{MacLane}{1978}]{MacLane1978}
MacLane, S. 1978.
\newblock {\em Categories for the Working Mathematician}.
\newblock Springer New York.

\bibitem[\protect\citeauthoryear{Mitchell}{Mitchell}{1965}]{MitBook}
Mitchell, B. 1965.
\newblock {\em Theory of Categories}.
\newblock Pure and Applied Mathematics - Academic Press. Academic Press.

\bibitem[\protect\citeauthoryear{Ribes and Zalesskii}{Ribes and
  Zalesskii}{2010}]{Ribes}
Ribes, L. and P.~Zalesskii. 2010.
\newblock Profinite groups, {\em Profinite Groups},  19--74. Springer Berlin
  Heidelberg.
\newblock \doi{10.1007/978-3-642-01642-4_2}.

\bibitem[\protect\citeauthoryear{Serre}{Serre}{1997}]{Serre}
Serre, J.P. 1997.
\newblock {\em Galois Cohomology}.
\newblock Springer Berlin Heidelberg.

\bibitem[\protect\citeauthoryear{Stenstr{\"o}m}{Stenstr{\"o}m}{1975}]{Stentrom}
Stenstr{\"o}m, B. 1975.
\newblock {\em Rings of Quotients: an introduction to methods of ring theory}.
\newblock Grundlehren der mathematischen Wissenschaften. Springer-Verlag.

\end{thebibliography}
%% if required, the content of .bbl file can be included here once bbl is generated

\end{document}